\documentclass[smallextended,envcountsame]{svjour3}
\bibliographystyle{plain}

\usepackage{trsty-springer}
\smartqed

\newcommand{\kry}{g}  
\newcommand{\dm}{n}  

\newcommand{\Rdd}{\mathbb{R}^{\dm\times \dm}}
\newcommand{\Sstat}{\mathrm{S}}
\newcommand{\Zstat}{\mathrm{Z}}

\title{Statistical Properties of the Probabilistic Numeric\\ Linear Solver BayesCG}
\author{Tim W. Reid, Ilse C. F. Ipsen,\\ Jon Cockayne, and Chris J. Oates \thanks{The work was supported in part by NSF grant DMS-1745654 
(TWR, ICFI), NSF grant DMS-1760374 and
DOE grant DE-SC0022085
(ICFI), and the Lloyd's Register Foundation Programme on Data Centric
Engineering at the Alan Turing Institute (CJO).}}
\institute{Tim W. Reid \at North Carolina State University  \\ Department of Mathematics \\ Raleigh, NC 27695-8205, US \\ twreid@alumni.ncsu.edu \and Ilse C. F. Ipsen \at  North Carolina State University  \\ Department of Mathematics \\ Raleigh, NC 27695-8205, US  \\ ipsen@ncsu.edu \and Jon Cockayne \at  University of Southampton \\ Department of Mathematical Sciences \\ Southampton, SO17 1BJ, UK \\ jon.cockayne@soton.ac.uk \and Chris J. Oates \at  Newcastle University \\ School of Mathematics and Statistics \\ Newcastle-upon-Tyne, NE1 7RU, UK \\ chris.oates@ncl.ac.uk}
\date{\today}

\begin{document}

\maketitle

\abstract{
We analyse the calibration of BayesCG under the Krylov prior, a
probabilistic numeric extension of the Conjugate Gradient (CG) method for 
solving systems of linear equations
 with symmetric positive definite coefficient matrix. Calibration refers
  to the statistical quality of the posterior covariances produced by a solver.
  Since BayesCG is not calibrated in the strict existing notion, we propose instead two test statistics that are necessary but not sufficient for calibration:  the $Z$-statistic and the new $S$-statistic. 
  We show analytically and experimentally that under low-rank approximate Krylov posteriors, BayesCG exhibits desirable properties of a calibrated solver, 
  is only slightly optimistic, and  is computationally competitive with CG.
  }

\section{Introduction}
We present a rigorous analysis of
the probabilistic numeric solver  BayesCG under the Krylov prior \cite{Cockayne:BCG,RICO21} 
for 
solving systems of linear equations
\begin{equation}
  \label{Eq:Axb}
  \Amat\xvec_* = \bvec,
\end{equation}
with symmetric positive definite coefficient matrix $\Amat\in\Rnn$.

\paragraph{Probabilistic numerics.} 
This area \cite{Cockayne:BPNM,HOG15,Oates} seeks 
to quantify the uncertainty due to limited computational resources, and to propagate these uncertainties through computational pipelines---sequences of computations where the output of one computation is the input for the next.
At the core of many computational pipelines are iterative linear solvers
\cite{CIOR20,HBH20,NW06,PZSHG12,SHB21},
whose computational resources are limited by the impracticality of running the solver to completion.
The premature termination generates uncertainty in the computed solution.

\paragraph{Probabilistic numeric linear solvers.} 
Probabilistic numeric extensions of
Kry\--lov space and stationary iterative 
methods
\cite{Bartels,CIOR20,Cockayne:BCG,Fanaskov21,Hennig,RICO21,WH20}
model the `epistemic uncertainty' in 
a quantity of interest, which can be the matrix inverse $\Amat^{-1}$ \cite{Hennig,Bartels,WH20} or the solution $\xvec_*$ \cite{Cockayne:BCG,Bartels,CIOR20,Fanaskov21}. Our
quantity of interest is the solution $\xvec_*$, and the `epistemic uncertainty' is the uncertainty in the user's knowledge of the true value of $\xvec_*$. 

The probabilistic solver takes as input a \emph{prior distribution} which models the initial uncertainty in $\xvec_*$ and then computes \emph{posterior distributions} which model the uncertainty remaining after each iteration.
\figref{F:PriorPosterior} depicts a prior and posterior distribution for the solution $\xvec_*$ of 2-dimensional linear system. 

\begin{figure}
  \centering
  \includegraphics[scale=.35]{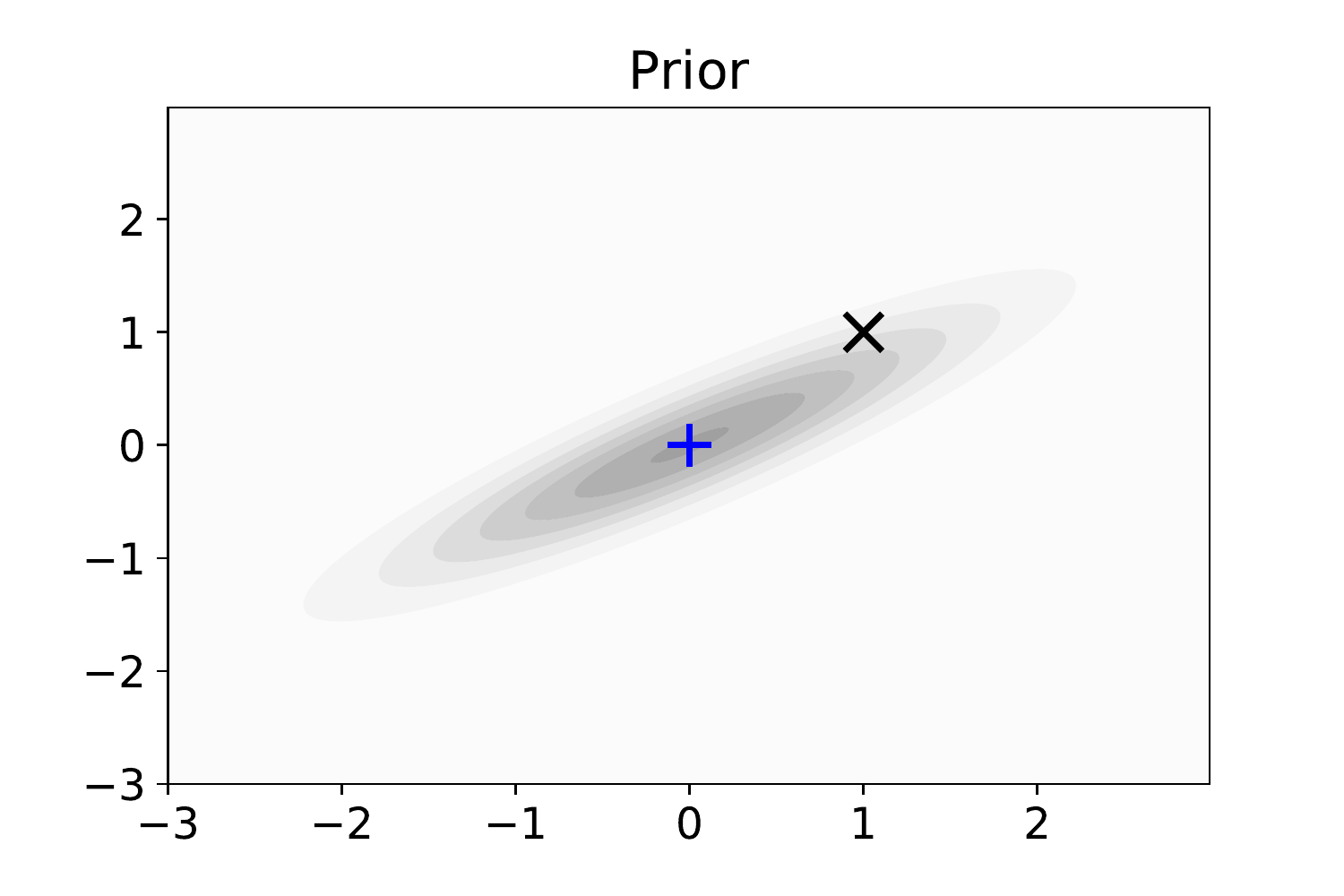} \\
  \includegraphics[scale=.35]{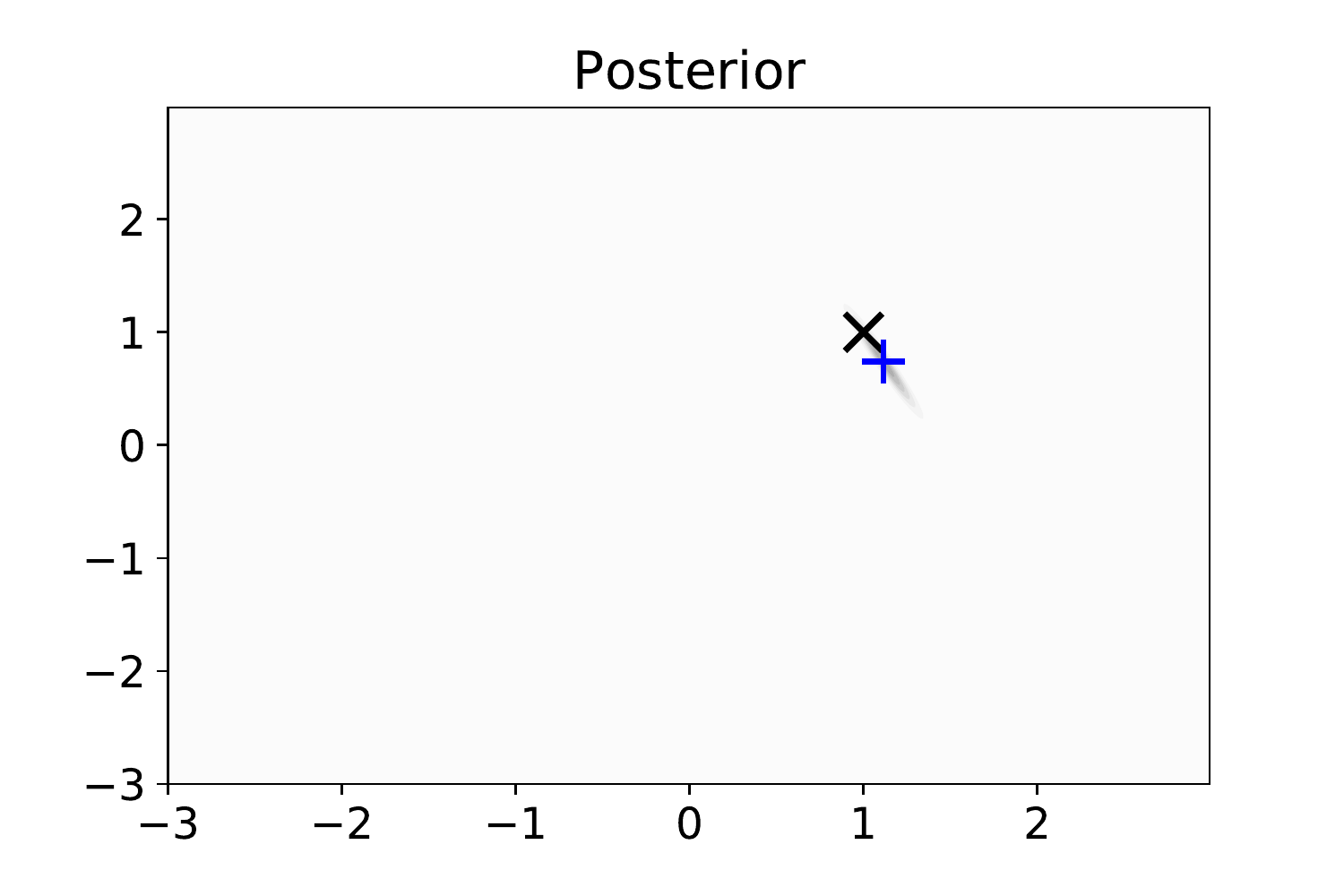}
  \includegraphics[scale=.35]{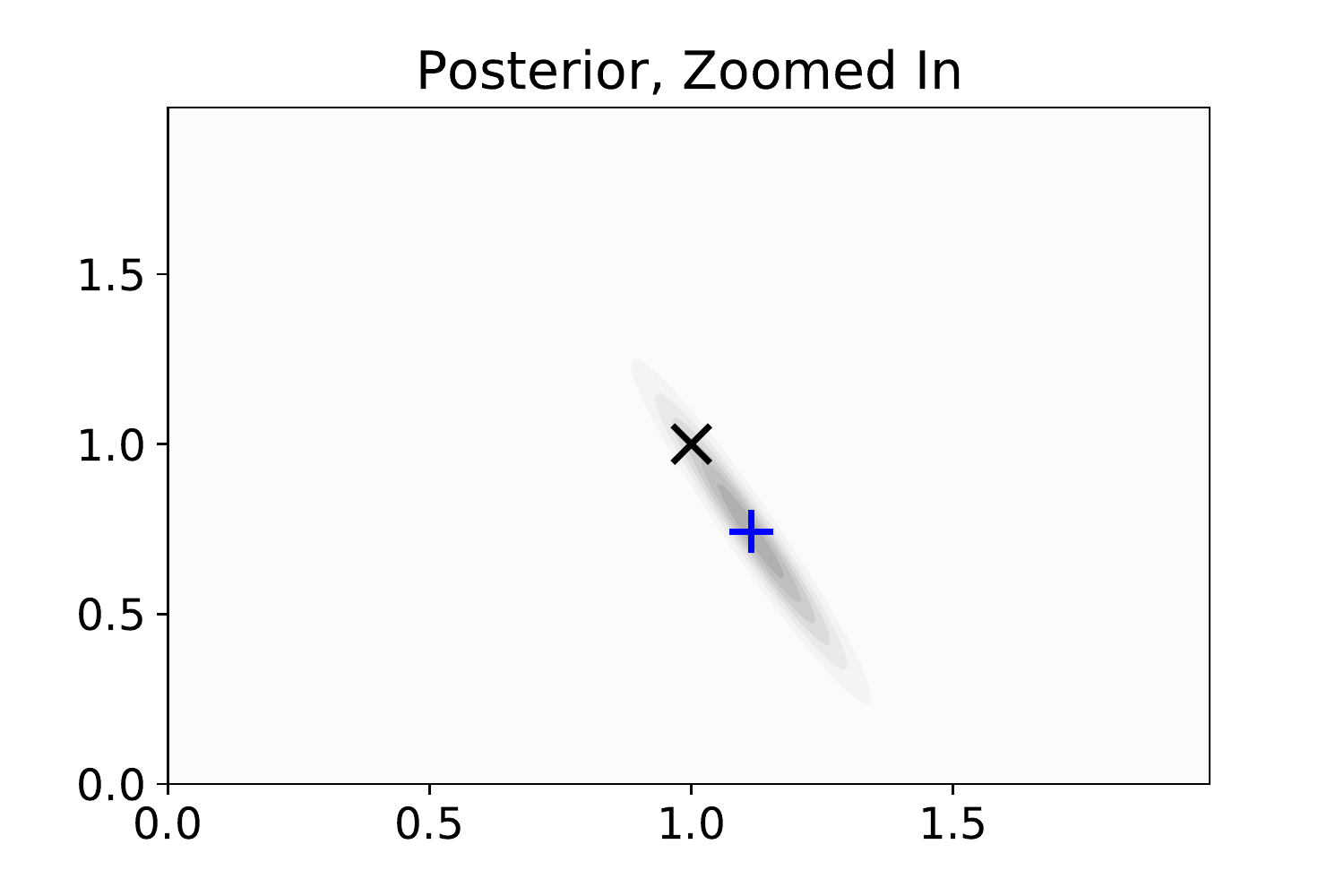}
  \caption{Prior and posterior distributions for a linear system (\ref{Eq:Axb}) with $n=2$. Top plot: prior distribution. Bottom plots: posterior distributions, where the bottom right is a zoomed in version of the bottom left. The gray shaded contours represent the areas in which the distributions are concentrated, the symbol `$\times$' represents the solution, and the symbol `\textcolor{DarkBlue}{+}' the mean of the prior or posterior.}
  \label{F:PriorPosterior}
\end{figure}

\paragraph{Calibration.} An important criterion of probabilistic  solvers is the statistical quality of their posterior distributions.
A solver is `calibrated' if its posterior distributions accurately model the uncertainty in $\xvec_*$ \cite[Section 6.1]{Cockayne:BCG}. Probabilistic Krylov solvers are not always calibrated because their posterior distributions tend to be \emph{pessimistic}. This means, the posteriors imply that the error is larger than it actually is
\cite[Section 6.4]{Bartels},
\cite[Section 6.1]{Cockayne:BCG}.  Previous efforts for improving calibration have focused on scaling the posterior covariances 
\cite[Section 4.2]{Cockayne:BCG},
\cite[Section 7]{Fanaskov21},
\cite[Section 3]{WH20}.

\paragraph{BayesCG.}
We analyze the calibration of BayesCG
under the Krylov prior \cite{Cockayne:BCG,RICO21}.
BayesCG was introduced in \cite{Cockayne:BCG}
as a probabilistic numeric extension of the Conjugate Gradient (CG) method \cite{Hestenes}  for solving
(\ref{Eq:Axb}). The 
Krylov prior proposed in
\cite{RICO21} makes BayesCG competitive
with CG. The numerical properties of BayesCG under the Krylov prior are
analysed in \cite{RICO21}, while here
we analyse its statistical properties.

\subsection{Contributions and Overview}

\paragraph{Overall Conclusion.}
BayesCG under the Krylov prior is not calibrated in the strict sense, but has the desirable properties of a calibrated solver. Under the efficient approximate Krylov posteriors, BayesCG is only slightly optimistic and competitive with CG.

\paragraph{Background (\sref{S:Background}).}
We present a short review of 
BayesCG, and the Krylov prior and posteriors.

\paragraph{Approximate Krylov posteriors (\sref{S:KrylovProperties}).}
  We define the $\Amat$-Wasserstein distance 
(Definition  \ref{D:BWasserstein}, Theorem \ref{L:WassersteinNorm});
  determine the error between Krylov posteriors and their low-rank approximations in the $\Amat$-Wasserstein distance (\tref{T:Wasserstein}); and  present a statistical interpretation of a Krylov prior as an empirical Bayesian procedure (Theorem~\ref{T:KrylovEmp},
  Remark~\ref{R:DetRandom}). 
  
\paragraph{Calibration (\sref{S:Calibration}).}
We review the strict notion of calibration for probabilistic solvers 
(Definition~\ref{D:StrongCalibration},
Lemma~\ref{L:CalibrationAlt}), and show that it does not apply to BayesCG under the Krylov prior (Remark~\ref{r_41}).

We relax the strict notion above and propose
as an alternative form of assessment  
 two test statistics that are necessary but not sufficient for calibration:  the $Z$-statistic (\tref{T:ZStat}) and the new $S$-statistic (\tref{T:CalibratedTrace}, \dref{D:SStat}). We present implementations for both (Algorithms \ref{A:ZStat} and
\ref{A:SStat}); and apply a Kolmogorov-Smirnov statistic
(Definition~\ref{D:KSTest})
for evaluating the quality of samples from the $Z$-statistic.

The $Z$-statistic
is inconclusive about the calibration of BayesCG under the Krylov prior (\tref{T:KrylovZ}), while
the $S$-statistic indicates that it is not calibrated (Section~\ref{S:BayesCGS}).

\paragraph{Numerical Experiments (\sref{S:Experiments}).}
We create a calibrated but slowly converging version of BayesCG
which has random search directions, and use it as a baseline for comparison with
two BayesCG versions that both replicate CG: BayesCG under the inverse and the Krylov priors. 

We assess calibration with the $Z$- and $S$-statistics for  BayesCG with random search directions
(Algorithms \ref{A:ALanczos}
and~\ref{A:BayesCGC});
BayesCG under the inverse prior
(Algorithms \ref{A:BayesCG} and~\ref{A:BayesCGF});
and BayesCG under the Krylov prior with full posteriors (Algorithm~\ref{A:BayesCGKFull})
and approximate posteriors
(Algorithm \ref{A:BayesCGWithoutBayesCG}).
  
Both, $Z$- and $S$ statistics indicate that BayesCG with random search directions is indeed a calibrated solver, while BayesCG under the inverse prior is pessimistic.

The $S$-statistic indicates that 
BayesCG under full Krylov posteriors mimics a calibrated solver, and that
BayesCG under rank-50 approximate posteriors does as well, 
although not as much, since it is slightly optimistic.  

\subsection{Notation}
Matrices are represented in 
bold uppercase, such as $\Amat$; vectors in bold lowercase, such as $\bvec$; and scalars in lowercase, such as $m$.

The $m\times m$ identity matrix is $\Imat_m$, or just $\Imat$ if the dimension is clear. The Moore-Penrose inverse of a matrix $\Amat$ is $\Amat^\dagger$, and the matrix square root is $\Amat^{1/2}$ \cite[Chapter 6]{Higham08}.

Probability distributions are represented in lowercase Greek, such as $\mu_m$; and random variables in  uppercase, such as $\Xrv$. 
The random variable $\Xrv$ having distribution $\mu$ is represented by
$\Xrv\sim\mu$, and its expectation by $\Exp[\Xrv]$. 

The Gaussian distribution with mean $\xvec\in\Rn$ and covariance $\Sigmat\in\Rnn$ is denoted by $\N(\xvec,\Sigmat)$, and the chi-squared distribution with $f$ degrees of freedom by $\chi_f^2$.

\section{Review of Existing Work}
\label{S:Background}

We review BayesCG (\sref{S:BayesCG}), 
the ideal Krylov prior (\sref{S:KrylovBackground}),
and practical approximations for Krylov posteriors (\sref{S:approxKrylov}).
All statements in this section hold in exact arithmetic.

\subsection{BayesCG}
\label{S:BayesCG}

We review the computation of posterior distributions
for BayesCG under general priors (Theorem~\ref{T:BayesCG}), 
and present a pseudo code for BayesCG (Algorithm~\ref{A:BayesCG}).

Given an initial guess $\xvec_0$, BayesCG \cite{Cockayne:BCG} solves symmetric positive definite linear systems~(\ref{Eq:Axb})
by computing iterates $\xvec_m$ that converge to the solution
$\xvec_*$. In addition, BayesCG 
computes probability distributions that quantify
the uncertainty about the solution at each iteration~$m$.
Specifically, for a user-specified Gaussian prior $\mu_0 \equiv \N(\xvec_0,\Sigmat_0)$, 
BayesCG computes posterior distributions $\mu_m \equiv \N(\xvec_m,\Sigmat_m)$, by conditioning a random variable $\Xrv\sim\mu_0$ on  information from $m$ search directions $\Smat_m$.

\begin{theorem}[{\cite[Proposition 1]{Cockayne:BCG}, \cite[Theorem 2.1]{RICO21}}]
  \label{T:BayesCG}
Let $\Amat\xvec_*=\bvec$ be a linear system where $\Amat\in\Rnn$
is symmetric positive definite.
  Let $\mu_0 \equiv \N(\xvec_0,\Sigmat_0)$ be a prior with symmetric positive semi-definite covariance
  $\Sigmat_0\in\Rnn$, and initial residual $\rvec_0\equiv\bvec_0-\Amat\xvec_0$. 
  
  Pick $m\leq n$ so that  $\Smat_ m\equiv \begin{bmatrix} \svec_1 & \svec_2 & \cdots & \svec_m \end{bmatrix}\in\Real^{\dm\times m}$ has $\rank(\Smat_m) = m$ and $\Lammat_m \equiv \Smat_m^T\Amat\Sigmat_0\Amat\Smat$ is non-singular.
  Then, the BayesCG posterior 
  $\mu_m\equiv  \N(\xvec_m,\Sigmat_m)$ has mean and covariance
  \begin{align}
    \xvec_m &= \xvec_0 + \Sigmat_0 \Amat\Smat_m \Lammat_m^{-1} \Smat_m^T\rvec_0 \label{Eq:XmTheory}\\
    \Sigmat_m &= \Sigmat_0 -  \Sigmat_0 \Amat\Smat_m \Lammat_m^{-1} \Smat_m^T \Amat \Sigmat_0. \label{Eq:SigmTheory}
  \end{align}
\end{theorem}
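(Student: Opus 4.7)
My plan is to treat BayesCG as Bayesian inference with linear observations and derive the posterior by applying the standard conditioning identity for jointly Gaussian random variables.

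First, I would identify the conditioning data. Left-multiplying $\Amat\xvec_*=\bvec$ by $\Smat_m^T$ gives $\Smat_m^T\Amat\xvec_*=\Smat_m^T\bvec$, so the $m$ search directions encode exact linear measurements of the unknown solution. Viewing the solution probabilistically as $\Xrv\sim\mu_0$, as BayesCG does, the posterior $\mu_m$ is the conditional law of $\Xrv$ given $\Smat_m^T\Amat\Xrv=\Smat_m^T\bvec$.

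Because $\mu_0$ is Gaussian and the observation map $\xvec\mapsto\Smat_m^T\Amat\xvec$ is linear, the pair $(\Xrv,\Smat_m^T\Amat\Xrv)$ is jointly Gaussian. The two covariance blocks I would need are the cross-covariance $\mathrm{Cov}(\Xrv,\Smat_m^T\Amat\Xrv)=\Sigmat_0\Amat\Smat_m$ and the observation covariance $\mathrm{Var}(\Smat_m^T\Amat\Xrv)=\Smat_m^T\Amat\Sigmat_0\Amat\Smat_m=\Lammat_m$, which is invertible by hypothesis. Substituting these into the textbook conditional-Gaussian formula yields the conditional mean $\xvec_0+\Sigmat_0\Amat\Smat_m\Lammat_m^{-1}(\Smat_m^T\bvec-\Smat_m^T\Amat\xvec_0)$, which simplifies via $\rvec_0=\bvec-\Amat\xvec_0$ to~(\ref{Eq:XmTheory}), and the conditional covariance~(\ref{Eq:SigmTheory}). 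The rank assumption on $\Smat_m$ ensures the $m$ observations are non-redundant.

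The main technical obstacle is that $\Sigmat_0$ is only assumed positive semi-definite, so $\mu_0$ may be supported on a proper affine subspace of $\mathbb{R}^n$ and the density-based derivation of the Gaussian conditioning formula does not directly apply. I would deal with this either by a continuity/regularisation argument, replacing $\Sigmat_0$ with $\Sigmat_0+\epsilon\,\Imat$, applying the formula in the strictly positive definite case, and letting $\epsilon\to 0^+$ using the continuous dependence of every expression on $\Sigmat_0$, or by invoking the projection-based characterisation of Gaussian conditioning, which remains valid for degenerate covariances. In either route, the invertibility of $\Lammat_m$ is precisely what keeps the conditioning operation well-defined despite a possibly rank-deficient prior, so no further pathology arises and the closed form above is the correct limit.
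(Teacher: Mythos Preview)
Your proposal is correct and follows essentially the same route that the paper relies on. The paper does not prove \tref{T:BayesCG} directly (it is quoted from \cite{Cockayne:BCG,RICO21}), but it points to the underlying argument in the proof of \tref{T:KrylovEmp}, where it invokes the stability and conjugacy of Gaussians (Lemmas~\ref{L:GaussStability} and~\ref{L:GaussConjugacy}) exactly as you do: form the joint Gaussian of $\Xrv$ and the linear observation $\Smat_m^T\Amat\Xrv$, read off the cross- and observation-covariances, and apply the conditional-Gaussian formula. The only cosmetic difference is that Lemma~\ref{L:GaussConjugacy} is stated with a Moore--Penrose inverse $\Sigmat_y^\dagger$, which sidesteps the degenerate-prior issue you flag; since $\Lammat_m$ is assumed nonsingular here, this reduces to your ordinary inverse, and your $\epsilon$-regularisation or projection argument is an equally valid way to justify the same formula.
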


\aref{A:BayesCG} represents the
 iterative computation of the posteriors from  \cite[Propositions 6 and 7]{Cockayne:BCG}, \cite[Theorem 2.7]{RICO21}. 
To illustrate the resemblance of BayesCG and the Conjugate Gradient
method, we present the most common implementation of CG
in \aref{A:CG}. 

BayesCG (\aref{A:BayesCG}) computes specific search directions $\Smat_m$ 
with two
additional  properties:
\begin{enumerate}
\item They are $\Amat\Sigmat_0\Amat$-orthogonal, which means
that  $\Lammat_m = \Smat_m^T\Amat\Sigmat_0\Amat\Smat_m$ is diagonal \cite[Section 2.3]{Cockayne:BCG}, 
thus easy to invert.
\item They form a basis for the Krylov space  \cite[Proposition S4]{BCG:Supp}
  \begin{equation*}
    \range(\Smat_m) = \Kcal_m(\Amat\Sigmat_0\Amat,\rvec_0) \equiv\spn\{\rvec_0, \Amat\Sigmat_0\Amat\rvec_0, \ldots, (\Amat\Sigmat_0\Amat)^{m-1}\rvec_0\}.
  \end{equation*}
 \end{enumerate}
  
\begin{remark}  
The additional requirement $\xvec_*-\xvec_0 \in \range(\Sigmat_0)$ 
in \aref{A:BayesCG} ensures the nonsingularity of $\Lammat_m$
as required by \tref{T:BayesCG},
even for singular prior covariance matrices $\Sigmat_0$ \cite[Theorem 2.7]{RICO21}.
\end{remark}

\begin{algorithm}
\caption{BayesCG
\cite[Algorithm 2.1]{RICO21}}
\label{A:BayesCG}
\begin{algorithmic}[1]
\State \textbf{Input:} spd $\Amat\in\Rnn$, $\bvec\in\Rn$, prior $\mu_0 = \N(\xvec_0,\Sigmat_0)$ \Comment{with $\xvec_*-\xvec_0\in\range(\Sigmat_0)$}
\State{$  \rvec_0 =  \bvec- \Amat\xvec_0$} \Comment{Initial residual}
\State{$ \svec_1 =  \rvec_0$} \Comment{Initial search direction}
\State{$m=0$} \Comment{Initial iteration count}
\While{not converged}
\State{$m = m+1$}\Comment{Increment iteration count}
\State{$\alpha_m = \left( \rvec_{m-1}^T  \rvec_{m-1}\right)\big/\left( \svec_m^T \Amat\Sigmat_{0}  \Amat\svec_m\right)$}
\State{$ \xvec_m =  \xvec_{m-1} + \alpha_m  \Sigmat_0 \Amat  \svec_m $} \Comment{Next posterior mean} \label{A:Line:x}
\State{$\Sigmat_m = \Sigmat_{m-1} - \Sigmat_0\Amat\svec_m\left(\Sigmat_0\Amat\svec_m\right)^T\big/(\svec_m^T\Amat\Sigmat_0\Amat\svec_m)$} \Comment{Next posterior covariance}
\State{$ \rvec_m =  \rvec_{m-1} - \alpha_m \Amat\Sigmat_0\Amat\svec_m$}
\Comment{Next residual}
\State{$\beta_m = \left( \rvec_m^T \rvec_m\right)\big/\left( \rvec_{m-1}^T\rvec_{m-1}\right)$}
\State{$ \svec_{m+1} =  \rvec_m+\beta_m  \svec_m$} \Comment{Next $\Amat\Sigmat_0\Amat$-orthogonal search direction} \label{A:Line:s}
\EndWhile
\State \textbf{Output:} $\mu_m = \N(\xvec_m,\Sigmat_m)$\Comment{Final posterior}
\end{algorithmic}
\end{algorithm}

\begin{algorithm}
\caption{Conjugate Gradient Method (CG) \cite[Section 3]{Hestenes}}
\label{A:CG}
\begin{algorithmic}[1]
  \State{\textbf{Input:} spd $\Amat\in\Rnn$, $\bvec\in\Rn$, $\xvec_0\in\Rn$}
  \State{$  \rvec_0 =  \bvec- \Amat\xvec_0$}   \Comment{Initial residual}
  \State{$\wvec_1 = \rvec_0$} \Comment{Initial search direction}
  \State{$m = 0$} \Comment{Initial iteration count}
  \While{Not converged}
  \State{$m = m + 1$} \Comment{Increment iteration count}
\State{$\gamma_m = (\rvec_{m-1}^T\rvec_{m-1})\big/ (\wvec_m^T\Amat\wvec_m)$}
\Comment{Next step size}
\State{$ \xvec_m =  \xvec_{m-1} +   \gamma_m\wvec_m $}\Comment{Next iterate}
\State{$\rvec_m = \rvec_{m-1} - \gamma_m\Amat\wvec_m$}\Comment{Next residual}
\State{$\delta_m = (\rvec_m^T\rvec_m)\big/ (\rvec_{m-1}^T\rvec_{m-1})$}
\State{$ \wvec_{m+1} =  \rvec_m+  \delta_m\wvec_m$} 
\Comment{Next search direction}
\EndWhile
\State \textbf{Output:} $\xvec_m$ \Comment{Final approximation for $\xvec_*$}
\end{algorithmic}
\end{algorithm}

\subsection{The ideal Krylov Prior}
\label{S:KrylovBackground}
After defining the 
Krylov space of maximal dimension (Definition~\ref{e_kmax}),
we review the ideal but impractical Krylov prior 
(Definition~\ref{D:KrylovPrior}), and discuss its construction (Lemma~\ref{L:KrylovCG}) and properties (Theorem~\ref{T:KrylovPrior}).

\begin{definition}\label{e_kmax}
The Krylov space of \textit{maximal dimension} for \aref{A:CG} is
\begin{equation*}
  \Kcal_\kry(\Amat,\rvec_0) \equiv \spn\{\rvec_0,\Amat\rvec_0, \ldots, \Amat^{\kry-1}\rvec_0\}.
\end{equation*}
Here $\kry\leq n$ represents the \textit{grade} of $\rvec_0$ with respect to $\Amat\in\mathbb{R}^{n\times n}$ \cite[Definition 4.2.1]{Liesen}, or the \textit{invariance index} for $(\Amat, \rvec_0)$ \cite[Section 2]{Berljafa},
which is the minimum value where
\begin{align*}
\Kcal_\kry(\Amat,\rvec_0) = \Kcal_{\kry+i} (\Amat,\rvec_0), \qquad i\geq 1.
\end{align*}
\end{definition}

The Krylov prior is a Gaussian distribution whose covariance 
is constructed
from a basis for the maximal dimensional CG Krylov space.

\begin{definition}[{\cite[Definition 3.1]{RICO21}}]
\label{D:KrylovPrior}
The \emph{ideal Krylov prior} for $\Amat\xvec_*=\bvec$ 
is $\eta_0 \equiv \N(\xvec_0,\Gammat_0)$ with
symmetric positive semi-definite covariance
\begin{equation}
\label{Eq:KrylovPrior}
\Gammat_0 \equiv \Vmat \Phimat \Vmat^T \in\Rnn.
\end{equation}
The columns of $ \Vmat \equiv \begin{bmatrix} \vvec_1 & \vvec_2 & \cdots & \vvec_\kry \end{bmatrix} \in\Real^{n\times\kry}$ are an $\Amat$-orthonormal basis 
for $\Kcal_\kry(\Amat,\rvec_0)$, which means that
\begin{equation*}
  \Vmat^T\Amat\Vmat = \Imat_g \quad \text{and} \quad \spn\{\vvec_1, \ldots, \vvec_i\} = \Kcal_i(\Amat,\rvec_0), \quad 1\leq i \leq \kry.
\end{equation*}
The diagonal matrix $\Phimat \equiv \diag\begin{pmatrix} \phi_1 &\cdots & \phi_\kry\end{pmatrix}\in\Real^{\kry\times \kry}$ 
has diagonal elements
\begin{equation}
\label{Eq:PhiDef}
  \phi_i = (\vvec_i^T\rvec_0)^2 , \qquad 1\leq i \leq \kry.
\end{equation}
\end{definition}

\begin{remark}
The Krylov prior covariance satisfies the requirement of \aref{A:BayesCG} that $\xvec_*-\xvec_0 \in \range(\Gammat_0)$. This follows from \cite[Section 5.6]{Liesen},
\begin{align*}
\xvec_* \in \xvec_0 + \Kcal_\kry(\Amat,\rvec_0)=\range(\Gammat_0).
\end{align*}
If the maximal Krylov space $\Kcal_\kry(\Amat,\rvec_0)$ has dimension
$\kry < n$, then $\Gammat_0$ is singular.
\end{remark}

\begin{lemma}[{\cite[Remark SM2.1]{RICO21}}]
  \label{L:KrylovCG}
  The Krylov prior $\Gammat_0$ can be constructed from quantities computed by CG (\aref{A:CG}),
  \begin{equation*}
    \vvec_i \equiv \wvec_i / (\wvec_i^T\Amat\wvec_i), \quad \text{and} \quad \phi_i \equiv \gamma_i\|\rvec_{i-1}\|_2^2, \qquad 1\leq i \leq \kry.
  \end{equation*}
\end{lemma}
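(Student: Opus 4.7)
The plan is to verify that the two explicit formulas produced by Algorithm~\ref{A:CG} satisfy the three defining requirements of the Krylov prior covariance in Definition~\ref{D:KrylovPrior}: (i) the $\Amat$-orthonormality $\Vmat^T\Amat\Vmat=\Imat_\kry$, (ii) the nested span property $\spn\{\vvec_1,\ldots,\vvec_i\}=\Kcal_i(\Amat,\rvec_0)$ for each $1\leq i\leq \kry$, and (iii) $\phi_i=(\vvec_i^T\rvec_0)^2$. All three will follow from standard facts about the CG search directions and residuals.

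For (i) and (ii) I would invoke the classical CG identities that the search directions are mutually $\Amat$-conjugate, $\wvec_i^T\Amat\wvec_j=0$ for $i\neq j$, and that $\spn\{\wvec_1,\ldots,\wvec_i\}=\Kcal_i(\Amat,\rvec_0)$; both are proved by an induction on the CG recurrence $\wvec_{i+1}=\rvec_i+\delta_{i+1}\wvec_i$, in which $\delta_{i+1}$ is defined precisely to enforce conjugacy against $\wvec_i$. Since each $\vvec_i$ is a nonzero scalar multiple of $\wvec_i$, the span property (ii) transfers immediately, and the orthonormality (i) reduces to checking that the prescribed normalization produces $\vvec_i^T\Amat\vvec_i=1$.

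For (iii) the key auxiliary identity is
\[
\wvec_i^T\rvec_0 \;=\; \|\rvec_{i-1}\|_2^2, \qquad 1\leq i\leq \kry.
\]
I would prove it by induction on $i$. The base case $i=1$ is immediate since $\wvec_1=\rvec_0$. For $i\geq 2$, expand $\wvec_i^T\rvec_0=\rvec_{i-1}^T\rvec_0+\delta_i\,\wvec_{i-1}^T\rvec_0$, use the residual orthogonality $\rvec_{i-1}\perp\rvec_0$, invoke the inductive hypothesis $\wvec_{i-1}^T\rvec_0=\|\rvec_{i-2}\|_2^2$, and apply the CG formula $\delta_i=\|\rvec_{i-1}\|_2^2/\|\rvec_{i-2}\|_2^2$. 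Combining this identity with the step-size formula $\gamma_i=\|\rvec_{i-1}\|_2^2/(\wvec_i^T\Amat\wvec_i)$ and the normalization of $\vvec_i$ from the previous step, substitution into $(\vvec_i^T\rvec_0)^2$ yields $\|\rvec_{i-1}\|_2^4/(\wvec_i^T\Amat\wvec_i)=\gamma_i\|\rvec_{i-1}\|_2^2$, which is exactly the claimed $\phi_i$.

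The main obstacle is the identity $\wvec_i^T\rvec_0=\|\rvec_{i-1}\|_2^2$: it cannot be read off any single CG recurrence and must be assembled from the interplay of $\Amat$-conjugacy of the search directions, mutual orthogonality of the residuals, and the explicit $\gamma_i$ and $\delta_i$ formulas. Once that identity is in hand, the remainder of the proof is algebraic bookkeeping on the definitions of $\vvec_i$ and $\phi_i$.
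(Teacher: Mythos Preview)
The paper does not give its own proof of this lemma; it is simply stated with a citation to \cite[Remark~SM2.1]{RICO21}. Your strategy---verify $\Amat$-conjugacy and the Krylov span property of the CG search directions, then establish $\wvec_i^T\rvec_0=\|\rvec_{i-1}\|_2^2$ by induction---is the standard and correct route to the intended result.

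There is, however, a genuine snag that you pass over. As printed, the normalization $\vvec_i=\wvec_i/(\wvec_i^T\Amat\wvec_i)$ does \emph{not} give $\Amat$-orthonormal columns: one gets $\vvec_i^T\Amat\vvec_i=1/(\wvec_i^T\Amat\wvec_i)$, so the requirement $\Vmat^T\Amat\Vmat=\Imat_\kry$ from Definition~\ref{D:KrylovPrior} fails. The intended formula is $\vvec_i=\wvec_i/\sqrt{\wvec_i^T\Amat\wvec_i}$, which is exactly what the paper uses in Algorithm~\ref{A:BayesCGWithoutBayesCG}. Your step~(iii) tacitly relies on the corrected version: with the lemma's stated $\vvec_i$ one would obtain
\[
(\vvec_i^T\rvec_0)^2=\frac{\|\rvec_{i-1}\|_2^4}{(\wvec_i^T\Amat\wvec_i)^2},
\]
not $\|\rvec_{i-1}\|_2^4/(\wvec_i^T\Amat\wvec_i)$, and the claimed equality with $\gamma_i\|\rvec_{i-1}\|_2^2$ would be off by a factor of $\wvec_i^T\Amat\wvec_i$. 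So your argument proves the \emph{intended} statement, but you should flag the missing square root explicitly rather than silently repair it mid-computation. A minor secondary point: your indexing of $\delta$ in the recurrence drifts between $\delta_i$ and $\delta_{i+1}$; in the paper's Algorithm~\ref{A:CG} the update is $\wvec_{m+1}=\rvec_m+\delta_m\wvec_m$ with $\delta_m=\|\rvec_m\|_2^2/\|\rvec_{m-1}\|_2^2$.
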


The posterior distributions from BayesCG under the Krylov prior depend on submatrices of $\Vmat$ and $\Phimat$,
\begin{equation}
\begin{split}\label{Eq:VPhiSubmatrices}
  \Vmat_{i:j} &\equiv \begin{bmatrix} \vvec_i &  \cdots & \vvec_j \end{bmatrix}  \\
  \Phimat_{i:j} &\equiv \diag\begin{pmatrix} \phi_i &  \cdots & \phi_j \end{pmatrix}, \qquad 1 \leq i \leq j \leq \kry,
  \end{split}
\end{equation}
where $\Vmat_{1:g}=\Vmat$, $\Phimat_{1:g}=\Phimat$, and $\Vmat_{j+1:j} =
\Phimat_{j+1:j}=\zerovec$, $1\leq j\leq n$.

Under suitable assumptions, BayesCG (Algorithm~\ref{A:BayesCG}) produces
the same iterates as CG (Algorithm~\ref{A:CG}).

\begin{theorem}[{\cite[Theorem 3.3]{RICO21}}]
  \label{T:KrylovPrior}
  Let $\xvec_0$ be the starting vector for  CG (Algorithm~\ref{A:CG}). 
Then BayesCG (Algorithm~\ref{A:BayesCG}) under the Krylov prior
$\eta_0 \equiv \N(\xvec_0,\Gammat_0)$ produces
Krylov posteriors $\eta_m\equiv \mathcal{N}(\xvec_m,\Gammat_m)$ 
whose mean vectors 
\begin{align*}
\xvec_m=\xvec_0+\Vmat_{1:m}\Vmat_{1:m}^T\rvec_0,
\qquad 1\leq m\leq g,
\end{align*}
are identical to the iterates in CG (Algorithm~\ref{A:CG}), and whose covariance matrices 
  \begin{align}
\label{Eq:GammaN}
\Gammat_m = \Vmat_{m+1:\kry}\Phimat_{m+1:\kry}\Vmat_{m+1:\kry}^T, \qquad 1\leq m < g,
  \end{align}
satisfy
\begin{equation}
  \label{Eq:PostTrace}
 \trace(\Amat\Gammat_m) = \trace(\Phimat_{m+1:\kry}) = \|\xvec_*-\xvec_m\|_\Amat^2.
\end{equation}
\end{theorem}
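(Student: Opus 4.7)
The plan is to specialize Theorem \ref{T:BayesCG} to $\Sigmat_0 = \Gammat_0 = \Vmat\Phimat\Vmat^T$ and exploit the $\Amat$-orthonormality $\Vmat^T\Amat\Vmat = \Imat_g$. Since the combination $\Smat_m\Lammat_m^{-1}\Smat_m^T$ in formulas (\ref{Eq:XmTheory})--(\ref{Eq:SigmTheory}) is invariant under right multiplication of $\Smat_m$ by any nonsingular matrix, the posterior mean and covariance depend on the search-direction block only through $\range(\Smat_m)$. The first step is therefore to prove by induction on $m$ that the BayesCG search space $\Kcal_m(\Amat\Gammat_0\Amat,\rvec_0)$ coincides with $\Kcal_m(\Amat,\rvec_0) = \range(\Vmat_{1:m})$. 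The key calculation is that the restriction of $\Gammat_0\Amat$ to $\range(\Vmat)$ is the diagonal scaling $\vvec_i\mapsto\phi_i\vvec_i$, so $\Amat\Gammat_0\Amat$ maps $\range(\Vmat_{1:m})$ into $\Amat\,\range(\Vmat_{1:m}) \subseteq \range(\Vmat_{1:m+1})$ with a genuinely new direction at each step, by the non-degeneracy of CG before termination at step $\kry$.

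With this equivalence in hand, I take $\Smat_m = \Vmat_{1:m}$. Then $\Vmat^T\Amat\Vmat = \Imat_g$ yields $\Gammat_0\Amat\Vmat_{1:m} = \Vmat_{1:m}\Phimat_{1:m}$ and $\Lammat_m = \Phimat_{1:m}$, and substitution into (\ref{Eq:XmTheory})--(\ref{Eq:SigmTheory}) collapses the update to
\begin{align*}
\xvec_m &= \xvec_0 + \Vmat_{1:m}\Phimat_{1:m}\Phimat_{1:m}^{-1}\Vmat_{1:m}^T\rvec_0 = \xvec_0 + \Vmat_{1:m}\Vmat_{1:m}^T\rvec_0,\\
\Gammat_m &= \Vmat\Phimat\Vmat^T - \Vmat_{1:m}\Phimat_{1:m}\Vmat_{1:m}^T = \Vmat_{m+1:\kry}\Phimat_{m+1:\kry}\Vmat_{m+1:\kry}^T.
\end{align*}
Agreement with the CG iterates follows from Lemma \ref{L:KrylovCG}, which identifies $\vvec_i$ with a normalized CG search direction: $\Vmat_{1:m}\Vmat_{1:m}^T\rvec_0$ is then exactly the $\Amat$-orthogonal projection of $\xvec_*-\xvec_0$ onto $\Kcal_m(\Amat,\rvec_0)$, which is the CG correction at step $m$.

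The trace identities are short. Cyclic invariance of the trace combined with $\Vmat_{m+1:\kry}^T\Amat\Vmat_{m+1:\kry} = \Imat$ gives $\trace(\Amat\Gammat_m) = \trace(\Phimat_{m+1:\kry})$. For the error-norm equality, since $\xvec_*-\xvec_0 \in \Kcal_\kry(\Amat,\rvec_0) = \range(\Vmat)$, applying $\Vmat^T$ to $\rvec_0 = \Amat(\xvec_*-\xvec_0)$ and using $\Vmat^T\Amat\Vmat = \Imat_g$ yields $\xvec_*-\xvec_0 = \Vmat\Vmat^T\rvec_0$. Subtracting the mean formula gives $\xvec_*-\xvec_m = \Vmat_{m+1:\kry}\Vmat_{m+1:\kry}^T\rvec_0$, whose squared $\Amat$-norm evaluates via $\Vmat^T\Amat\Vmat = \Imat_g$ and (\ref{Eq:PhiDef}) to $\sum_{i=m+1}^\kry (\vvec_i^T\rvec_0)^2 = \sum_{i=m+1}^\kry \phi_i = \trace(\Phimat_{m+1:\kry})$.

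The main obstacle is the inductive identification $\Kcal_m(\Amat\Gammat_0\Amat,\rvec_0) = \Kcal_m(\Amat,\rvec_0)$ together with the non-degeneracy check that rules out premature breakdown of the BayesCG Krylov sequence. Once that is in place, everything else is algebraic bookkeeping against $\Vmat^T\Amat\Vmat = \Imat_g$; the substance of the theorem is really this Krylov-space equivalence, which encodes the CG--BayesCG correspondence under this special prior.
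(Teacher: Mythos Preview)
The paper does not prove this theorem; it is quoted verbatim from \cite[Theorem 3.3]{RICO21} in the review Section~\ref{S:Background} and is followed immediately by discussion, with no proof environment. So there is nothing in this paper to compare against.

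That said, your argument is correct and is essentially the natural proof one would expect in \cite{RICO21}. The key step---showing $\Kcal_m(\Amat\Gammat_0\Amat,\rvec_0)=\Kcal_m(\Amat,\rvec_0)$ via the observation that $\Gammat_0\Amat$ restricted to $\range(\Vmat)$ is the diagonal scaling $\vvec_i\mapsto\phi_i\vvec_i$---is exactly right, and your use of the basis-invariance of $\Smat_m\Lammat_m^{-1}\Smat_m^T$ to replace the algorithm's search directions by $\Vmat_{1:m}$ is the clean way to finish. The only point worth making explicit in the induction is that $\phi_i>0$ for $i\leq\kry$, which follows from Lemma~\ref{L:KrylovCG} since $\phi_i=\gamma_i\|\rvec_{i-1}\|_2^2$ and CG has not terminated; you allude to this as ``non-degeneracy'' but it would be worth writing down. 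The identification of $\xvec_m$ with the CG iterate via the $\Amat$-orthogonal projection onto $\Kcal_m(\Amat,\rvec_0)$ is standard, and the trace and error-norm computations are routine once $\xvec_*-\xvec_m=\Vmat_{m+1:\kry}\Vmat_{m+1:\kry}^T\rvec_0$ is in hand.
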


Explicit construction of the ideal Krylov prior, followed by explicit computation
of the Krylov posteriors in Algorithm~\ref{A:BayesCG}
is impractical, because it is more expensive than solving
the linear system (\ref{Eq:Axb}) in the first place. That is the reason
for introducing practical, approximate Krylov posteriors.

\subsection{Practical Krylov posteriors}\label{S:approxKrylov}
 We dispense with the explicit computation of the Krylov prior, and instead
compute  a low-rank approximation 
of the final posterior (Definition~\ref{D:KrylovApprox})
by running $d$ additional iterations. The corresponding CG-based implementation of
BayesCG under approximate Krylov posteriors 
is relegated to \aref{A:BayesCGWithoutBayesCG} in \appref{S:Algs}.

\begin{definition}[{\cite[Definition 3.4]{RICO21}}]
  \label{D:KrylovApprox}
Given the  Krylov prior $\eta_0\equiv\mathcal{N}(\xvec_0,\Gammat_0)$
with posteriors $\eta_m\equiv\mathcal{N}(\xvec_m,\Gammat_m)$, pick some $d\geq 1$. 
The rank-$d$ approximation of $\eta_m$
 is a Gaussian distribution 
 $\widehat{\eta}_m \equiv \N(\xvec_m,\widehat{\Gammat}_m)$ with
 the same mean $\xvec_m$ as~$\eta_m$, and a rank-$d$ covariance
  \begin{equation*}
    \widehat{\Gammat}_m \equiv \Vmat_{m+1:m+d} \Phimat_{m+1:m+d} \,\Vmat_{m+1:m+d}^T, \qquad 1\leq m< g-d,
  \end{equation*}
that consists of the leading $d$ columns of $\Vmat_{m+1:g}$.
\end{definition}

In contrast to the full Krylov posteriors, which reproduce the error as 
in~(\ref{Eq:PostTrace}), approximate Krylov posteriors underestimate the error \cite[Section 3.4]{RICO21},
\begin{equation}
  \label{Eq:StrakosTichy}
  \trace(\Amat\widehat\Gammat_m) = \trace(\Phimat_{m+1:m+d}) =  \|\xvec_*-\xvec_m\|_\Amat^2 - \|\xvec_*-\xvec_{m+d}\|_\Amat^2,
\end{equation}
where $\|\xvec_*-\xvec_{m+d}\|_\Amat^2$ is the error after $m+d$ iterations of CG. The error underestimate $\trace(\Amat\widehat\Gammat_m)$ is equal to \cite[Equation(4.9)]{StrakosTichy}, and it is more accurate when convergence is fast. Fast convergence makes $\trace(\Amat\widehat\Gammat_m)$ a more accurate estimate because fast convergence implies that  $\|\xvec_*-\xvec_{m+d}\|_\Amat^2\ll\|\xvec_*-\xvec_m\|_\Amat^2$, and this, along with \eref{Eq:StrakosTichy}, implies that $\trace(\Amat\widehat\Gammat_m)\approx\|\xvec_*-\xvec_m\|_\Amat^2$ \cite[Section 4]{StrakosTichy}.

\section{Approximate Krylov Posteriors}
\label{S:KrylovProperties}
We determine the error in approximate Krylov posteriors 
(Section~\ref{S:KrylovApprox}),
and interpret the Krylov prior as an empirical Bayesian method (Section~\ref{S:KrylovStat}).

\subsection{Error in Approximate Krylov Posteriors}
\label{S:KrylovApprox}
We review the $p$-Wasserstein distance
(Definition~\ref{D:Wasserstein}), extend the 2-Wasserstein distance 
to the $\Amat$-Wasserstein distance weighted by a symmetric positive definite 
matrix $\Amat$ (Theorem~\ref{L:WassersteinNorm}), 
and derive the $\Amat$-Wasserstein distance between approximate 
and full Krylov posteriors
(Theorem~\ref{T:Wasserstein}).

The $p$-Wasserstein distance is a metric on the set of probability distributions. 

\begin{definition}[{\cite[Definition 2.1]{KLMU20}, \cite[Definition 6.1]{Villani09}}]
  \label{D:Wasserstein}
The $p$-Wasser\--stein distance between probability 
  distributions $\mu$ and $\nu$ on $\Rn$ is
\begin{equation}
  \label{Eq:WassersteinInt}
  W_p(\mu,\nu) \equiv \left(\inf_{\pi\in\Pi(\mu,\nu)}\int_{\Rn\times\Rn}\|\Mrv-\Nrv\|_2^p \ d\pi(\Mrv,\Nrv)\right)^{1/p}, \quad p\geq 1,
\end{equation}
where $\Pi(\mu,\nu)$ is the set of couplings between $\mu$ and $\nu$, 
that is, the set of probability distributions on $\Rn\times\Rn$ that have 
$\mu$ and $\nu$ as marginal distributions.
\end{definition}

In the special case $p=2$, the $2$-Wasserstein or \textit{Fr\'{e}chet distance} between two Gaussian distributions admits an explicit expression.

\begin{lemma}[{\cite[Theorem 2.1]{Gelbrich90}}]
  \label{L:2Wasserstein}
  The 2-Wasserstein distance between Gaussian distributions
  $\mu \equiv \N(\xvec_\mu,\Sigmat_\mu)$ and $\nu\equiv\N(\xvec_\nu,\Sigmat_\nu)$
  on $\Rn$ is
  \begin{equation*}
    \left(W_2(\mu,\nu)\right)^2 = \|\xvec_\mu-\xvec_\nu\|_2^2 + 
    \trace\left(\Sigmat_\mu+\Sigmat_\nu - 2\left(\Sigmat_\mu^{1/2}\Sigmat_\nu\Sigmat_\mu^{1/2}\right)^{1/2}\right).
  \end{equation*}
\end{lemma}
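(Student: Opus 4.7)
The plan is to reduce the infimum over couplings in~\eqref{Eq:WassersteinInt} with $p=2$ to a finite-dimensional maximization over cross-covariance matrices, and then solve that maximization in closed form. I would represent any coupling $\pi\in\Pi(\mu,\nu)$ by jointly distributed random variables $(\Mrv,\Nrv)$ with $\Mrv\sim\mu$, $\Nrv\sim\nu$, and set $\mathbf{C}\equiv\Exp_\pi[(\Mrv-\xvec_\mu)(\Nrv-\xvec_\nu)^T]$. Expanding $\|\Mrv-\Nrv\|_2^2=\|\Mrv\|_2^2+\|\Nrv\|_2^2-2\Mrv^T\Nrv$ and centering the cross term gives
\begin{equation*}
\Exp_\pi\!\left[\|\Mrv-\Nrv\|_2^2\right] = \|\xvec_\mu-\xvec_\nu\|_2^2 + \trace(\Sigmat_\mu+\Sigmat_\nu) - 2\,\trace(\mathbf{C}),
\end{equation*}
so only $\trace(\mathbf{C})$ depends on $\pi$, and only through a second-moment matrix.

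The next step is to identify the feasible set of $\mathbf{C}$: a matrix in $\Rnn$ arises as such a cross-covariance if and only if the joint covariance
\begin{equation*}
\begin{pmatrix}\Sigmat_\mu & \mathbf{C}\\ \mathbf{C}^T & \Sigmat_\nu\end{pmatrix}
\end{equation*}
is positive semidefinite, in which case the associated Gaussian coupling already attains it; hence Gaussian couplings suffice. To maximize $\trace(\mathbf{C})$ under this PSD-block constraint I would invoke the Schur-complement characterization to parametrize $\mathbf{C}=\Sigmat_\mu^{1/2}\mathbf{K}\,\Sigmat_\nu^{1/2}$ with $\|\mathbf{K}\|_2\leq 1$. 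Trace cyclicity then yields $\trace(\mathbf{C})=\trace(\mathbf{K}\,\Sigmat_\nu^{1/2}\Sigmat_\mu^{1/2})$, and the variational identity $\max_{\|\mathbf{K}\|_2\leq 1}\trace(\mathbf{K}\mathbf{M})=\|\mathbf{M}\|_*$ identifies the maximum as $\|\Sigmat_\nu^{1/2}\Sigmat_\mu^{1/2}\|_*=\trace\bigl((\Sigmat_\mu^{1/2}\Sigmat_\nu\Sigmat_\mu^{1/2})^{1/2}\bigr)$. Substituting this into the reduced expression produces the claimed identity.

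The main obstacle I expect is the rigorous handling of singular $\Sigmat_\mu$ or $\Sigmat_\nu$: in that case the contractive parametrization does not surject onto the admissible cross-covariances, and the Schur-complement characterization must be written with Moore--Penrose pseudoinverses and range constraints. I would finesse this with a perturbation argument: apply the non-singular result to $\Sigmat_\mu+\epsilon\Imat,\Sigmat_\nu+\epsilon\Imat$ and pass $\epsilon\downarrow 0$, using continuity of the matrix square root on the PSD cone and lower semicontinuity of $W_2$ under weak convergence of the Gaussian marginals.
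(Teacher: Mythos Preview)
The paper does not prove this lemma; it is stated with a citation to \cite[Theorem~2.1]{Gelbrich90} and used as a black box in the proof of Theorem~\ref{L:WassersteinNorm}. There is therefore no proof in the paper to compare against.

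That said, your argument is correct and is essentially the standard route (as in Givens--Shortt, Dowson--Landau, Olkin--Pukelsheim): reduce the cost to a function of the cross-covariance $\mathbf{C}$ alone, identify the feasible set of $\mathbf{C}$ with the PSD block-matrix constraint, observe that Gaussian couplings realize every feasible $\mathbf{C}$, and then solve the trace maximization via the contractive parametrization and the nuclear-norm duality. The perturbation argument for singular covariances is the usual fix and works as you describe, though one should also verify that the optimal Gaussian coupling for the regularized problem converges weakly to a coupling of the limiting marginals, so that the $\liminf$ and $\limsup$ both match the limiting right-hand side; continuity of the formula in $(\Sigmat_\mu,\Sigmat_\nu)$ on the PSD cone handles the latter, and tightness of the regularized optimal couplings handles the former.
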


We generalize the 2-Wasserstein distance 
to the $\Amat$-Wasserstein distance weighted by a symmetric positive definite  matrix~$\Amat$.

 \begin{definition}
  \label{D:BWasserstein}
  The two-norm of $\xvec\in\Rn$ weighted by a symmetric positive definite  $\Amat\in\Rdd$ is
\begin{equation}
    \label{Eq:BNorm}
    \|\xvec\|_\Amat \equiv \|\Amat^{1/2}\xvec\|_2.
  \end{equation}
The $\Amat$-Wasserstein distance between Gaussian distributions
  $\mu \equiv \N(\xvec_\mu,\Sigmat_\mu)$ and $\nu\equiv\N(\xvec_\nu,\Sigmat_\nu)$
  on $\Rn$ is
   \begin{align}
    \label{Eq:L:WassersteinNorm:Exp}
    W_\Amat (\mu,\nu) \equiv \left(\inf_{\pi\in\Pi(\mu,\nu)}\int_{\Rn\times\Rn}\|\Mrv-\Nrv\|_\Amat^2 \ d\pi(\Mrv,\Nrv)\right)^{1/2},
  \end{align}
where $\Pi(\mu,\nu)$ is the set of couplings between $\mu$ and $\nu$.
\end{definition}

We derive an explicit expression for the $\Amat$-Wasserstein distance
analogous to the one for the 2-Wasserstein distance in \lref{L:2Wasserstein}.

\begin{theorem}
  \label{L:WassersteinNorm}
For symmetric positive definite $\Amat\in\Rdd$,
the $\Amat$-Wasserstein distance between Gaussian distributions 
$\mu \equiv \N(\xvec_\mu,\Sigmat_\mu)$ and 
$\nu \equiv \N(\xvec_\nu,\Sigmat_\nu)$ on $\Rn$ is
  \begin{align}
    \label{Eq:WassersteinNormA}
    \left(W_\Amat(\mu,\nu)\right)^2 &= \|\xvec_\mu-\xvec_\nu\|_\Amat^2 + \trace(\widetilde{\Sigmat}_\mu)+\trace(\widetilde{\Sigmat}_\nu) \notag \\
    & \qquad -2
    \trace\left((\widetilde{\Sigmat}_\mu^{1/2}\widetilde{\Sigmat}_\nu
 \widetilde{\Sigmat}_\mu^{1/2})^{1/2}\right),
  \end{align}
  where $\widetilde{\Sigmat}_{\mu}\equiv\Amat^{1/2}\Sigmat_\mu \Amat^{1/2}$
  and $\widetilde{\Sigmat}_{\nu}\equiv\Amat^{1/2}\Sigmat_\nu \Amat^{1/2}$.
\end{theorem}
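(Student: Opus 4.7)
The plan is to reduce the weighted $\Amat$-Wasserstein distance to the ordinary $2$-Wasserstein distance via the linear change of variables $\xvec \mapsto \Amat^{1/2}\xvec$, and then invoke Lemma~\ref{L:2Wasserstein}. Since $\Amat$ is symmetric positive definite, $\Amat^{1/2}$ is invertible, and the map $T: \Rd \times \Rd \to \Rd \times \Rd$ given by $T(\xvec,\yvec) = (\Amat^{1/2}\xvec,\Amat^{1/2}\yvec)$ is a linear bijection.

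First I would define the pushforward measures $\widetilde{\mu} \equiv (\Amat^{1/2})_\# \mu$ and $\widetilde{\nu} \equiv (\Amat^{1/2})_\# \nu$. Standard properties of Gaussians give $\widetilde{\mu} = \N(\Amat^{1/2}\xvec_\mu,\widetilde{\Sigmat}_\mu)$ and $\widetilde{\nu} = \N(\Amat^{1/2}\xvec_\nu,\widetilde{\Sigmat}_\nu)$, where $\widetilde{\Sigmat}_\mu$ and $\widetilde{\Sigmat}_\nu$ are as stated in the theorem. Next I would show that $T$ induces a bijection $\pi \mapsto T_\#\pi$ between $\Pi(\mu,\nu)$ and $\Pi(\widetilde{\mu},\widetilde{\nu})$: any coupling of $\mu,\nu$ pushes forward to a coupling of $\widetilde{\mu},\widetilde{\nu}$ (by checking marginals), and conversely $(T^{-1})_\#$ inverts this correspondence because $\Amat^{1/2}$ is invertible.

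The computational heart of the proof is the pointwise identity
\begin{equation*}
\|\Mrv-\Nrv\|_\Amat^2 \;=\; \|\Amat^{1/2}(\Mrv-\Nrv)\|_2^2 \;=\; \|\widetilde{\Mrv}-\widetilde{\Nrv}\|_2^2,
\end{equation*}
which follows directly from \eref{Eq:BNorm}. Combining this with the change-of-variables formula for pushforwards and the bijection above,
\begin{equation*}
\bigl(W_\Amat(\mu,\nu)\bigr)^2 \;=\; \inf_{\pi\in\Pi(\mu,\nu)}\!\int \|\Mrv-\Nrv\|_\Amat^2 \, d\pi \;=\; \inf_{\widetilde{\pi}\in\Pi(\widetilde{\mu},\widetilde{\nu})} \!\int \|\widetilde{\Mrv}-\widetilde{\Nrv}\|_2^2 \, d\widetilde{\pi} \;=\; \bigl(W_2(\widetilde{\mu},\widetilde{\nu})\bigr)^2.
\end{equation*}
I would then apply Lemma~\ref{L:2Wasserstein} to $\widetilde{\mu},\widetilde{\nu}$, noting that the mean term satisfies $\|\Amat^{1/2}\xvec_\mu - \Amat^{1/2}\xvec_\nu\|_2^2 = \|\xvec_\mu-\xvec_\nu\|_\Amat^2$, which yields exactly \eref{Eq:WassersteinNormA}.

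I do not expect any step to present serious difficulty: the change of variables is clean because $\Amat^{1/2}$ is invertible, and both the pushforward-of-Gaussian and bijection-of-couplings statements are standard. The only point warranting care is verifying that the infima in the two Wasserstein problems really do correspond under $T_\#$; this just requires observing that $\int f \circ T \, d\pi = \int f \, d(T_\#\pi)$ applied to $f(\xvec,\yvec) = \|\xvec-\yvec\|_2^2$, together with the bijection of coupling sets.
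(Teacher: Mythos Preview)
Your proposal is correct and follows essentially the same strategy as the paper: reduce $W_\Amat$ to $W_2$ via the linear map $\Amat^{1/2}$, identify the pushforwards as Gaussians with covariances $\widetilde{\Sigmat}_\mu,\widetilde{\Sigmat}_\nu$, and then invoke Lemma~\ref{L:2Wasserstein}. If anything, your explicit verification that $T_\#$ is a bijection between $\Pi(\mu,\nu)$ and $\Pi(\widetilde{\mu},\widetilde{\nu})$ is a point the paper's proof passes over rather quickly.
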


\begin{proof}
First express the $\Amat$-Wasserstein distance as a 2-Wasserstein distance, by
substituting \eref{Eq:BNorm} into \eref{Eq:L:WassersteinNorm:Exp},
  \begin{equation}
    \label{Eq:BWassersteinExp}
\left( W_\Amat(\mu,\nu)\right)^2 = \inf_{\pi\in\Pi(\mu,\nu)}\int_{\Rn\times\Rn}\|\Amat^{1/2}\Mrv - \Amat^{1/2}\Nrv\|_2^2 \ d\pi(\Mrv,\Nrv).
  \end{equation}
\lref{L:GaussStability} in Appendix~\ref{S:Aux} implies that
 $\Amat^{1/2}\Mrv$ and $\Amat^{1/2}\Nrv$ are 
again Gaussian random variables with respective means and covariances
  \begin{equation*}
    \tilde{\mu} \equiv\mathcal{N}(\Amat^{1/2}\xvec_\mu,\,
    \underbrace{\Amat^{1/2}\Sigmat_\mu\Amat^{1/2}}_{\widetilde{\Sigmat}_{\mu}}),\qquad 
    \tilde{\nu} \equiv \mathcal{N}(\Amat^{1/2}\xvec_\nu,\,\underbrace{\Amat^{1/2}\Sigmat_\nu\Amat^{1/2}}_{\widetilde{\Sigmat}_{\nu}}).
  \end{equation*}
  Thus \eref{Eq:BWassersteinExp} is equal to the 2-Wasserstein distance
  \begin{align}
    \label{Eq:B2Wasserstein}
\left(W_\Amat(\mu,\nu)\right)^2 = \inf_{\pi\in\Pi(\tilde{\mu},\tilde{\nu})}\int_{\Rn\times\Rn}\|\widetilde{\Mrv} - \widetilde{\Nrv}\|_2^2 \ d\pi(\widetilde{\Mrv},\widetilde{\Nrv})
    = \left(W_2(\tilde{\mu},\tilde{\nu})\right)^2.
  \end{align}
 At last, apply \lref{L:2Wasserstein} and the linearity of the trace.\qed
  \end{proof}

We are ready to derive the $\Amat$-Wasserstein distance between approximate and full Krylov posteriors.  


\begin{theorem}
  \label{T:Wasserstein}
  Let $\eta_m \equiv \N(\xvec_m,\Gammat_m)$ 
  be a Krylov posterior from Theorem~\ref{T:KrylovPrior}, and
 for some $d\geq 1$ let $\widehat{\eta}_m \equiv \N(\xvec_m,\widehat{\Gammat}_m)$ be a rank-$d$ approximation from Definition~\ref{D:KrylovApprox}.
The $\Amat$-Wasserstein distance between $\eta_m$ and $\widehat{\eta}_m$ is
  \begin{equation}
    W_\Amat(\eta_m,\widehat{\eta}_m) = \left( \sum_{i=m+d+1}^\kry \phi_i\right)^{1/2}.
  \end{equation}
\end{theorem}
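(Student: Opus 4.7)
The starting point is Theorem~\ref{L:WassersteinNorm} applied to $\mu=\eta_m$ and $\nu=\widehat{\eta}_m$. Since both Gaussians share the same mean $\xvec_m$, the term $\|\xvec_\mu-\xvec_\nu\|_\Amat^2$ drops out, and what remains is
\begin{equation*}
\bigl(W_\Amat(\eta_m,\widehat{\eta}_m)\bigr)^2 = \trace(\widetilde{\Gammat}_m)+\trace(\widetilde{\widehat{\Gammat}}_m)-2\trace\!\left(\bigl(\widetilde{\Gammat}_m^{1/2}\widetilde{\widehat{\Gammat}}_m\widetilde{\Gammat}_m^{1/2}\bigr)^{1/2}\right),
\end{equation*}
where $\widetilde{\Gammat}_m\equiv\Amat^{1/2}\Gammat_m\Amat^{1/2}$ and $\widetilde{\widehat{\Gammat}}_m\equiv\Amat^{1/2}\widehat{\Gammat}_m\Amat^{1/2}$. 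The whole proof then reduces to evaluating these three traces.

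The key structural observation I would exploit is that the columns of $\Umat\equiv\Amat^{1/2}\Vmat$ are orthonormal in the standard Euclidean sense, because $\Umat^T\Umat=\Vmat^T\Amat\Vmat=\Imat_g$ by Definition~\ref{D:KrylovPrior}. Substituting $\Gammat_m=\Vmat_{m+1:g}\Phimat_{m+1:g}\Vmat_{m+1:g}^T$ and $\widehat{\Gammat}_m=\Vmat_{m+1:m+d}\Phimat_{m+1:m+d}\Vmat_{m+1:m+d}^T$ yields
\begin{equation*}
\widetilde{\Gammat}_m = \Umat_{m+1:g}\Phimat_{m+1:g}\Umat_{m+1:g}^T, \qquad \widetilde{\widehat{\Gammat}}_m = \Umat_{m+1:m+d}\Phimat_{m+1:m+d}\Umat_{m+1:m+d}^T,
\end{equation*}
both of which are bona fide truncated spectral decompositions. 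The first two traces then follow from the cyclic property plus $\Umat^T\Umat=\Imat$:
\begin{equation*}
\trace(\widetilde{\Gammat}_m)=\sum_{i=m+1}^{g}\phi_i, \qquad \trace(\widetilde{\widehat{\Gammat}}_m)=\sum_{i=m+1}^{m+d}\phi_i.
\end{equation*}

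For the cross term I would first write $\widetilde{\Gammat}_m^{1/2}=\Umat_{m+1:g}\Phimat_{m+1:g}^{1/2}\Umat_{m+1:g}^T$ (valid because $\widetilde{\Gammat}_m$ is symmetric PSD with these orthonormal eigenvectors). Then I would collapse the product using the identity $\Umat_{m+1:g}^T\Umat_{m+1:m+d}=\bigl[\begin{smallmatrix}\Imat_d\\ \zerovec\end{smallmatrix}\bigr]$, which reflects the fact that $\Umat_{m+1:m+d}$ is the leading column block of $\Umat_{m+1:g}$. A direct calculation gives
\begin{equation*}
\widetilde{\Gammat}_m^{1/2}\widetilde{\widehat{\Gammat}}_m\widetilde{\Gammat}_m^{1/2} = \Umat_{m+1:g}\begin{bmatrix}\Phimat_{m+1:m+d}^2 & \zerovec \\ \zerovec & \zerovec\end{bmatrix}\Umat_{m+1:g}^T,
\end{equation*}
whose principal square root is obtained by replacing $\Phimat_{m+1:m+d}^2$ with $\Phimat_{m+1:m+d}$. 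Taking the trace and again using $\Umat^T\Umat=\Imat$ yields $\sum_{i=m+1}^{m+d}\phi_i$.

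Combining the three pieces, the sums for $i=m+1,\dots,m+d$ cancel exactly, leaving $\sum_{i=m+d+1}^{g}\phi_i$, as required. The only technical care needed is in the square-root step, where one must verify that the non-square factor $\Umat_{m+1:g}$ still commutes cleanly through the square root; this is the one place where an error is most likely, but it is resolved by the observation that a matrix of the form $\Umat D\Umat^T$ with $\Umat^T\Umat=\Imat$ and $D$ diagonal PSD is itself PSD, with principal square root $\Umat D^{1/2}\Umat^T$. Everything else is bookkeeping.
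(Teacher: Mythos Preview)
Your proof is correct and follows essentially the same approach as the paper: both use the $\Amat$-orthonormality of $\Vmat$ to make $\Amat^{1/2}\Vmat$ Euclidean-orthonormal, reduce the three traces to sums of the $\phi_i$, and observe the telescoping cancellation. The only cosmetic difference is that the paper first pads $\Vmat_{m+1:g}$ with an $\Amat$-orthogonal complement to obtain a \emph{square} orthogonal eigenvector matrix (invoking Lemma~\ref{L:BOrth}) before computing square roots, whereas you work directly with the tall orthonormal blocks and the fact that $\Umat D\Umat^T$ has principal square root $\Umat D^{1/2}\Umat^T$ when $\Umat^T\Umat=\Imat$; your shortcut is slightly more economical but the underlying mechanism is identical.
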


\begin{proof}
We factor the covariances into square factors, 
to obtain an eigenvalue decomposition for the congruence
transformations of the covariances in \eref{Eq:WassersteinNormA}.

Expand the column dimension of $\Vmat_{m+1:\kry}$ from $g-m$ to $n$ by adding
an $\Amat$-orthogonal complement
$\Vmat_m^\perp\in\Real^{n\times (\dm -\kry+m)}$ to create
 an $\Amat$-orthogonal matrix
  \begin{align*}
    \label{Eq:BlockFull}
    \widetilde{\Vmat} &\equiv \begin{bmatrix} \Vmat_{m+1:\kry} & \Vmat_m^\perp \end{bmatrix}\in\Rnn
\end{align*}    
with $\widetilde{\Vmat}^T\Amat\widetilde{\Vmat}=\Imat_n$.
Analogously
expand the dimension of the diagonal matrices by padding with trailing zeros,
\begin{align*}   
    \widetilde{\Phimat}_{m+1:\kry} &\equiv\diag\begin{pmatrix}\phi_{m+1}&\cdots&\phi_\kry &
\vzero_{1\times (n-g+m)}\end{pmatrix} \in\Rnn,  \notag \\
    \widetilde{\Phimat}_{m+1:m+d} &\equiv  \diag\begin{pmatrix}\phi_{m+1} &\cdots &\phi_{m+d} & \vzero_{1\times (n-d)}\end{pmatrix} \in\Rnn.
  \end{align*}
  Factor the covariances in terms of the above square matrices,
  \begin{equation*}
    \label{Eq:BlockCov}
    \Gammat_m = \widetilde{\Vmat}\widetilde{\Phimat}_{m+1:\kry}\widetilde{\Vmat}^T \quad \text{and} \quad \widehat{\Gammat}_m = \widetilde{\Vmat}\widetilde{\Phimat}_{m+1:m+d}\widetilde{\Vmat}^T.
  \end{equation*}
  Substitute the factorizations into \eref{Eq:WassersteinNormA}, and compute the $\Amat$-Wasserstein distance between $\eta_m$ and~$\widehat{\eta}_m$ as
  \begin{equation}
    \label{Eq:WassersteinApprox}
   \left( W_\Amat(\eta_m,\widehat{\eta}_m) \right)^2= \trace(\Gmat) + \trace(\Jmat)
    -2\trace\left((\Gmat^{1/2}\,\Jmat \,\Gmat^{1/2})^{1/2}\right),
  \end{equation}
where the congruence transformations of 
  $\Gammat_m$ and $\widehat{\Gammat}_m$
  are again Hermitian,
  \begin{align*}
    \label{Eq:GJ}
    \Gmat &\equiv\Amat^{1/2}\underbrace{\widetilde{\Vmat}\widetilde{\Phimat}_{m+1:\kry}\widetilde{\Vmat}^T}_{\Gammat_m}\Amat^{1/2}=\Umat\widetilde{\Phimat}_{m+1:\kry}\Umat^T,\qquad \Umat\equiv \Amat^{1/2}\widetilde{\Vmat}\\
    \Jmat &\equiv \Amat^{1/2}\underbrace{\widetilde{\Vmat}\widetilde{\Phimat}_{m+1:m+d}\widetilde{\Vmat}^T}_{\widehat{\Gammat}_m}\Amat^{1/2}=\Umat\widetilde{\Phimat}_{m+1:d}\Umat^T.
  \end{align*}
\lref{L:BOrth} implies that $\Umat$ is an orthogonal matrix, so that the second factorizations of $\Gmat$ and $\Jmat$  represent eigenvalue decompositions. Commutativity
of the trace implies
\begin{align*}
\trace(\Gmat)&=\trace(\widetilde{\Phimat}_{m+1:\kry})=\sum_{i=m+1}^g{\phi_{i}}\\
\trace(\Jmat)&=\trace(\widetilde{\Phimat}_{m+1:m+d})=\sum_{i=m+1}^{m+d}{\phi_{i}}.
\end{align*}
Since $\Gmat$ and $\Jmat$ 
have the same eigenvector matrix, they commute, and so do diagonal matrices,
\begin{align*}
\Gmat^{1/2}\Jmat\Gmat^{1/2}&=
\Umat\widetilde{\Phimat}_{m+1:g}
\widetilde{\Phimat}_{m+1:m+d}\Umat^T\\
&=\Umat\diag\begin{pmatrix}\phi_{m+1}^2& \cdots \phi_{m+d}^2 & \vzero_{1\times (n-d)}\end{pmatrix}\Umat^T
   \end{align*}
where the last equality follows from the fact that $\widetilde{\Phimat}_{m+1:g}$ and 
$\widetilde{\Phimat}_{m+1:m+d}$
share the leading $d$ diagonal elements. Thus
\begin{align*}
   \trace\left((\Gmat^{1/2}\,\Jmat \,\Gmat^{1/2})^{1/2}\right)=
   \sum_{i=m+1}^{m+d} \phi_i.
\end{align*}
Substituting the above expressions
into \eref{Eq:WassersteinApprox} gives
  \begin{align*}
\left( W_\Amat(\eta_m,\widehat{\eta}_m)\right)^2 = \sum_{i=m+1}^{\kry}\phi_i + \sum_{i=m+1}^{m+d} \phi_i- 2 \sum_{i=m+1}^{m+d} \phi_i=
\sum_{i=m+d+1}^{g}{\phi_i}.
  \end{align*}\qed
  \end{proof}

\tref{T:Wasserstein} implies that the $\Amat$-Wasserstein distance between 
approximate and full Krylov posteriors is the sum of the CG steps sizes skipped by the approximate posterior, and this, as seen in \eref{Eq:StrakosTichy} and \cite[Equation (4.4)]{StrakosTichy}, is equal to the distance between the error estimate $\trace(\Amat\widehat\Gammat_m)$ and the true error $\|\xvec_*-\xvec_m\|_\Amat^2$. As a consequence, the approximation error decreases as the convergence of the posterior mean accelerates, or the rank~$d$ of the approximation increases.

\begin{remark}
The distance in
\tref{T:Wasserstein} is a special case of the 2-Wasserstein distance between two distributions whose covariance matrices commute \cite[Corollary 2.4]{KLMU20}.

To see this, consider the $\Amat$-Wasserstein distance between $\eta_m$ and $\widehat{\eta}_m$ from Theorem~\ref{T:Wasserstein}, and the 2-Wasserstein distance between $\nu_m \equiv \N(\xvec_m,\Amat^{1/2}\Gammat\Amat^{1/2})$ and $\widehat\nu_m \equiv  \N(\xvec_m,\Amat^{1/2}\widehat\Gammat\Amat^{1/2})$.
Then \eref{Eq:B2Wasserstein} implies that the $\Amat$-Wasserstein
distance is equal to the 2-Wassterstein distance of a congruence transformation,
\begin{equation*}
  W_\Amat(\eta_m,\widehat\eta_m) = W_2(\nu_m,\widehat\nu_m).
\end{equation*}
The covariance matrices $\Amat^{1/2}\Gammat_m\Amat^{1/2}$ and 
$\Amat^{1/2}\widehat{\Gammat}_m\Amat^{1/2}$ associated with the 2-Wasserstein distance 
commute because they are both diagonalized by the same orthogonal matrix $\Amat^{1/2}\widetilde\Vmat$.
\end{remark}

\subsection{Probabilistic  Interpretation of the Krylov Prior}
\label{S:KrylovStat}

We interpret the Krylov prior as an `empirical Bayesian procedure' 
(Theorem~\ref{T:KrylovEmp}), and elucidate the connection between the
random variables and the deterministic solution (Remark~\ref{R:DetRandom}).

An \textit{empirical Bayesian procedure} estimates the prior from data \cite[Section 4.5]{Berger1985}.
Our `data' are the pairs 
of normalized search directions $\vvec_i$ and step sizes~$\phi_i$,
$1\leq i\leq m+d$, from $m+d$ iterations of CG.
In contrast, the usual data for BayesCG are the inner products $\vvec_i^T \bvec$, $1\leq i \leq m$.
However, if we augment the usual data with the search directions, which is natural due to their dependence on $\xvec_*$, then $\phi_i$ is just a function of the data.

From these data we construct a prior in an empirical Bayesian fashion,
starting with a random variable 
\begin{equation*}
  \Xrv = \xvec_0 + \sum_{i=1}^{m+d}  \sqrt{\phi_i} \vvec_i \Qrv_i\in\Rn,
\end{equation*}
where $\Qrv_i \sim \N(0,1)$ are independent and identically distributed scalar Gaussian random variables, $1\leq i\leq m+d$. Due to the independence of the $\Qrv_i$, the above sum is the matrix vector product
\begin{equation}
  \label{Eq:RandEmp}
  \Xrv = \xvec_0 + \Vmat_{1:m+d}\, \Phimat_{1:m+d}^{1/2}\, \Qrv
\end{equation} 
where $\Qrv \sim \mathcal{N}(\zerovec, \Imat_{m+d})$ is a vector-valued Gaussian random variable. 

The distribution of $\Xrv$ is the \textit{empirical prior}, while the distribution of $\Xrv$ conditioned on the random variable $\Yrv \equiv \Vmat_{1:m}^T\Amat\Xrv$ taking the value $\Vmat_{1:m}^T\bvec$ is the \textit{empirical posterior}. We relate
these distributions to the Krylov prior.

\begin{theorem}
\label{T:KrylovEmp}
Under the assumptions of Theorem~\ref{T:KrylovPrior}, the
random variable $\Xrv$ in \eref{Eq:RandEmp} is 
distributed according to the empirical prior
  \begin{equation*}
  \N \left(\xvec_0, \Vmat_{1:m+d}\Phimat_{1:m+d} \Vmat_{1:m+d}^T\right),
\end{equation*}
which is the rank-$(m+d)$ approximation of the Krylov prior $\Gammat_0$. The variable $\Xrv$ conditioned on $\Yrv \equiv \Vmat_{1:m}^T\Amat\Xrv$ taking the value $\Vmat_{1:m}^T\bvec$ is distributed according to the empirical posterior
\begin{equation*}
  \N \left(\xvec_m, \Vmat_{m+1:m+d}\Phimat_{m+1:m+d} \Vmat_{m+1:m+d}^T\right)=\N\left(\xvec_m,\widehat{\Gammat}_m\right),
\end{equation*}
which, in turn, is the rank-$d$ approximation of the Krylov posterior.
\end{theorem}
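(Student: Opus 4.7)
The plan is to verify both claims by direct computation on the affine Gaussian in \eref{Eq:RandEmp}, using the $\Amat$-orthonormality of the columns of $\Vmat$ at every step.

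\textbf{Empirical prior.} The variable $\Xrv = \xvec_0 + \Vmat_{1:m+d}\Phimat_{1:m+d}^{1/2}\Qrv$ is an affine transformation of $\Qrv \sim \N(\zerovec,\Imat_{m+d})$, so by Lemma~\ref{L:GaussStability} it is Gaussian with mean $\xvec_0$ and covariance $\Vmat_{1:m+d}\Phimat_{1:m+d}\Vmat_{1:m+d}^T$. Writing $\Gammat_0 = \Vmat\Phimat\Vmat^T = \sum_{i=1}^{g}\phi_i \vvec_i\vvec_i^T$, this covariance is obtained by retaining only the leading $m+d$ rank-one summands, which is the rank-$(m+d)$ prior analogue of the truncation scheme in Definition~\ref{D:KrylovApprox}.

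\textbf{Empirical posterior.} First I would express $\Yrv = \Vmat_{1:m}^T\Amat\Xrv$ as an affine function of $\Qrv$. From $\Vmat^T\Amat\Vmat = \Imat_g$ in Definition~\ref{D:KrylovPrior} it follows that $\Vmat_{1:m}^T\Amat\Vmat_{1:m+d} = \begin{bmatrix}\Imat_m & \vzero_{m\times d}\end{bmatrix}$, so
\begin{equation*}
\Yrv = \Vmat_{1:m}^T\Amat\xvec_0 + \Phimat_{1:m}^{1/2}\Qrv_{1:m},
\end{equation*}
where $\Qrv_{1:m}$ denotes the first $m$ coordinates of $\Qrv$. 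Because the coordinates of $\Qrv$ are independent, conditioning on $\Yrv = \Vmat_{1:m}^T\bvec$ pins down $\Qrv_{1:m} = \Phimat_{1:m}^{-1/2}\Vmat_{1:m}^T\rvec_0$ while leaving $\Qrv_{m+1:m+d}\sim\N(\zerovec,\Imat_d)$ independent and unaltered. Substituting back into \eref{Eq:RandEmp}, the first $m$ now-deterministic coordinates collapse to the conditional mean
\begin{equation*}
\xvec_0 + \Vmat_{1:m}\Phimat_{1:m}^{1/2}\Phimat_{1:m}^{-1/2}\Vmat_{1:m}^T\rvec_0 = \xvec_0 + \Vmat_{1:m}\Vmat_{1:m}^T\rvec_0 = \xvec_m
\end{equation*}
by Theorem~\ref{T:KrylovPrior}, while the remaining $d$ random coordinates supply the conditional covariance $\Vmat_{m+1:m+d}\Phimat_{m+1:m+d}\Vmat_{m+1:m+d}^T = \widehat{\Gammat}_m$ from Definition~\ref{D:KrylovApprox}.

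\textbf{Main obstacle.} The derivation is mostly bookkeeping; the single substantive ingredient is $\Amat$-orthonormality, which simultaneously decouples the $\Qrv$-coordinates that enter $\Yrv$ from those that govern the residual uncertainty, and reduces $\operatorname{Var}(\Yrv)$ to the diagonal matrix $\Phimat_{1:m}$. An equivalent route would apply the standard Gaussian conditioning formulas to the joint distribution of $(\Xrv,\Yrv)$, where the simplifications $\operatorname{Cov}(\Xrv,\Yrv) = \Vmat_{1:m}\Phimat_{1:m}$ and $\operatorname{Var}(\Yrv) = \Phimat_{1:m}$ arise from the same $\Amat$-orthonormality.
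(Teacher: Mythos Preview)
Your proof is correct. The prior part is identical to the paper's. For the posterior, the paper instead computes the joint distribution of $(\Xrv,\Yrv)$, evaluates $\Cov(\Yrv,\Yrv)=\Phimat_{1:m}$ and $\Cov(\Xrv,\Yrv)=\Vmat_{1:m}\Phimat_{1:m}$ from $\Amat$-orthonormality, and then applies the standard Gaussian conditioning formula (Lemma~\ref{L:GaussConjugacy}); this is exactly the ``equivalent route'' you describe in your final paragraph. Your argument is more elementary: by writing $\Yrv$ as a function of $\Qrv_{1:m}$ alone, you exploit the independence of the $\Qrv$-coordinates to perform the conditioning by direct substitution, bypassing the covariance bookkeeping entirely. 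The paper's route has the virtue of paralleling the general BayesCG derivation in \cite[Proposition~1]{BCG:Supp}, while yours makes more transparent \emph{why} the conditioning decouples so cleanly in this special case.
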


\begin{proof}
As in the proof of \tref{T:BayesCG} in \cite[Proof of Proposition 1]{BCG:Supp}, we exploit the stability and conjugacy of Gaussian distributions in Lemmas \ref{L:GaussStability} and~\ref{L:GaussConjugacy} in Appendix~\ref{S:Aux}.
  
  \paragraph{Prior.}
\lref{L:GaussStability} implies that $\Xrv$ in \eref{Eq:RandEmp} is a Gaussian random variable with mean and covariance
  \begin{equation}\label{e_33a}
    \Xrv \sim \mathcal{N}
    \left(\xvec_0, \Vmat_{1:m+d}\Phimat_{1:m+d} \Vmat_{1:m+d}^T\right).
  \end{equation}
Thus, the approximate Krylov prior is an empirical Bayesian prior.
  
 \paragraph{Posterior.}
 From (\ref{e_33a}) follows that 
$\Xrv$ and $\Yrv\equiv\Vmat_{1:m}^T\Amat X$ have the joint distribution
  \begin{equation}
    \label{Eq:JointDist}
    \begin{bmatrix} \Xrv \\ \Yrv \end{bmatrix} \sim \N \left(\begin{bmatrix} \xvec_0 \\ \Exp[\Yrv] \end{bmatrix},\begin{bmatrix} \Vmat_{1:m+d}\Phimat_{1:m+d}\Vmat_{1:m+d}^T & \Cov(\Xrv,\Yrv) \\ \Cov(\Xrv,\Yrv)^T & \Cov(\Yrv,\Yrv) \end{bmatrix}\right)
  \end{equation}
and that $\Exp[\Yrv] = \Vmat_{1:m}^T\Amat\xvec_0$.
This, together with the linearity 
of the expectation and the $\Amat$-orthonormality of $\Vmat$ implies 
  \begin{align*}
    \Cov(\Yrv,\Yrv) &=
 \Exp\left[(\Yrv-\Exp[\Yrv])(\Yrv-\Exp[\Yrv])^T\right]\\
& =\Vmat_{1:m}^T\Amat\,\Exp\left[(\Xrv -\xvec_0)
 (\Xrv - \xvec_0)^T\right]\Amat\Vmat_{1:m}\\  
 &=\Vmat_{1:m}^T\Amat\>\left(\Vmat_{1:m+d}\Phimat_{1:m+d}\Vmat_{1:m+d}^T\right)\>\Amat\Vmat_{1:m}\\
 &= \begin{bmatrix}\Imat_m&\vzero\end{bmatrix}
 \Phimat_{1:m+d}\begin{bmatrix}\Imat_m\\ \vzero\end{bmatrix}=
 \Phimat_{1:m}.
  \end{align*}
 Analogously,
  \begin{align*}
    \Cov(\Xrv,\Yrv) &= \Exp[(\Xrv-\xvec_0)(\Yrv - \Exp[\Yrv])^T]
    = \Exp[(\Xrv-\xvec_0)(\Yrv - \Vmat_{1:m}^T\Amat\xvec_0)^T]  \\
 &=  \Exp[(\Xrv-\xvec_0)(\Xrv-\xvec_0)^T]\Amat\Vmat_{1:m}
=    \Vmat_{1:m+d}\Phimat_{1:m+d}\Vmat_{1:m+d}^T\Amat\Vmat_{1:m}\\
&=\Vmat_{1:m+d}\Phimat_{1:m+d}\begin{bmatrix}\Imat_m& \vzero\end{bmatrix}
=\Vmat_{1:m}\Phimat_{1:m}.
  \end{align*}
From \cite[Theorem 6.20]{Stuart:BayesInverse} follows 
 the expression for the posterior mean,
\begin{align*}
\xvec_m &= 
\xvec_0+\Cov(\Xrv,\Yrv)\Cov(\Yrv,\Yrv)^{-1}\left(\Vmat_{1:m}^T\bvec-\Vmat_{1:m}^T\Amat\xvec_0\right)\\
&=\xvec_0 + \Vmat_{1:m}\Phimat_{1:m}\Phimat_{1:m}^{-1}\Vmat_{1:m}^T\rvec_0 = \xvec_0+\Vmat_{1:m}\Vmat_{1:m}^T\rvec_0,
  \end{align*}
  and for the posterior covariance 
  \begin{align*}
\widehat\Gammat_m &= \Vmat_{1:m+d}\Phimat_{1:m+d}\Vmat_{1:m+d}^T
-\Cov(\Xrv,\Yrv)\Cov(\Yrv,\Yrv)^{-1}\Cov(\Xrv,\Yrv)^T,
\end{align*}
where
\begin{align*}
\Cov(\Xrv,\Yrv)\Cov(\Yrv,\Yrv)^{-1}\Cov(\Xrv,\Yrv)^T&=
\Vmat_{1:m}\Phimat_{1:m}\Phimat_{1:m}^{-1}\Phimat_{1:m}\Vmat_{1:m}^T\\
&=\Vmat_{1:m}\Phimat_{1:m}\Vmat_{1:m}^T.
\end{align*}
Substituting this into $\widehat{\Gammat}_m$ gives the expression for the posterior covariance
\begin{align*}
\widehat{\Gammat}_m
&=\Vmat_{1:m+d}\Phimat_{1:m+d}\Vmat_{1:m+d}^T-\Vmat_{1:m}\Phimat_{1:m}\Vmat_{1:m}^T\\
&= \Vmat_{m+1:m+d}\Phimat_{m+1:m+d}\Vmat_{m+1:m+d}^T.
  \end{align*}
  Thus, the posterior mean $\xvec_m$ is equal to the one in 
Theorem~\ref{T:KrylovPrior}, and the posterior covariance
$\widehat{\Gammat}_m$ 
is equal to the rank-$d$ approximate Krylov posterior in Definition~\ref{D:KrylovApprox}.\qed
\end{proof}

\begin{remark}
\label{R:DetRandom}
The random variable $\Xrv$ in Theorem~\ref{T:KrylovEmp}
is a surrogate for the unknown solution $\xvec_*$. The solution $\xvec_*$ is a deterministic quantity, but prior to  solving the linear system (\ref{Eq:Axb}),
we are uncertain of $\xvec_*$, and the prior models this uncertainty. 

During the course of the BayesCG iterations, we acquire information about~$\xvec_*$, and the posterior distributions $\mu_m$, $1\leq m \leq n$ incorporate our increasing knowledge and, consequently, our
diminishing uncertainty. 
\end{remark}

\section{Calibration of BayesCG Under the Krylov Prior}
\label{S:Calibration}

We review the notion of calibration for probabilistic solvers,
and show that this notion does not apply to BayesCG under the Krylov prior (\sref{S:CalibrationDef}). 
Then we relax this notion and analyze BayesCG with two
test statistics that are necessary but not sufficient for calibration: the $Z$-statistic (\sref{S:UQZ}) and the $S$-statistic (\sref{S:UQS}).

\subsection{Calibration}
\label{S:CalibrationDef}
We review the definition of calibration for probabilistic linear solvers (Definition \ref{D:StrongCalibration}, Lemma~\ref{L:CalibrationAlt}),
discuss the difference between certain random variables (Remark~\ref{r_rv}),
present two illustrations (Examples \ref{Ex:Story} and~\ref{Ex:Pictures}), 
and explain why this notion
of calibration does not apply to BayesCG under the Krylov prior
(Remark~\ref{r_41}).

Informally, a probabilistic numerical solver is calibrated if its posterior 
distributions accurately model the uncertainty in the solution \cite{CGOS21,CIOR20}. 

\begin{definition}[{\cite[Definition 6]{CIOR20}}]
  \label{D:StrongCalibration}
Let $\Amat\Xrv_* = \Brv$ be a class of linear systems where $\Amat\in\Real^{n\times n}$ is symmetric positive definite,
and the random right hand sides $\Brv\in\Real^n$ are defined by random solutions 
  $\Xrv_*\sim \mu_0 \equiv \N(\xvec_0,\Sigmat_0)$.
 
  Assume that a probabilistic linear solver under the prior $\mu_0$ and applied to a system  $\Amat\Xrv_*=\Brv$  computes posteriors $\mu_m  \equiv \N(\xvec_m,\Sigmat_m)$, $1 \leq m \leq n$. 
    Let $\rank(\Sigmat_m)=p_m$, 
  and let $\Sigmat_m$ have an orthogonal eigenvector matrix
  $\Umat=\begin{bmatrix}\Umat_m&\Umat_m^{\perp}\end{bmatrix}\in\Real^{n\times n}$
  where
    $\Umat_m\in\Real^{n\times p_m}$ and 
 $\Umat_m^{\perp}\in\Real^{n \times (n-p_m)}$ satisfy
  \begin{equation*}
    \range(\Umat_m) = \range(\Sigmat_m), \qquad \quad \range(\Umat_m^{\perp}) = \ker(\Sigmat_m).
  \end{equation*}
The probabilistic solver is \textit{calibrated}
if all  posterior covariances $\Sigmat_m$  are independent of $\Brv$ and satisfy
  \begin{align} \label{Eq:StrongCalibration}
    \begin{split}
    (\Umat_m^T\Sigmat_m\Umat_m)^{-1/2}\Umat_m^T(\Xrv_*-\xvec_m) &\sim \N(\zerovec, \Imat_{p_m}),\\
    (\Umat_m^{\perp})^T(\Xrv_*-\xvec_m) &= \zerovec, \qquad 1\leq m\leq n.
    \end{split}
  \end{align}
\end{definition}

Alternatively, one can think of a probabilistic linear solver as calibrated if and only if the solutions $\Xrv_*$ are distributed according to the posteriors.

\begin{lemma}
  \label{L:CalibrationAlt}
Under the conditions of \dref{D:StrongCalibration}, a probabilistic linear solver is calibrated, if and only if
 \begin{equation*}
\Xrv_*\sim\N(\xvec_m,\Sigmat_m), \qquad 1\leq m\leq n.
\end{equation*}
\end{lemma}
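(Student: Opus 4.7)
The approach is to reduce both directions to elementary Gaussian computations in the eigenbasis of $\Sigmat_m$ that is built into Definition~\ref{D:StrongCalibration}. The key structural fact I will use is that, because $\Umat=\begin{bmatrix}\Umat_m&\Umat_m^\perp\end{bmatrix}$ is an orthogonal eigenvector matrix of $\Sigmat_m$ whose blocks span the range and kernel respectively, the spectral decomposition collapses to
\begin{equation*}
\Sigmat_m=\Umat_m\left(\Umat_m^T\Sigmat_m\Umat_m\right)\Umat_m^T,\qquad (\Umat_m^\perp)^T\Sigmat_m\Umat_m^\perp=\vzero,
\end{equation*}
with the $p_m\times p_m$ block $\Umat_m^T\Sigmat_m\Umat_m$ symmetric positive definite and hence invertible. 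I then split the error vector $\Xrv_*-\xvec_m$ as
\begin{equation*}
\Xrv_*-\xvec_m=\Umat_m\Umat_m^T(\Xrv_*-\xvec_m)+\Umat_m^\perp(\Umat_m^\perp)^T(\Xrv_*-\xvec_m)
\end{equation*}
using orthogonality of $\Umat$, and check that each piece matches in both directions.

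For the forward direction I would assume the two conditions in (\ref{Eq:StrongCalibration}): the second eliminates the $\Umat_m^\perp$ component of the split, while the first identifies the $\Umat_m$ component with $(\Umat_m^T\Sigmat_m\Umat_m)^{1/2}$ applied to a standard normal vector. Reassembling and invoking Gaussian stability (Lemma~\ref{L:GaussStability}) then shows $\Xrv_*-\xvec_m\sim\N(\vzero,\Sigmat_m)$, which is the desired identity. For the converse I would invert this argument: starting from $\Xrv_*-\xvec_m\sim\N(\vzero,\Sigmat_m)$, push forward by $\Umat_m^T$ and by $(\Umat_m^\perp)^T$ separately; the second push-forward is a centered Gaussian with zero covariance and is therefore almost surely $\vzero$ (giving the second line of (\ref{Eq:StrongCalibration})), while the first is a centered Gaussian with invertible covariance $\Umat_m^T\Sigmat_m\Umat_m$, so premultiplication by its inverse square root yields the standard normal vector required by the first line.

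The main subtlety I anticipate is the interpretation of the identity $\Xrv_*\sim\N(\xvec_m,\Sigmat_m)$ when $\xvec_m$ is a data-dependent random quantity but $\Sigmat_m$ is required by Definition~\ref{D:StrongCalibration} to be independent of $\Brv$: the identity must be read conditionally on the information used by the solver through iteration $m$, under which $\xvec_m$ is deterministic and the independence of $\Sigmat_m$ ensures the covariance is unaffected by conditioning. Once this reading is made explicit, the proof uses only the spectral structure above and Lemma~\ref{L:GaussStability}, and I do not expect any genuinely hard step.
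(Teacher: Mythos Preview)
Your proposal is correct and follows essentially the same route as the paper: work in the eigenbasis $\Umat=\begin{bmatrix}\Umat_m&\Umat_m^\perp\end{bmatrix}$ of $\Sigmat_m$, use Gaussian stability to pass back and forth between $(\Xrv_*-\xvec_m)$ and its range/kernel components, and identify the block covariance with $\diag(\Umat_m^T\Sigmat_m\Umat_m,\vzero)$. The paper's proof writes only the forward chain of transformations and leaves the converse implicit via reversibility, whereas you spell out both directions and flag the conditional reading of $\xvec_m$; these are welcome clarifications but not a different argument.
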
  

\begin{proof}
Let $\Sigmat_m=\Umat\Dmat\Umat^T$ be an eigendecomposition where the eigenvalue
matrix
$\Dmat = \diag\begin{pmatrix} \Dmat_m & \zerovec \end{pmatrix}$
is commensurately partitioned with  $\Umat$ in Definition~\ref{D:StrongCalibration}.
Multiply the first equation of \eref{Eq:StrongCalibration} on the left by $\Dmat_m = \Umat_m^T\Sigmat_m\Umat_m$,
  \begin{equation*}
    \Umat_m^T(\Xrv_*-\xvec_m) \sim \N(\zerovec, \Dmat_m),
  \end{equation*}
combine the result with the second equation in~\eref{Eq:StrongCalibration},
  \begin{equation*}
    \Umat^T(\Xrv_*-\xvec_m) \sim \N\left(\zerovec,\Dmat \right).
  \end{equation*}
and multiply by $\Umat$ on the left,
  \begin{equation*}
    (\Xrv_*-\xvec_m) \sim \N(\zerovec, \Umat\Dmat\Umat^T), \qquad 1\leq m\leq n.
  \end{equation*}
At last, substitute $\Sigmat_m = \Umat\Dmat\Umat^T$ and subtract $\xvec_m$.
  \qed
\end{proof}

Since the covariance matrix $\Sigmat_m$  is singular, its
probability density function is zero on the subspace of $\Real^n$
where the solver has eliminated the uncertainty about $\Xrv_*$. From (\ref{Eq:StrongCalibration}) follows that $\Xrv_* = \xvec_m$ in $\ker(\Sigmat_m)$.
Hence, this subspace must be $\ker(\Sigmat_m)$, and
any remaining uncertainty about $\Xrv_*$ lies in $\range(\Sigmat_m)$.

\begin{remark}\label{r_rv}
We discuss the difference between the random variable $\Xrv_*$ in 
 \dref{D:StrongCalibration} and the random variable $\Xrv$ in \tref{T:KrylovEmp}.

In the context of calibration, the random variable $\Xrv_*\sim\mu_0$ 
represents the set of \textit{all possible} solutions that are accurately modeled by the 
prior $\mu_0$. If the solver is calibrated, then \lref{L:CalibrationAlt} shows that $\Xrv_*\sim\mu_m$. Thus, solutions accurately modeled by the prior~$\mu_0$ are also accurately modeled by all posteriors~$\mu_m$.

By contrast, in the context of a deterministic linear system 
$\Amat\xvec_* = \bvec$,
the random variable $\Xrv$ represents a surrogate for the \textit{particular} solution $\xvec_*$ and can be viewed as an abbreviation for $\Xrv \mid \Xrv_* = \xvec_*$.
The prior $\mu_0$ models the uncertainty in the user's initial knowledge of $\xvec_*$, and the posteriors $\mu_m$ model the uncertainty remaining after $m$ iterations of the solver.
\end{remark}

The following two examples illustrate \dref{D:StrongCalibration}.

\begin{example}
  \label{Ex:Story}
  Suppose there are three people: Alice, Bob, and Carol.
  \begin{enumerate}
\item Alice samples $\xvec_*$ from the prior $\mu_0$ and computes 
the matrix vector product $\bvec = \Amat\xvec_*$.  
\item Bob receives $\mu_0$, $\bvec$, and $\Amat$ from Alice.
He estimates $\xvec_*$ by solving the linear system with a probabilistic solver under the prior $\mu_0$,
and then samples $\yvec$ from a posterior $\mu_m$. 
\item Carol receives $\mu_m$, $\xvec_*$ and $\yvec$, but she is not told which vector is $\xvec_*$ and which is $\yvec$. Carol then attempts to determine which one of $\xvec_*$ or $\yvec$ is the sample from $\mu_m$. If Carol cannot 
distinguish between $\xvec_*$ and $\yvec$, then the solver is calibrated.
   \end{enumerate}
\end{example}  

\begin{figure}
  \centering
  \includegraphics[scale = .35]{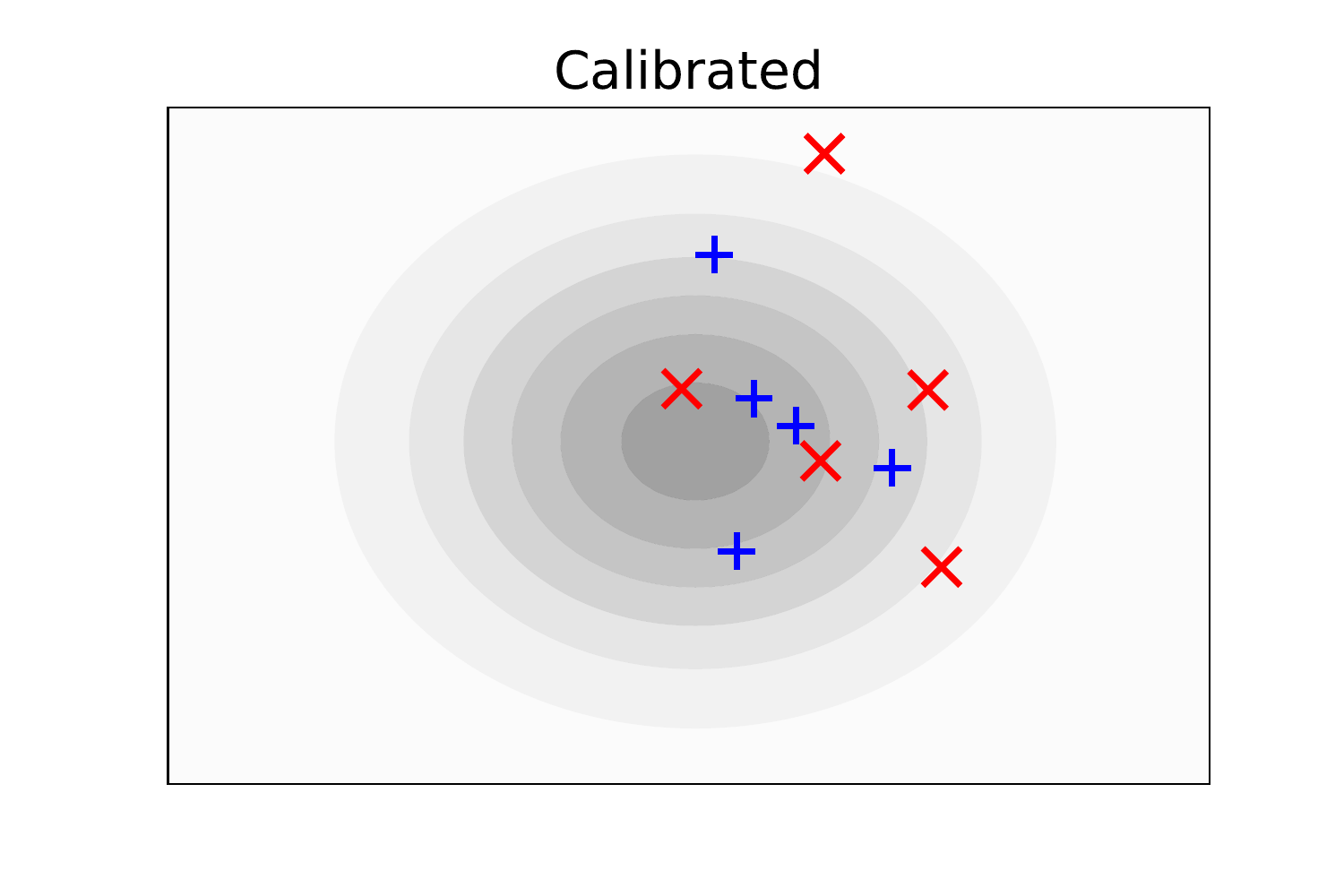} \\
  \includegraphics[scale = .35]{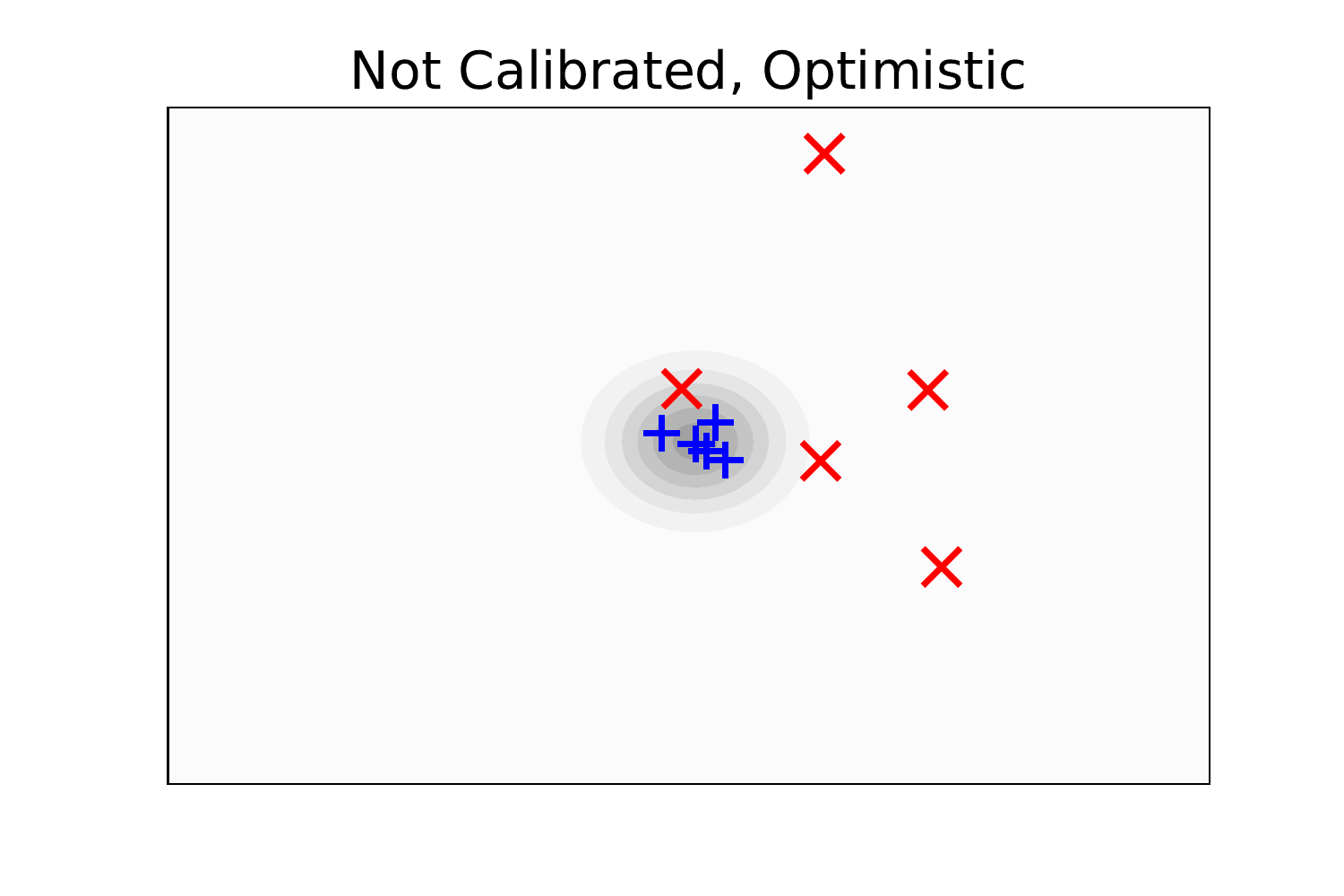}
  \includegraphics[scale = .35]{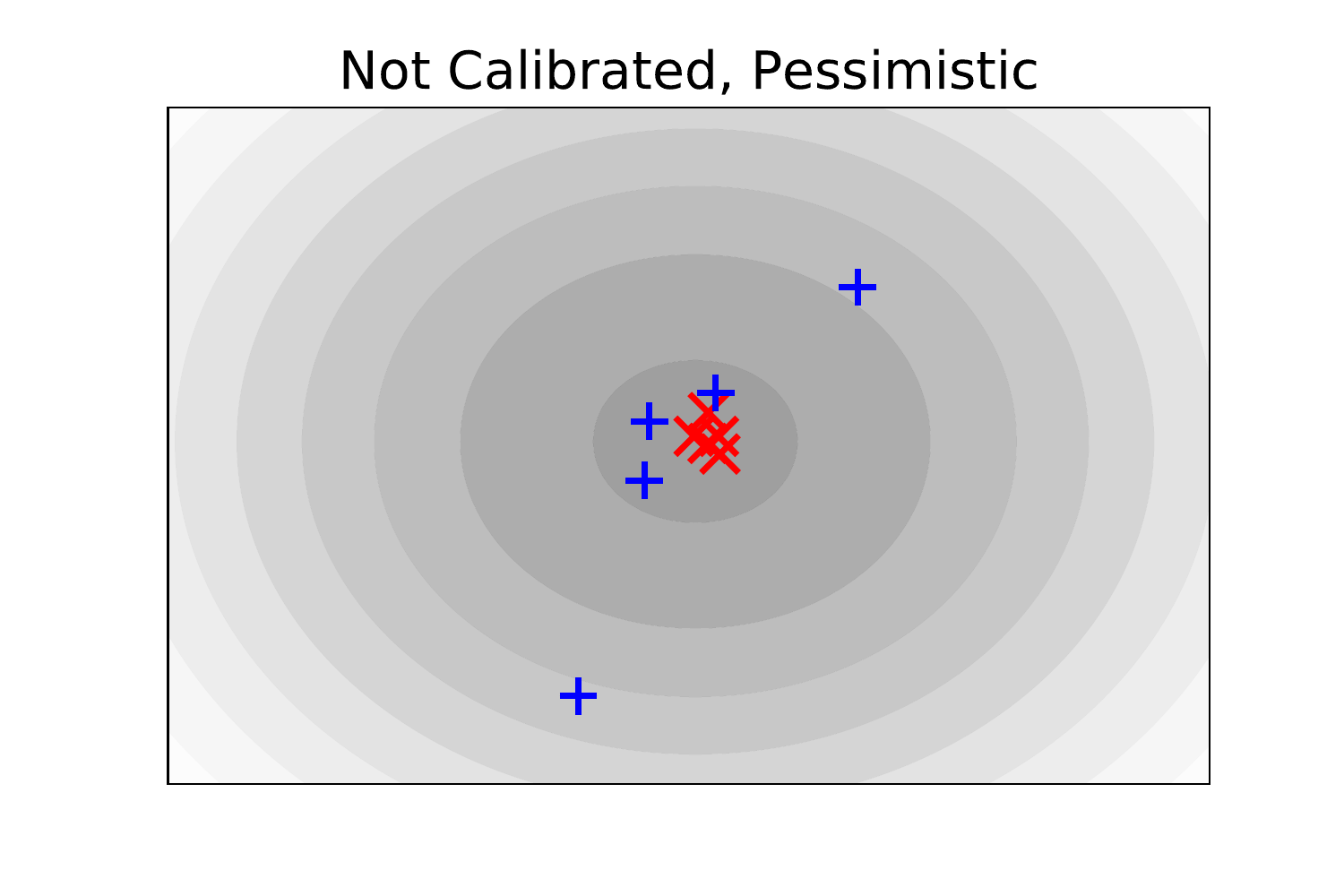}
  \caption{Posterior distributions and solutions from three different probabilistic solvers: calibrated (top), optimistic (bottom left), and pessimistic (bottom right). The gray contours represent the posterior distributions, the red symbols ``$\times$'' the solutions,
  and the blue symbols ``+''  samples from the posterior distributions.}
  \label{F:Calibration}
\end{figure}

\begin{example}
  \label{Ex:Pictures}
  This is the visual equivalent of Example~\ref{Ex:Story},
  where Carol receives the images in  \figref{F:Calibration} of three different
  probabilistic solvers, but 
  without any identification of the solutions and posterior samples.
      
\begin{itemize}
\item Top plot.
This solver is calibrated because the solutions 
 look indistinguishable from the samples of the posterior distribution.
\item Bottom left plot.
This solver  is not calibrated because the solutions are unlikely 
to be samples from the posterior distribution.\\
  The solver is \emph{optimistic} because the posterior distribution is concentrated in an area of $\Rn$ that is too small to cover the solutions.
  \item Bottom right plot.
The solver is not calibrated. Although the solutions could plausibly be sampled from the posterior, they are concentrated too close to the center of the distribution. \\
The solver is \emph{pessimistic} because the area covered
by the posterior distribution is much larger than the area containing the solutions.
\end{itemize}
\end{example}
  
\begin{remark}
\label{r_41}
The posterior means and covariances from a probabilistic solver can depend on the solution $\xvec_*$, as is the case for BayesCG. If a solver is applied to a random linear system in \dref{D:StrongCalibration} and if the posterior means and covariances depend on the solution $\Xrv_*$, then the posterior means and covariances are also random variables.

\dref{D:StrongCalibration} prevents the posterior covariances from being random variables by forcing them to be independent of the random right hand side $\Brv$.
Although this is a realistic constraint for the stationary iterative solvers in  
\cite{RICO21},
it does not apply to BayesCG under the Krylov prior, because 
Krylov  posterior covariances depend non-linearly on the right-hand side. In Sections~\ref{S:UQZ} and~\ref{S:UQS}, we present a remedy for BayesCG in the form
of test statistics that are motivated by \dref{D:StrongCalibration} and \lref{L:CalibrationAlt}.
\end{remark}

\subsection{The $Z$-statistic}
\label{S:UQZ}
We assess BayesCG under the Krylov prior with
an existing test statistic, the $Z$-statistic, which is a necessary condition for calibration and can be viewed as a weaker normwise version of criterion (\ref{Eq:StrongCalibration}).
We review the $Z$-statistic (Section~\ref{S:UQZDef}), and apply 
it to BayesCG under the Krylov prior (Section~\ref{S:UQZKrylov}).

\subsubsection{Review of the $Z$-statistic}
\label{S:UQZDef}
We review the $Z$-statistic (Definition~\ref{d:ZStat}), and the 
chi-square distribution (Definition~\ref{D:ChiSquare}), which links the $Z$-statistic to calibration (Theorem~\ref{T:ZStat}).
Then we discuss how to generate samples of the $Z$-statistic  (Algorithm~\ref{A:ZStat}), how to use them for the assessment of  calibration (Remark~\ref{r_zeval}), and then present
the Kolmogorov-Smirnov statistic as a computationally inexpensive estimate for the 
difference between two general distributions (Definition~\ref{D:KSTest}).
 
The $Z$-statistic was introduced in \cite[Section 6.1]{Cockayne:BCG} as a means  to assess the calibration of BayesCG, and has subsequently been applied to other probabilistic linear solvers \cite[Section 6.4]{Bartels}, \cite[Section 9]{Fanaskov21}. 

\begin{definition}[{\cite[Section 6.1]{Cockayne:BCG}}]\label{d:ZStat}
 Let $\Amat\Xrv_* = \Brv$ be a class of linear systems where $\Amat\in\Real^{n\times n}$ is symmetric positive definite, and $\Xrv_*\sim\mu_0 \equiv \N(\xvec_0,\Sigmat_0)$.
 Let $\mu_m \equiv \N(\xvec_m,\Sigmat_m)$, $1\leq m \leq n$, be the posterior
 distributions from a probabilistic solver under the prior $\mu_0$ applied to $\Amat\Xrv_* = \Brv$. The $Z$-statistic is
 \begin{equation}
    \label{Eq:ZStat}
    \Zstat_m(\Xrv_*) \equiv (\Xrv_*-\xvec_m)^T\Sigmat_m^\dagger(\Xrv_*-\xvec_m),
    \qquad 1\leq m\leq n.
  \end{equation}
\end{definition} 
     
     The chi-squared distribution below furnishes the link from $Z$-statistic to calibration.

\begin{definition}[{\cite[Definition 2.2]{Ross07}}]
  \label{D:ChiSquare}
If $\Xrv_1,\ldots, \Xrv_f\in\mathcal{N}(0,1)$  are independent
random normal variables, then $\sum_{j=1}^{f}{\Xrv_j^2}$
  is distributed according to the chi-squared distribution $\chi_f^2$ with $f$ degrees of freedom and mean~$f$.\\
In other words, if $\Xrv\sim\N(\zerovec,\Imat_f)$, then
$\Xrv^T\Xrv\sim\chi_f^2$ and $\Exp[\Xrv^T\Xrv]=f$.
\end{definition}

We show that the $Z$-statistic is a necessary condition for calibration. That is:
If a probabilistic solver is calibrated, then  
the $Z$-statistic is distributed according to a chi-squared distribution.

\begin{theorem}[{\cite[Proposition 1]{BCG:Supp}}]
    \label{T:ZStat}
    Let $\Amat\Xrv_* = \Brv$ be a class of linear systems where $\Amat\in\Real^{n\times n}$ is symmetric positive definite, and $\Xrv_*\sim\mu_0 \equiv \N(\xvec_0,\Sigmat_0)$.
Assume that a probabilistic solver under the prior $\mu_0$ applied to $\Amat\Xrv_* = \Brv$
computes the posteriors $\mu_m \equiv \N(\xvec_m,\Sigmat_m)$ with $\rank(\Sigmat_m) = p_m$, $1\leq m\leq n$. 

If the solver is calibrated, then 
\begin{equation*}
\Zstat_m(\Xrv_*)\sim\chi_{p_m}^2, \qquad 1\leq m\leq n.
\end{equation*}
\end{theorem}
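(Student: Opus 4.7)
The plan is to unfold the definition of the $Z$-statistic using the eigendecomposition of $\Sigmat_m$ from Definition~\ref{D:StrongCalibration}, and then apply the two calibration identities in \eref{Eq:StrongCalibration} to reduce $\Zstat_m(\Xrv_*)$ to a sum of squares of independent standard normals, which is exactly $\chi^2_{p_m}$ by Definition~\ref{D:ChiSquare}.

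First, I would fix an eigendecomposition $\Sigmat_m = \Umat \Dmat \Umat^T$, partitioned consistently with $\Umat = \begin{bmatrix}\Umat_m & \Umat_m^\perp\end{bmatrix}$, so that $\Dmat = \diag\begin{pmatrix}\Dmat_m & \zerovec\end{pmatrix}$ with $\Dmat_m \equiv \Umat_m^T\Sigmat_m\Umat_m \in \Real^{p_m\times p_m}$ symmetric positive definite. From the standard formula for the Moore--Penrose inverse of a symmetric matrix \cite[Chapter 6]{Higham08}, this yields $\Sigmat_m^\dagger = \Umat_m \Dmat_m^{-1} \Umat_m^T$.

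Next, I would split $\Xrv_* - \xvec_m$ into its components in $\range(\Sigmat_m)$ and $\ker(\Sigmat_m)$,
\begin{equation*}
  \Xrv_* - \xvec_m = \Umat_m \Umat_m^T(\Xrv_*-\xvec_m) + \Umat_m^\perp (\Umat_m^\perp)^T (\Xrv_*-\xvec_m).
\end{equation*}
The second calibration identity in \eref{Eq:StrongCalibration} forces the second term to vanish, so substituting into \eref{Eq:ZStat} and using $\Umat_m^T\Umat_m = \Imat_{p_m}$ gives
\begin{equation*}
  \Zstat_m(\Xrv_*) = (\Xrv_*-\xvec_m)^T \Umat_m \Dmat_m^{-1} \Umat_m^T (\Xrv_*-\xvec_m)
  = \Wrv^T \Wrv, \qquad \Wrv \equiv \Dmat_m^{-1/2}\Umat_m^T(\Xrv_*-\xvec_m).
\end{equation*}

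Finally, the first calibration identity in \eref{Eq:StrongCalibration} states precisely that $\Wrv \sim \N(\vzero,\Imat_{p_m})$, so Definition~\ref{D:ChiSquare} yields $\Wrv^T\Wrv \sim \chi^2_{p_m}$, which completes the argument. I do not expect a real obstacle here: the whole proof is bookkeeping between the pseudoinverse and the two parts of the calibration criterion, and the only subtle point is remembering to discard the kernel component before taking the square root $\Dmat_m^{-1/2}$, which would otherwise be undefined on the zero eigenvalues of $\Sigmat_m$.
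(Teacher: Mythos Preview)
Your argument is correct. It differs from the paper's proof mainly in packaging: the paper writes $\Zstat_m(\Xrv_*)=\Mrv_m^T\Mrv_m$ with $\Mrv_m\equiv(\Sigmat_m^\dagger)^{1/2}(\Xrv_*-\xvec_m)$, invokes the equivalent characterization of calibration in Lemma~\ref{L:CalibrationAlt} together with Gaussian stability (Lemma~\ref{L:GaussStability}) to obtain $\Mrv_m\sim\N(\zerovec,\Umat_m\Umat_m^T)$, and then appeals to the auxiliary Lemma~\ref{L:ChiProjector} about Gaussians with projector covariance. You instead work directly from Definition~\ref{D:StrongCalibration}: by writing $\Sigmat_m^\dagger=\Umat_m\Dmat_m^{-1}\Umat_m^T$ you recognise $\Wrv=\Dmat_m^{-1/2}\Umat_m^T(\Xrv_*-\xvec_m)$ as literally the left-hand side of the first identity in~\eref{Eq:StrongCalibration}, so $\Wrv\sim\N(\zerovec,\Imat_{p_m})$ and Definition~\ref{D:ChiSquare} finishes. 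Your route is slightly more elementary, bypassing Lemmas~\ref{L:CalibrationAlt}, \ref{L:GaussStability}, and~\ref{L:ChiProjector}; the paper's route has the advantage of isolating the reusable fact (Lemma~\ref{L:ChiProjector}) that $\|\Xrv\|_2^2\sim\chi_p^2$ whenever $\Xrv$ has a rank-$p$ projector covariance. One small remark: your appeal to the second identity in~\eref{Eq:StrongCalibration} is harmless but unnecessary here, since $\Sigmat_m^\dagger=\Umat_m\Dmat_m^{-1}\Umat_m^T$ already annihilates the $\ker(\Sigmat_m)$ component regardless of calibration.
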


\begin{proof}
Write $\Zstat_m(\Xrv_*)= \Mrv_m^T\Mrv_m$, where
$\Mrv_m \equiv (\Sigmat_m^\dagger)^{1/2}(\Xrv_*-\xvec_m)$.
\lref{L:CalibrationAlt} implies that a calibrated solver produces posteriors with
\begin{equation*}
(\Xrv_*-\xvec_m)\sim\N(\zerovec,\Sigmat_m),\qquad 1\leq m\leq n.
\end{equation*}
With the eigenvector matrix $\Umat_m\in\Real^{n\times p_m}$ as in \dref{D:StrongCalibration},
\lref{L:GaussStability} in Appendix~\ref{S:Aux} implies
\begin{equation*}
    \Mrv_m\sim\N(\zerovec,\Umat_m\Umat_m^T),\qquad 1\leq m\leq n.
\end{equation*}
Since the covariance of $\Mrv_m$ is an orthogonal projector,
\lref{L:ChiProjector} implies
$\Zstat_m(\Xrv_*) = (\Mrv_m^T\Mrv_m) \sim \chi_{p_m}^2$.
\qed
\end{proof}

\tref{T:ZStat} implies that BayesCG is calibrated if the $Z$-statistic is distributed according to a chi-squared distribution with $p_m = \rank(\Sigmat_0) - m$ degrees of freedom. For the Krylov prior specifically, $p_m = \kry - m$.

\paragraph{Generating samples from the $Z$-statistic and assessing calibration.}
For a user-specified probabilistic linear \texttt{solver} and a symmetric positive definite matrix~$\Amat$,
Algorithm~\ref{A:ZStat} samples $N_{\text{test}}$
solutions $\xvec_*$ from the prior distribution $\mu_{0}$, defines the systems
$\bvec\equiv \Amat\xvec_*$,
runs $m$ iterations of the \texttt{solver} on $\bvec\equiv \Amat\xvec_*$,
and computes $\Zstat_m(\xvec_*)$ in (\ref{Eq:ZStat}).

The application of the Moore-Penrose inverse in Line~\ref{A:ZStatLstsq}
can be implemented by computing the minimal norm solution
$\qvec=\Sigmat_m^{\dagger}(\xvec_*-\xvec_m)$
of the least squares problem
\begin{equation}
\label{Eq:ZStatLstsq}
 \min_{\uvec\in\Rn} \|(\xvec_*-\xvec_m) - \Sigmat_m\uvec \|_2,
\end{equation}
followed by the inner product
$z_i = (\xvec_*-\xvec_m)^T\qvec$.

\begin{algorithm}
\caption{Sampling from the $Z$-statistic}
\label{A:ZStat}
\begin{algorithmic}[1]
  \State \textbf{Input:} spd $\Amat\in\Rnn$, $\mu_0 = \N(\xvec_0,\Sigmat_0)$, 
  \texttt{solver}, $m$, $N_{\text{test}}$
  \For{$i = 1:N_{\text{test}}$}
  \State{Sample $\xvec_*$ from prior distribution $\mu_{0}$} 
  \Comment{Sample solution vector}
  \State{$\bvec = \Amat\xvec_*$} \Comment{Define test problem}
  \State{$[\xvec_m,\Sigmat_m]= \texttt{solver}(\Amat,\bvec,\mu_0,m)$} \Comment{Compute posterior $\mu_m\equiv\N(\xvec_m,\Sigmat_m)$}
    \State{$ z_i = (\xvec_*-\xvec_m)^T\Sigmat_m^\dagger(\xvec_*-\xvec_m)$} \Comment{Compute $Z$-statistic sample} \label{A:ZStatLstsq}
  \EndFor
  \State{\textbf{Output:} $Z$-statistic samples $z_i$, $1\leq i \leq N_{test}$.}
\end{algorithmic}
\end{algorithm}

\begin{remark}\label{r_zeval}
We assess calibration of the \texttt{solver}  
by comparing the $Z$-statistic samples $z_i$ from Algorithm~\ref{A:ZStat}
to the chi-squared distribution $\chi_{p_m}^2$ with $p_m\equiv\rank(\Sigmat_0)-m$ degrees of freedom, 
based on the following criteria from \cite[Section 6.1]{Cockayne:BCG}.

\begin{description}
\item[\bf Calibrated:] If $z_i\sim\chi_{p_m}^2$, 
then $\xvec_*\sim\mu_m$ and the solutions $\xvec_*$ are  distributed according to 
the posteriors $\mu_m$.

\item[\bf Pessimistic:] If the $z_i$ are concentrated around smaller values than $\chi_{p_m}^2$, then the solutions $\xvec_*$ occupy a smaller area of $\Rn$ than predicted by 
$\mu_m$.

\item[\bf Optimistic:] If the $z_i$ are concentrated around larger values than $\chi_{p_m}^2$, then the solutions cover a larger area of $\Rn$ than predicted by $\mu_m$.
\end{description}
\end{remark}

In \cite[Section 6.1]{Cockayne:BCG} and \cite[Section 6.4]{Bartels}, the $Z$-statistic samples and the predicted chi-squared distribution are compared visually. In \sref{S:Experiments}, we make an additional quantitative comparison with the Kolmogorov-Smirnov test to estimate the difference between 
two probability distributions.

\begin{definition}[{\cite[Section 3.4.1]{Kaltenbach12}}] \label{D:KSTest}
  Given two distributions $\mu$ and $\nu$ on $\Rn$ with cumulative distribution functions $F_\mu$ and $F_\nu$, the Kolmogorov-Smirnov statistic is
\begin{equation*}
KS(\mu,\nu) = \sup_{x\in\Real} |F_\mu(x) - F_\nu(x)|,
\end{equation*}
where $0\leq KS(\mu,\nu)\leq 1$.

If $KS(\mu,\nu) = 0$, then $\mu$ and $\nu$  have the same cumulative distribution functions, $F_\mu=F_\nu$.
If $KS(\mu,\nu) = 1$, then $\mu$ and $\nu$ do not overlap.
In general, the lower $KS(\mu,\nu)$, the closer $\mu$ and $\nu$ are to each other.
\end{definition}

In contrast to the Wasserstein distance in Definition~\ref{D:Wasserstein},
the Kolmogorov-Smirnov statistic can be easier to 
estimate---especially if the distributions are not Gaussian---but it is not a metric.
Consequently, if  $\mu$ and $\nu$ do not overlap, then $KS(\mu,\nu) = 1$
regardless of how far $\mu$ and $\nu$ are apart, while the Wasserstein metric 
still gives information about the distance between $\mu$ and~$\nu$.

\subsubsection{$Z$-Statistic for BayesCG under the Krylov prior}
\label{S:UQZKrylov}
We apply the $Z$-statistic to BayesCG under the Krylov prior. We  start with an
expression for the Moore-Penrose inverse of the Krylov posterior covariances (Lemma~\ref{L:KrylovInv}). Then
we show that the $Z$-statistic for the full Krylov posteriors 
has the same \textit{mean} as the corresponding chi-squared distribution (\tref{T:KrylovZ}), but its \textit{distribution} is different. Therefore the $Z$-statistic is inconclusive about the calibration of BayesCG under the Krylov prior (Remark~\ref{R:ZBad}).

\begin{lemma}
  \label{L:KrylovInv}
In Definition~\ref{D:KrylovApprox}, 
abbreviate $\widehat{\Vmat} \equiv \Vmat_{m+1:m+d}$ and $\widehat{\Phimat} \equiv \Phimat_{m+1:m+d}$. The rank-$d$ 
approximate Krylov posterior covariances have the
Moore-Penrose inverse
  \begin{equation*}
    \widehat{\Gammat}_m^\dagger = 
\left(\widehat{\Vmat}\widehat{\Phimat}\widehat{\Vmat}^T\right)^{\dagger}=
    \widehat{\Vmat}(\widehat{\Vmat}^T\widehat{\Vmat})^{-1}\widehat{\Phimat}^{-1}(\widehat{\Vmat}^T\widehat{\Vmat})^{-1}\widehat{\Vmat}^T, \qquad 1\leq m\leq g-d.
  \end{equation*}
  \end{lemma}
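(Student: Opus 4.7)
My plan is to prove the identity by verifying that the proposed matrix satisfies the four defining Moore--Penrose conditions, which is the most direct route given that the expression is already stated. I will denote the candidate pseudoinverse by
\begin{equation*}
\Pmat \equiv \widehat{\Vmat}(\widehat{\Vmat}^T\widehat{\Vmat})^{-1}\widehat{\Phimat}^{-1}(\widehat{\Vmat}^T\widehat{\Vmat})^{-1}\widehat{\Vmat}^T,
\end{equation*}
so that the task is to show $\widehat{\Gammat}_m \Pmat \widehat{\Gammat}_m = \widehat{\Gammat}_m$, $\Pmat \widehat{\Gammat}_m \Pmat = \Pmat$, and that both $\widehat{\Gammat}_m \Pmat$ and $\Pmat \widehat{\Gammat}_m$ are symmetric.

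First, I need to establish that the two inverses in $\Pmat$ make sense. The $\Amat$-orthonormality $\Vmat^T\Amat\Vmat=\Imat_g$ restricts to $\widehat{\Vmat}^T\Amat\widehat{\Vmat}=\Imat_d$, so $\widehat{\Vmat}$ has full column rank $d$. Consequently its Gram matrix $\widehat{\Vmat}^T\widehat{\Vmat}$ is a $d\times d$ symmetric positive definite matrix, hence invertible. The diagonal matrix $\widehat{\Phimat}$ has strictly positive entries $\phi_i$ for $m+1\le i\le m+d\le g$, by Lemma~\ref{L:KrylovCG} together with the positivity of the CG step sizes $\gamma_i$ before termination at the grade $g$, so $\widehat{\Phimat}^{-1}$ exists.

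Next I would compute $\widehat{\Gammat}_m\Pmat$. The key simplification is the inner cancellation $\widehat{\Vmat}^T\widehat{\Vmat}\cdot(\widehat{\Vmat}^T\widehat{\Vmat})^{-1}=\Imat_d$, followed by $\widehat{\Phimat}\widehat{\Phimat}^{-1}=\Imat_d$, yielding
\begin{equation*}
\widehat{\Gammat}_m\Pmat = \widehat{\Vmat}(\widehat{\Vmat}^T\widehat{\Vmat})^{-1}\widehat{\Vmat}^T,
\end{equation*}
the orthogonal projector onto $\range(\widehat{\Vmat})=\range(\widehat{\Gammat}_m)$. An analogous cancellation from the other side shows that $\Pmat\widehat{\Gammat}_m$ equals the same projector, and the symmetry of both products follows immediately. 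Multiplying this projector against $\widehat{\Gammat}_m$ from the right uses $\widehat{\Vmat}^T\widehat{\Vmat}\cdot(\widehat{\Vmat}^T\widehat{\Vmat})^{-1}=\Imat_d$ once more to return $\widehat{\Gammat}_m$, giving the first condition; multiplying the projector against $\Pmat$ similarly returns $\Pmat$ and gives the second condition.

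There is no serious obstacle here: the only subtlety is the need to establish the column rank of $\widehat{\Vmat}$ from its $\Amat$-orthonormality, and to keep track of the fact that $\widehat{\Vmat}^T\widehat{\Vmat}$ is \emph{not} the identity (only $\widehat{\Vmat}^T\Amat\widehat{\Vmat}$ is), which is precisely why $(\widehat{\Vmat}^T\widehat{\Vmat})^{-1}$ appears twice in the formula rather than disappearing. As an alternative presentation one could instead use the full-rank factorization $\widehat{\Gammat}_m = FF^T$ with $F\equiv\widehat{\Vmat}\widehat{\Phimat}^{1/2}$ and apply the standard identity $(FF^T)^\dagger = F[(F^TF)^{-1}]^2 F^T$ valid whenever $F$ has full column rank, then expand $F^TF = \widehat{\Phimat}^{1/2}(\widehat{\Vmat}^T\widehat{\Vmat})\widehat{\Phimat}^{1/2}$ and cancel the square roots of $\widehat{\Phimat}$; but the direct Moore--Penrose verification is shorter and avoids invoking matrix square roots.
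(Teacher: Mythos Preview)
Your proof is correct. The direct verification of the four Moore--Penrose axioms goes through exactly as you describe, and your justification for the invertibility of $\widehat{\Vmat}^T\widehat{\Vmat}$ and $\widehat{\Phimat}$ is sound.

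The paper takes a different route: rather than verifying the axioms, it invokes the product rule for Moore--Penrose inverses (Lemma~\ref{L:PseudoProduct}), which says that $(\Gmat\Jmat)^\dagger=\Jmat^\dagger\Gmat^\dagger$ whenever $\Gmat$ has full column rank and $\Jmat$ has full row rank. Writing $\widehat{\Gammat}_m=\widehat{\Vmat}\cdot(\widehat{\Phimat}\widehat{\Vmat}^T)$ with $\widehat{\Vmat}$ of full column rank and $\widehat{\Phimat}\widehat{\Vmat}^T$ of full row rank, the paper applies the rule twice and substitutes the explicit one-sided inverses $\widehat{\Vmat}^\dagger=(\widehat{\Vmat}^T\widehat{\Vmat})^{-1}\widehat{\Vmat}^T$ and $(\widehat{\Vmat}^T)^\dagger=\widehat{\Vmat}(\widehat{\Vmat}^T\widehat{\Vmat})^{-1}$. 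Your approach is more self-contained and arguably shorter, since the formula is already in hand and the cancellations are immediate; the paper's approach is more structural, in that it explains \emph{why} the formula has its particular shape via the full-rank factorization.
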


\begin{proof}
We exploit the fact that all factors of $\widehat{\Gammat}_m$ have full column rank.

The factors $\widehat\Vmat$ and $\widehat\Vmat^T$ have full column and row rank, respectively, because $\Vmat$ has $\Amat$-orthonormal columns. Additionally, the diagonal matrix $\widehat{\Phimat}$ is nonsingular. Then
\lref{L:PseudoProduct} in Appendix~\ref{S:Aux} implies that the Moore-Penrose inverses can be expressed
in terms of the matrices proper,
  \begin{equation}
    \label{Eq:VInv}
    \widehat{\Vmat}^\dagger = (\widehat{\Vmat}^T\widehat{\Vmat})^{-1}\widehat{\Vmat}^T, \qquad (\widehat{\Vmat}^T)^\dagger = \widehat{\Vmat}(\widehat{\Vmat}^T\widehat{\Vmat})^{-1},
  \end{equation}
  and
  \begin{equation}
    \label{Eq:PhiVInv}
    (\widehat{\Phimat}\widehat{\Vmat}^T)^\dagger = (\widehat{\Vmat}^T)^\dagger\widehat{\Phimat}^{-1} = \widehat{\Vmat}(\widehat{\Vmat}^T\widehat{\Vmat})^{-1} \widehat{\Phimat}^{-1}.
  \end{equation}
Since $\widehat{\Phimat}\widehat{\Vmat}^T$ also has full row rank, apply \lref{L:PseudoProduct} to $\widehat{\Gammat}_m$,
  \begin{equation*}
    \widehat{\Gammat}_m^\dagger = (\widehat{\Phimat}\widehat{\Vmat}^T)^\dagger\widehat{\Vmat}^\dagger,
  \end{equation*}
and substitute \eref{Eq:VInv} and \eref{Eq:PhiVInv} into the above expression.\qed
    \end{proof}

We apply the $Z$-statistic to the full Krylov posteriors, and show
that $Z$-statistic samples reproduce the dimension of the unexplored Krylov space.

\begin{theorem}
  \label{T:KrylovZ}
Under the assumptions of Theorem~\ref{T:KrylovPrior}, let BayesCG
under the Krylov prior $\eta_0 \equiv \N(\xvec_0,\Gammat_0)$ produce full Krylov posteriors
$\eta_m \equiv \N(\xvec_m,\Gammat_m)$. Then the $Z$-statistic is equal to
  \begin{equation*}
    \Zstat_m(\xvec_*) = (\xvec_*-\xvec_m)^T\Gammat_m^\dagger(\xvec_*-\xvec_m) = \kry-m, \qquad 1\leq m\leq \kry.
  \end{equation*}
\end{theorem}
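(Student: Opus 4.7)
The plan is to expand $\xvec_* - \xvec_m$ in the $\Amat$-orthonormal basis of the tail Krylov space, apply Lemma~\ref{L:KrylovInv} to $\Gammat_m^\dagger$, and watch a telescoping collapse the quadratic form to a count of remaining basis vectors.

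First I would write $\xvec_* - \xvec_0$ in the basis $\Vmat = \Vmat_{1:g}$. Since $\xvec_* \in \xvec_0 + \Kcal_g(\Amat,\rvec_0) = \xvec_0 + \range(\Vmat)$ and $\Vmat^T\Amat\Vmat = \Imat_g$, the coefficients are $\vvec_i^T\Amat(\xvec_*-\xvec_0) = \vvec_i^T\rvec_0$ (using $\Amat(\xvec_*-\xvec_0)=\rvec_0$), hence
\begin{equation*}
\xvec_* - \xvec_0 = \sum_{i=1}^{g} (\vvec_i^T\rvec_0)\,\vvec_i = \Vmat\Vmat^T\rvec_0.
\end{equation*}
Combining with the Krylov-posterior mean $\xvec_m - \xvec_0 = \Vmat_{1:m}\Vmat_{1:m}^T\rvec_0$ from Theorem~\ref{T:KrylovPrior} yields
\begin{equation*}
\xvec_* - \xvec_m = \Vmat_{m+1:g}\Vmat_{m+1:g}^T\rvec_0 = \widehat\Vmat\,\avec,\qquad \avec \equiv \widehat\Vmat^T\rvec_0,
\end{equation*}
where I abbreviate $\widehat\Vmat \equiv \Vmat_{m+1:g}$ and $\widehat\Phimat \equiv \Phimat_{m+1:g}$. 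Notably, the $i$-th entry of $\avec$ is $\vvec_{m+i}^T\rvec_0$, so $\avec_i^2 = \phi_{m+i}$ by~\eref{Eq:PhiDef}.

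Next I would apply Lemma~\ref{L:KrylovInv} with $d = g-m$ to obtain
\begin{equation*}
\Gammat_m^\dagger = \widehat\Vmat(\widehat\Vmat^T\widehat\Vmat)^{-1}\widehat\Phimat^{-1}(\widehat\Vmat^T\widehat\Vmat)^{-1}\widehat\Vmat^T.
\end{equation*}
Substituting both expressions into the $Z$-statistic and cancelling the sandwiched Gram matrices $\widehat\Vmat^T\widehat\Vmat$ gives
\begin{equation*}
\Zstat_m(\xvec_*) = \avec^T\widehat\Vmat^T\Gammat_m^\dagger\widehat\Vmat\,\avec = \avec^T\widehat\Phimat^{-1}\avec = \sum_{i=1}^{g-m}\frac{\avec_i^2}{\phi_{m+i}} = \sum_{i=1}^{g-m}\frac{\phi_{m+i}}{\phi_{m+i}} = g-m.
\end{equation*}

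The one subtlety to guard against is the invertibility of $\widehat\Phimat$: the last identity requires $\phi_{m+i} > 0$ for $1 \le i \le g-m$. I would justify this by noting that $g$ is the grade of $\rvec_0$ with respect to $\Amat$ (Definition~\ref{e_kmax}), so CG does not stagnate before step $g$, ensuring $\|\rvec_{i-1}\|_2 > 0$ and $\gamma_i > 0$ for $i \le g$, and hence $\phi_i > 0$ by Lemma~\ref{L:KrylovCG}. Apart from this bookkeeping, the argument is a direct computation; the main conceptual step is simply the identification of $\avec_i^2$ with $\phi_{m+i}$, which is precisely what makes the sum telescope to $g-m$.
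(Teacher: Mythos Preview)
Your proof is correct and follows essentially the same route as the paper: express $\xvec_*-\xvec_m=\widehat\Vmat\widehat\Vmat^T\rvec_0$, plug in the Moore--Penrose inverse from Lemma~\ref{L:KrylovInv}, cancel the Gram matrices, and reduce to $\sum_j (\vvec_j^T\rvec_0)^2/\phi_j=g-m$ via~\eref{Eq:PhiDef}. Your derivation of $\xvec_*-\xvec_0=\Vmat\Vmat^T\rvec_0$ from $\Amat$-orthonormality is a nice self-contained alternative to simply citing the $m=g$ case of Theorem~\ref{T:KrylovPrior}, and your explicit check that $\phi_i>0$ is a welcome bit of care that the paper leaves implicit.
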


\begin{proof}
Express the error $\xvec_0-\xvec_m$ 
in terms of $\widehat{\Vmat} \equiv \Vmat_{m+1:m+d}$
by inserting 
\begin{align}
\xvec_*=\xvec_0+\Vmat_{1:\kry}\Vmat_{1:\kry}^T\rvec_0,\qquad
\xvec_m=\xvec_0+\Vmat_{1:m}\Vmat_{1:m}^T\rvec_0,
\quad 1\leq m\leq \kry, \label{Eq:DirectPosterior}
\end{align}
from Theorem~\ref{T:KrylovPrior} into 
\begin{align*}
  \xvec_* -\xvec_m = \Vmat_{m+1:\kry}\Vmat_{m+1:\kry}^T\rvec_{0}
=\widehat{\Vmat}\widehat{\Vmat}^T\,\rvec_0.
\end{align*}
This expression is identical to \cite[Equation (5.6.5)]{Liesen}, which relates the CG error to the search directions and step sizes of the remaining iterations.

With \lref{L:KrylovInv}, this implies for the $Z$-statistic in Theorem~\ref{T:ZStat}
  \begin{align*}
    \label{Eq:T:KrylovZ}
    \Zstat_m(\xvec_*) &= (\xvec_*-\xvec_m)^T\Gammat_m^\dagger\xvec_*-\xvec_m) \\
    &=\underbrace{\rvec_0^T\widehat{\Vmat}\widehat{\Vmat}^T}_{(\xvec_*-\xvec_m)^T} \underbrace{\widehat{\Vmat}(\widehat{\Vmat}^T\widehat{\Vmat})^{-1}\widehat{\Phimat}^{-1}(\widehat{\Vmat}^T\widehat{\Vmat})^{-1}\widehat{\Vmat}^T}_{\Gammat_m^\dagger} \underbrace{ \widehat{\Vmat}\widehat{\Vmat}^T\rvec_0}_{(\xvec_*-\xvec_m)}\\
    &=\rvec_0^T\widehat{\Vmat}\>
    \underbrace{(\widehat{\Vmat}^T\widehat{\Vmat})(\widehat{\Vmat}^T\widehat{\Vmat})^{-1}}_{\Imat}\>
    \widehat{\Phimat}^{-1}\>
    \underbrace{(\widehat{\Vmat}^T\widehat{\Vmat})^{-1}(\widehat{\Vmat}^T \widehat{\Vmat})}_{\Imat}\>\widehat{\Vmat}^T\rvec_0\\
&=\rvec_0^T\widehat{\Vmat}\widehat{\Phimat}^{-1}\widehat{\Vmat}^T\rvec_0.
  \end{align*}
  In other words,
  \begin{align*}
    \|\xvec_*-\xvec_m\|_{\widehat{\Gammat}_m^\dagger}^2 &= \left(\Vmat_{m+1:\kry}^T\rvec_0\right)^T \Phimat_{m+1:m+d}^{-1} \left(\Vmat_{m+1:\kry}^T\rvec_0\right)\\
    &= \sum_{j = m+1}^{\kry} \phi_j^{-1} (\vvec_j^T\rvec_0)^2=g-m,
    \qquad 0\leq m< \kry,
  \end{align*}
where the last inequality follows from $\phi_j = (\vvec_j^T\rvec_0)^2$ in
Definition~\ref{D:KrylovPrior}.\qed
  \end{proof}

\begin{remark}
  \label{R:ZBad}
 The $Z$-statistic is inconclusive about the calibration of BayesCG under the Krylov prior.
 
\tref{T:KrylovZ} shows that the $Z$-statistic is distributed according to a Dirac distribution at $\kry - m$. Thus,
the $Z$-statistic has the same mean as the chi-squared distribution  $\chi_{\kry-m}^2$, which suggests that BayesCG under the Krylov prior is neither optimistic or pessimistic.
However, although the means are the same, the distributions are not.
Hence, \tref{T:KrylovZ} does not imply that BayesCG under the 
Krylov prior is calibrated. 
\end{remark}

\subsection{The $S$-statistic}
\label{S:UQS}

We introduce a new test statistic for assessing the calibration of probabilistic solvers,
the $S$-statistic.
After discussing the relation between calibration and error estimation (\sref{S:CalibrationError}), we define the $S$-statistic (Section~\ref{S:SStatDef}), compare the $S$-statistic to the $Z$-statistic (\sref{S:ZSComp}), and then apply the $S$-statistic to BayesCG under the Krylov prior (Section~\ref{S:BayesCGS}).

\subsubsection{Calibration and Error Estimation}
\label{S:CalibrationError}
We establish a relation between the error of the posterior means (approximations to the solution) and the trace of posterior covariances (Theorem~\ref{T:CalibratedTrace}).

\begin{theorem}
\label{T:CalibratedTrace}
Let $\Amat\Xrv_* = \Brv$ be a class of linear systems where $\Amat\in\Real^{n\times n}$ is symmetric positive definite and $\Xrv_*\sim\mu_0 \equiv\N(\xvec_0,\Sigmat_0)$. Let $\mu_m \equiv \N(\xvec_m,\Sigmat_m)$, $1\leq m \leq n$ be the posterior distributions from a probabilistic solver under the prior $\mu_0$ applied to $\Amat\Xrv_* = \Brv$.

If the solver is calibrated, then
\begin{equation}
\label{Eq:T:CalibratedTrace}
\Exp[\|\Xrv_*-\xvec_m\|_\Amat^2] = \trace(\Amat\Sigmat_m), \qquad 1\leq m\leq n.
\end{equation}
\end{theorem}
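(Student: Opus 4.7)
The plan is to exploit Lemma \ref{L:CalibrationAlt}, which translates calibration into the distributional identity $\Xrv_* \sim \N(\xvec_m,\Sigmat_m)$, or equivalently $\Xrv_*-\xvec_m \sim \N(\zerovec,\Sigmat_m)$. Once we have a zero-mean Gaussian with a known covariance, the target quantity $\Exp[\|\Xrv_*-\xvec_m\|_\Amat^2]$ is a classical quadratic form, and the identity $\Exp[\Yrv^T\Amat\Yrv] = \trace(\Amat\Sigmat_m)$ for $\Yrv\sim\N(\zerovec,\Sigmat_m)$ delivers the result immediately.

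Concretely, I would first expand the weighted norm as the quadratic form
\begin{equation*}
\|\Xrv_*-\xvec_m\|_\Amat^2 = (\Xrv_*-\xvec_m)^T\Amat(\Xrv_*-\xvec_m),
\end{equation*}
using Definition \ref{D:BWasserstein}, and pass the expectation inside. Next, I would invoke Lemma \ref{L:CalibrationAlt} to conclude that $\Xrv_*-\xvec_m\sim \N(\zerovec,\Sigmat_m)$ under calibration. To evaluate the expected quadratic form, I would apply the standard trace identity: writing $\Sigmat_m = \Umat\Dmat\Umat^T$ (as in the proof of Lemma \ref{L:CalibrationAlt}) and a square factor $\Sigmat_m^{1/2}$, set $\Zrv \equiv (\Sigmat_m^{1/2})^\dagger(\Xrv_*-\xvec_m)$ restricted to $\range(\Sigmat_m)$; then $\Zrv$ has identity covariance on that subspace and $\Exp[\Zrv^T(\Sigmat_m^{1/2}\Amat\Sigmat_m^{1/2})\Zrv] = \trace(\Sigmat_m^{1/2}\Amat\Sigmat_m^{1/2}) = \trace(\Amat\Sigmat_m)$ by commutativity of the trace. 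Alternatively, and more cleanly, one can write $(\Xrv_*-\xvec_m)^T\Amat(\Xrv_*-\xvec_m) = \trace(\Amat(\Xrv_*-\xvec_m)(\Xrv_*-\xvec_m)^T)$ and interchange trace and expectation, yielding $\trace(\Amat\,\Exp[(\Xrv_*-\xvec_m)(\Xrv_*-\xvec_m)^T]) = \trace(\Amat\Sigmat_m)$.

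The one subtlety worth flagging is that the posterior mean $\xvec_m$ is generally a function of $\Brv=\Amat\Xrv_*$, so $\Xrv_*-\xvec_m$ is not a priori a simple Gaussian; this is exactly the issue Remark \ref{r_41} warns about. However, calibration in the sense of Definition \ref{D:StrongCalibration} forces $\Sigmat_m$ to be independent of $\Brv$, and Lemma \ref{L:CalibrationAlt} already absorbs the $\xvec_m$-dependence by asserting the Gaussian distribution of $\Xrv_*-\xvec_m$ outright. So once Lemma \ref{L:CalibrationAlt} is invoked, the rest is mechanical and there is no real obstacle—the proof is essentially a two-line computation.
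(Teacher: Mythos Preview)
Your proposal is correct and follows essentially the same route as the paper: invoke Lemma~\ref{L:CalibrationAlt} to obtain $\Xrv_*-\xvec_m\sim\N(\zerovec,\Sigmat_m)$, then evaluate the expected quadratic form. The only cosmetic difference is that the paper packages the second step as a citation to an appendix lemma (Lemma~\ref{L:ExpS}, itself a corollary of the standard identity in Lemma~\ref{L:QuadExp}), whereas you compute it inline via the trace trick; the substance is identical.
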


\begin{proof}
For a calibrated solver \lref{L:CalibrationAlt} implies that $\Xrv_*\sim\mu_m$.
Then apply \lref{L:ExpS} in Appendix~\ref{S:Aux} to the error $\|\Xrv_*-\xvec_m\|_\Amat^2$. \qed
\end{proof}

For a calibrated solver,
\tref{T:CalibratedTrace} implies that the equality $\|\xvec_*-\xvec_m\|_\Amat^2 = \trace(\Amat\Sigmat_m)$ 
holds \emph{on average}. This means the trace can overestimate 
the error for some solutions, 
while for others, it can underestimate the error. 

We explain how \tref{T:CalibratedTrace} relates the errors of a calibrated solver to the area in which its posteriors are concentrated. 

\begin{remark}
\label{R:SStatInterpretation}
The trace of a posterior covariance matrix quantifies the spread of its probability distribution---because the trace is the sum of the eigenvalues, which in the case of a covariance are the variances of the principal components \cite[Section 12.2.1]{JWHT21}. 

In analogy to viewing the $\Amat$-norm as the 2-norm weighted by $\Amat$, we can view $\trace(\Amat\Sigmat_m)$ as the trace of $\Sigmat_m$ weighted by $\Amat$. \tref{T:CalibratedTrace} shows that the $\Amat$-norm errors of a calibrated solver are equal to the weighted sum of the principal component variances from the posterior. Thus, the posterior means $\xvec_m$ and the areas in which the posteriors are concentrated both converge to the solution at the same speed, provided the solver is calibrated.
\end{remark}


\subsubsection{Definition of the $S$-statistic}
\label{S:SStatDef}
We introduce the $S$-statistic (\dref{D:SStat}), present an algorithm for generating
samples from the $S$-statistic (\aref{A:SStat}), and discuss their use
for assessing calibration of solvers (\rref{r_seval}).

The $S$-statistic represents a necessary condition for calibration,
as established in \tref{T:CalibratedTrace}.

\begin{definition}
  \label{D:SStat}
  Let $\Amat\Xrv_* = \Brv$ be a class of linear systems where $\Amat\in\Rnn$ is symmetric positive definite, and $\Xrv_*\sim\mu_0\equiv\N(\xvec_0,\Sigmat_0)$. 
Let $\mu_m \equiv \N(\xvec_m,\Sigmat_m)$, $1\leq m \leq n$, be the posterior
distributions from a probabilistic solver under the prior $\mu_0$ applied to $\Amat\Xrv_* = \Brv$. The $S$-statistic is
  \begin{equation}
      \label{Eq:SStat}
      \Sstat_m(\Xrv_*) \equiv \|\Xrv_*-\xvec_m\|_\Amat^2.
  \end{equation}
If the solver is calibrated then \tref{T:CalibratedTrace} implies 
  \begin{equation}
  \label{Eq:SStatTrace}
      \Exp[\Sstat(\Xrv_*)] = \trace(\Amat\Sigmat_m).
  \end{equation}
\end{definition}

\paragraph{Generating samples from the $S$-statistic and assessing calibration.}
For a user specified probabilistic linear \texttt{solver} and a symmetric positive definite matrix~$\Amat$, \aref{A:SStat} samples $N_{test}$ solutions $\xvec_*$ from the prior distribution $\mu_{0}$, defines the linear systems 
$\bvec = \Amat\xvec_*$, runs $m$ iterations of the solver on the system,
and computes $\Sstat_m(\xvec_*)$ and  $\trace(\Amat\Sigmat_m)$ from \eref{Eq:SStat}.

As with the $Z$-statistic, Algorithm~\ref{A:SStat} requires a separate reference
$\mu_{ref}$ when sampling solutions $\xvec_*$ for BayesCG under the Krylov prior.

\begin{algorithm}
\caption{Sampling from the $S$-statistic}
\label{A:SStat}
\begin{algorithmic}[1]
  \State \textbf{Input:} spd $\Amat\in\Rnn$, $\mu_0 = \N(\xvec_0,\Sigmat_0)$,  \texttt{solver}, $m$, $N_{\text{test}}$
  \For{$i = 1:N_{\text{test}}$}
  \State{Sample $\xvec_*$ from prior distribution $\mu_{0}$} 
  \Comment{Sample solution vector}
  \State{$\bvec = \Amat\xvec_*$} \Comment{Define test problem}
  \State{$[\xvec_m,\Sigmat_m]= \texttt{solver}(\Amat,\bvec,\mu_0,m)$} \Comment{Compute posterior $\mu_m\equiv\N(\xvec_m,\Sigmat_m)$}
  \State{$s_i = \|\xvec_*-\xvec_m\|_\Amat^2$} \Comment{Compute $S$-statistic for test problem}
  \State{$t_i = \trace(\Amat\Sigmat_m)$} \Comment{Compute trace for test problem}
  \EndFor
  \State{$h = (1/N_{test})\sum_{i = 1}^{N_\text{test}} s_i$} \Comment{Compute empirical mean of $S$-statistic samples}
  \State{\textbf{Output:} $S$-statistic samples $s_i$ and traces $t_i$, $1\leq i \leq N_{\text{test}}$; $S$-statistic mean $h$}
\end{algorithmic}
\end{algorithm}

\begin{remark}\label{r_seval}
We assess calibration of the solver by comparing the $S$-statistic
samples $s_i$ from Algorithm~\ref{A:SStat}
to the traces $t_i$, $1\leq i \leq N_{test}$. The following criteria are based on \tref{T:CalibratedTrace} and \rref{R:SStatInterpretation}.

\begin{description}
\item[\bf Calibrated:] If the solver is calibrated, the traces $t_i$ should all be equal to the empirical mean $h$ of the $S$-statistic samples $s_i$.
\item[\bf Pessimistic:] If the $s_i$ are concentrated around smaller values than the $t_i$, then the solutions $\xvec_*$ occupy a smaller area of $\Rn$ than predicted by the posteriors $\mu_m$.

\item[\bf Optimistic:] If the $s_i$ are concentrated around larger values than the $t_i$, then the solutions $\xvec_*$ occupy a larger area of $\Rn$ than predicted by $\mu_m$.
\end{description}

 We can also compare the empirical means of the $s_i$ and $t_i$, because a calibrated solver should produce $s_i$ and $t_i$ with the same mean.
Note that a comparison via the Kolmogorov-Smirnov statistic is not appropriate because 
the empirical distributions of $s_i$ and $t_i$ are generally different. 
\end{remark}

\subsubsection{Comparison of the $Z$- and $S$-statistics}
\label{S:ZSComp}

Both, $Z$- and $S$-statistic represent necessary conditions for calibration
((\ref{Eq:T:CalibratedTrace}) and (\ref{Eq:SStatTrace})); and both
 measure the norm of the error $\Xrv_*-\xvec_m$: The $Z$-statistic in the $\Sigmat_m^\dagger$-pseudo norm (Definition~\ref{d:ZStat}), and the
$S$-statistic in the $\Amat$-norm (Definition~\ref{D:SStat}).
Deeper down, though, the $Z$-statistic projects errors onto a single dimension 
(Theorem~\ref{T:ZStat}), while the $S$-statistic relates errors to the areas in which the posterior distributions are concentrated.

Due to its focus on the area of the posteriors, the $S$-statistic can give a \textit{false positive} for calibration. This occurs when the solution is not in the area of posterior concentration but the size of the posteriors is consistent with the errors. The $Z$-statistic is less likely to encounter this problem,
as illustrated in~\figref{F:ZSComparison}.

The $Z$-statistic is better at assessing calibration,
while the $S$ statistic produces accurate 
error estimates, which default to the traditional $\Amat$-norm estimates.
The $S$-statistic is also faster to compute because it does not require the solution of a least squares problem.

\begin{figure}
  \centering
  \includegraphics[scale = .35]{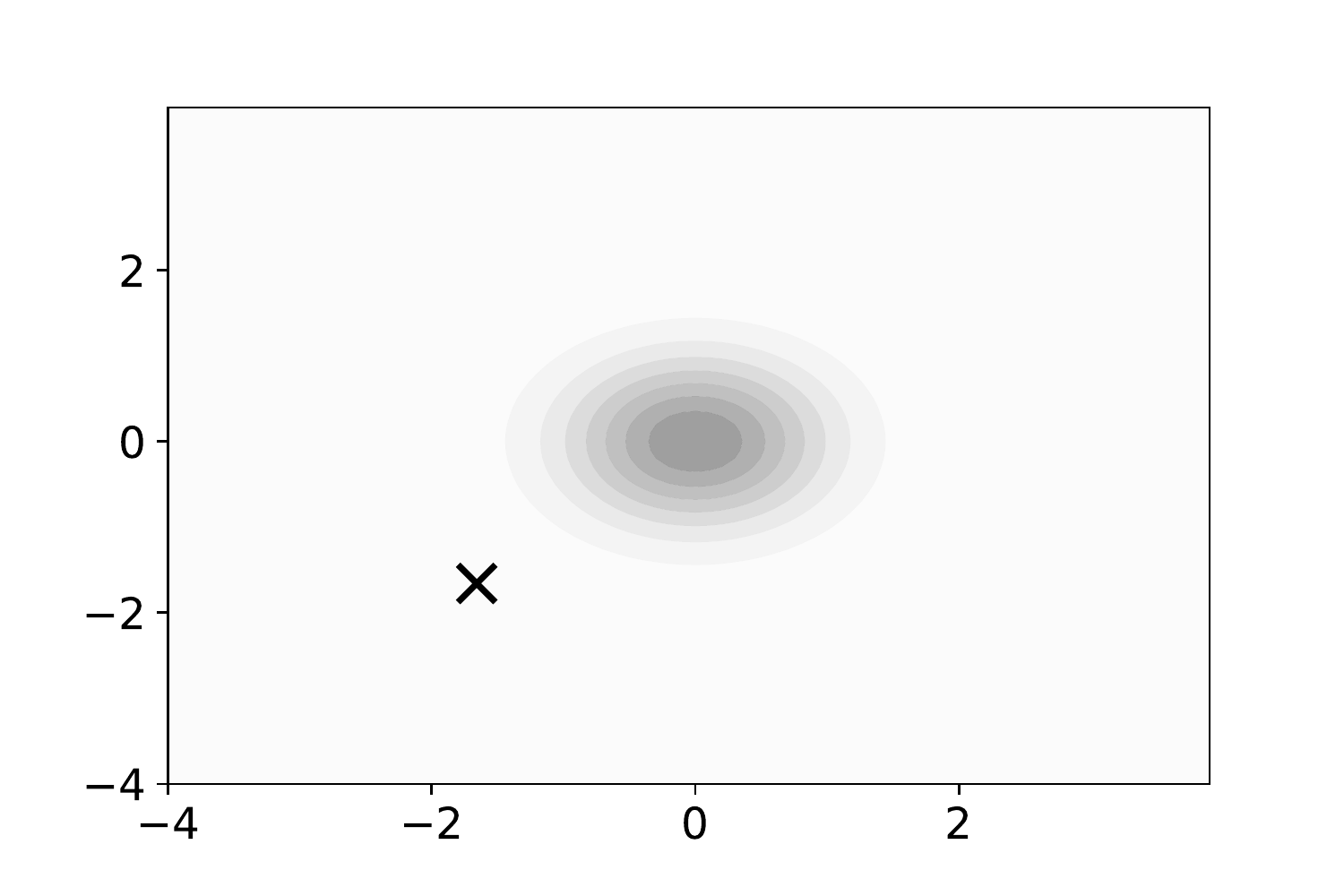}
  \includegraphics[scale = .35]{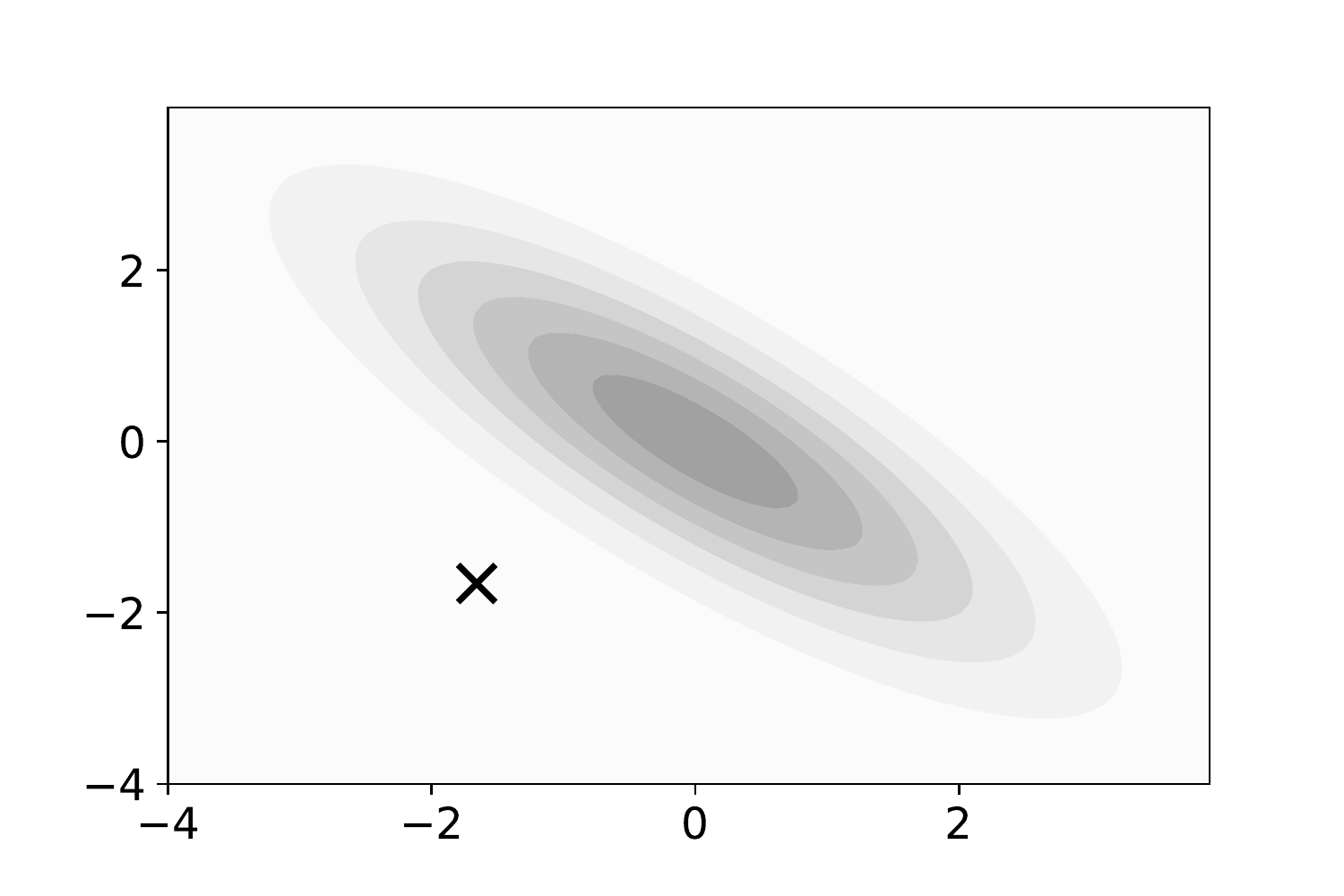} \\
  \includegraphics[scale = .35]{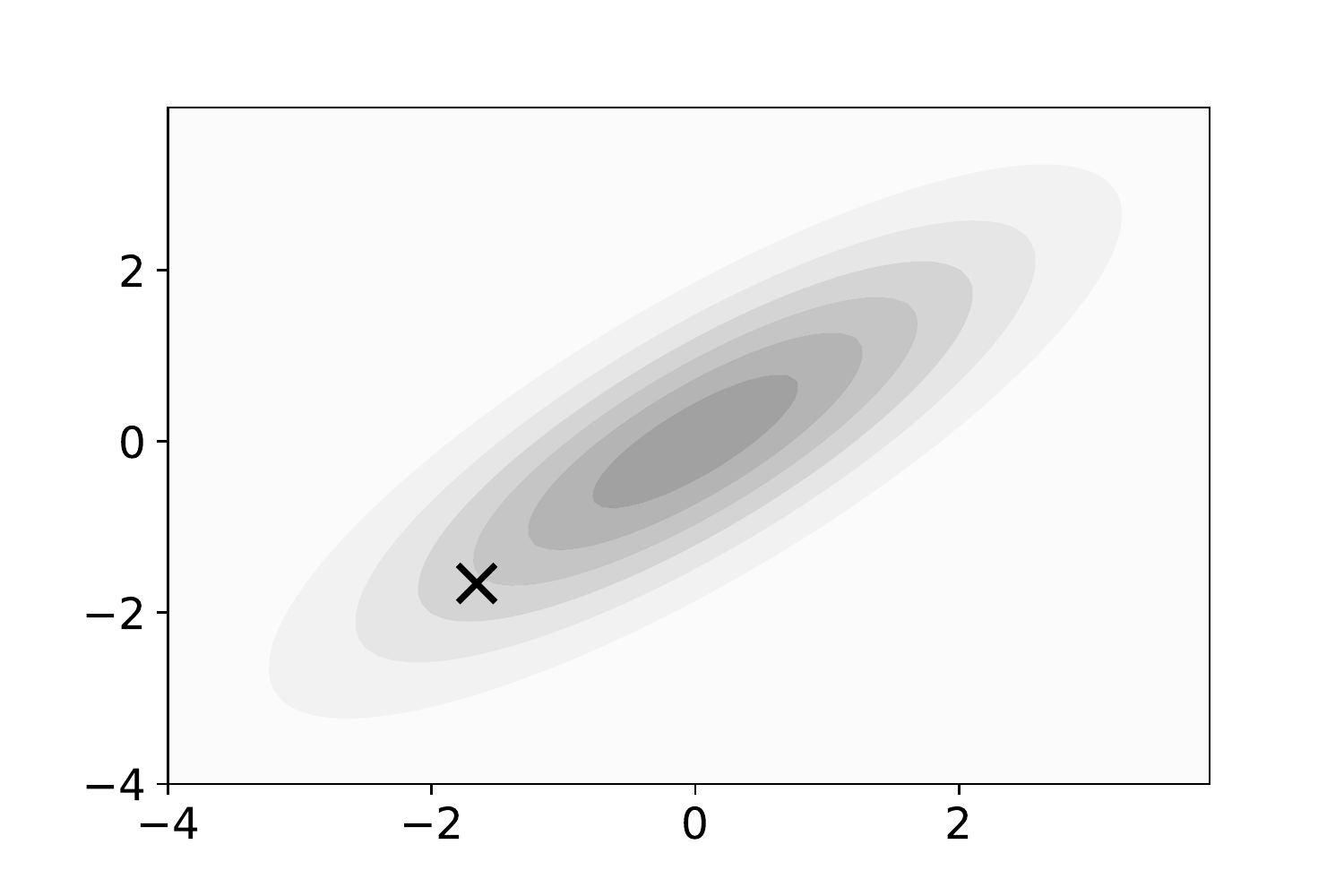}
  \caption{Assessment of calibration from $Z$-statistic and $S$-statistic. The contour plots represent the posterior distributions, and the symbol `$\times$' represents the solution.\\ 
 Top left: Both statistics decide that the solver is not  calibrated. Top right: The $S$-statistic decides that the solver
 is calibrated, while the $Z$-statistic does not. 
 Bottom: Both statistics decide that the solver is calibrated.}
  \label{F:ZSComparison}
\end{figure}

\subsubsection{$S$-statistic for BayesCG under the Krylov prior}
\label{S:BayesCGS}
We show that BayesCG under the Krylov prior is not calibrated, but that it is has similar performance to a calibrated solver under full posteriors and is optimistic under approximate posteriors.

\paragraph{Calibration of BayesCG under full Krylov posteriors.}
\tref{T:KrylovPrior} implies 
that the $S$-statistic for any solution $\xvec_*$ is equal to
\begin{equation*}
    \Sstat_m(\xvec_*) = \|\xvec_*-\xvec_m\|_\Amat^2 = \trace(\Amat\Gammat_m), \qquad 1 \leq m \leq \kry.
\end{equation*}
Thus, the $S$-statistic indicates that the size of Krylov posteriors is consistent with the errors, which is a desirable property of calibrated solvers.
However, BayesCG under the Krylov prior is not a calibrated solver
because  the traces of posterior covariances from  calibrated solvers
are distributed around the \emph{average} error instead of always being equal to the error.

\paragraph{Calibration of BayesCG under approximate posteriors.}

From~(\ref{Eq:StrakosTichy}) follows that $\trace(\Amat\widehat\Gammat_m)$ is concentrated around smaller values than the $S$-statistic; 
and the underestimate of the trace is equal to the Wasserstein distance between full and approximate Krylov posteriors in \tref{T:Wasserstein}. This underestimate points
to the optimism of BayesCG under approximate Krylov posteriors. 
This optimism is expected because
approximate posteriors model the uncertainty about $\xvec_*$ in a lower dimensional space than full  posteriors.

\section{Numerical experiments}
\label{S:Experiments}
We present numerical assessments of BayesCG calibration via the $Z$- and $S$-statistics. 

After describing the setup of the numerical experiments (\sref{S:ExpSetup}), 
we assess the calibration of three implementations of BayesCG:
(i) BayesCG with random search directions (\sref{S:ExpRand})---a solver known to be calibrated---so as to establish a baseline for comparisons with other versions of BayesCG;
(ii) BayesCG under the inverse prior (\sref{S:ExpInv});
and (iii) BayesCG under the Krylov prior (\sref{S:ExpKry}).
We choose the inverse prior and the Krylov priors because under each of these priors, the posterior mean from BayesCG coincides with the solution from CG.

\paragraph{Conclusions from all the experiments.}
Both, $Z$- and $S$ statistics indicate that BayesCG with random search directions is indeed a calibrated solver, and that BayesCG under the inverse prior is pessimistic.

The $S$-statistic indicates that 
BayesCG under full Krylov posteriors mimics a calibrated solver, and that
BayesCG under rank-50 approximate posteriors does as well, but not as much since it is 
slightly optimistic.  

However, among all versions, BayesCG under approximate Krylov posteriors is the only one that 
is computationally practical and that is competitive with CG.

\subsection{Experimental Setup}
\label{S:ExpSetup}
We describe the matrix $\Amat$ in the linear system (Section~\ref{s_exp3});
the setup of the $Z$- and $S$-statistic experiments (Section~\ref{s_exp1});
and the three BayesCG implementations (Section~\ref{s_exp2}).

\subsubsection{The matrix $\Amat$ in the linear system~(\ref{Eq:Axb})}\label{s_exp3}
The symmetric positive definite  matrix $\Amat\in\Rnn$ of dimension $n = 1806$
is a preconditoned version of the matrix BCSSTK14 from the Harwell-Boeing collection in \cite{bcsstk14}. Specifically,
\begin{equation*}
    \Amat = \Dmat^{-1/2}\Bmat\Dmat^{-1/2},\qquad \text{where} \quad
\Dmat\equiv \diag\begin{pmatrix}\Bmat_{11}& \cdots & \Bmat_{nn}\end{pmatrix}  
\end{equation*}
and $\Bmat$ is BCSSTK14.
Calibration is assessed at iterations $m = 10, 100, 300$.

\subsubsection{$Z$-statistic and $S$-statistic}\label{s_exp1}
The $Z$-statistic and $S$-statistic experiments are implemented as described in Algorithms \ref{A:ZStat} and~\ref{A:SStat}, respectively.
The calibration criteria for the $Z$-statistic are given in Remark~\ref{r_zeval}, and for the $S$-statistic in Remark~\ref{r_seval}.

We sample from Gaussian distributions 
by exploiting their stability. According to \lref{L:GaussStability} in Appendix~\ref{S:Aux},
if $\Zrv\sim\N(\zerovec,\Imat)$, and $\Fmat\Fmat^T = \Sigmat$ is a factorization of the covariance, then 
\begin{equation*}
    \Fmat\Zrv + \zvec = \Xrv \sim \N(\xvec,\Sigmat).
\end{equation*}
Samples $\Zrv\sim\N(\zerovec,\Imat)$ are generated with \texttt{randn}($n,1$)
in Matlab, and with \texttt{numpy.random.randn}($n,1$) in NumPy.

\paragraph{$Z$-statistic experiments.}
 We quantify the distance between the $Z$-statistic samples and the chi-squared distribution by applying the Kolmogorov-Smirnov statistic (\dref{D:KSTest})
to the empirical cumulative distribution function of the $Z$-statistic samples and the analytical cumulative distribution function of the chi-squared distribution.

The degree of freedom in the chi-squared distribution is chosen as the median numerical rank of the posterior covariances. Note that the numerical rank of $\Sigmat_m$ can differ from
\begin{equation*}
  \rank(\Sigmat_m) = \rank(\Sigmat_0) - m,
\end{equation*}
and choosing the median rank gives an integer equal to the rank of at least one posterior covariance.

In compliance with the Matlab function \texttt{rank} and the NumPy function \texttt{numpy.linalg.rank}, we compute the numerical rank of $\Sigmat_m$ as
\begin{equation}
\label{Eq:NumRank}
  \rank(\Sigmat_m) = \mathrm{cardinality} \{\sigma_i \ | \ \sigma_i > n\varepsilon \|\Sigmat_m\|_2\},
\end{equation}
where $\varepsilon$ is machine epsilon and $\sigma_i$, $1\leq i \leq n$, are the singular values of $\Sigmat_m$ \cite[Section 5.4.1]{GoVa13}. 

\subsubsection{Three BayesCG implementations}\label{s_exp2}

We use three versions of BayesCG: BayesCG with random search directions, BayesCG under the inverse prior, and BayesCG under the Krylov prior.

\paragraph{BayesCG with random search directions.}
The implementation in \aref{A:BayesCGC} in \appref{S:BayesCGC} computes
posterior covariances that do not depend on the solution $\xvec_*$.
This, in turn, requires search directions that do not depend on $\xvec_*$ \cite[Section 1.1]{CIOR20} which is achieved by starting with a random 
search direction $\svec_1=\uvec\sim\N(\zerovec,\Imat)$ instead of the initial residual
$\rvec_0\equiv\bvec_0-\Amat\xvec_0$.
The prior is $\N(\zerovec,\Amat^{-1})$.
 
 By design, this version of BayesCG is
 calibrated. However, it is also impractical due to its slow convergence,
see \figref{F:Convergence},
and an accurate solution is available only after
$n$ iterations. The random initial search direction~$\svec_1$ leads to uninformative $m$-dimensional subspaces,
so that the solver has to explore all of $\Rn$ before finding the solution.

\begin{figure}
  \centering
  \includegraphics[scale = .35]{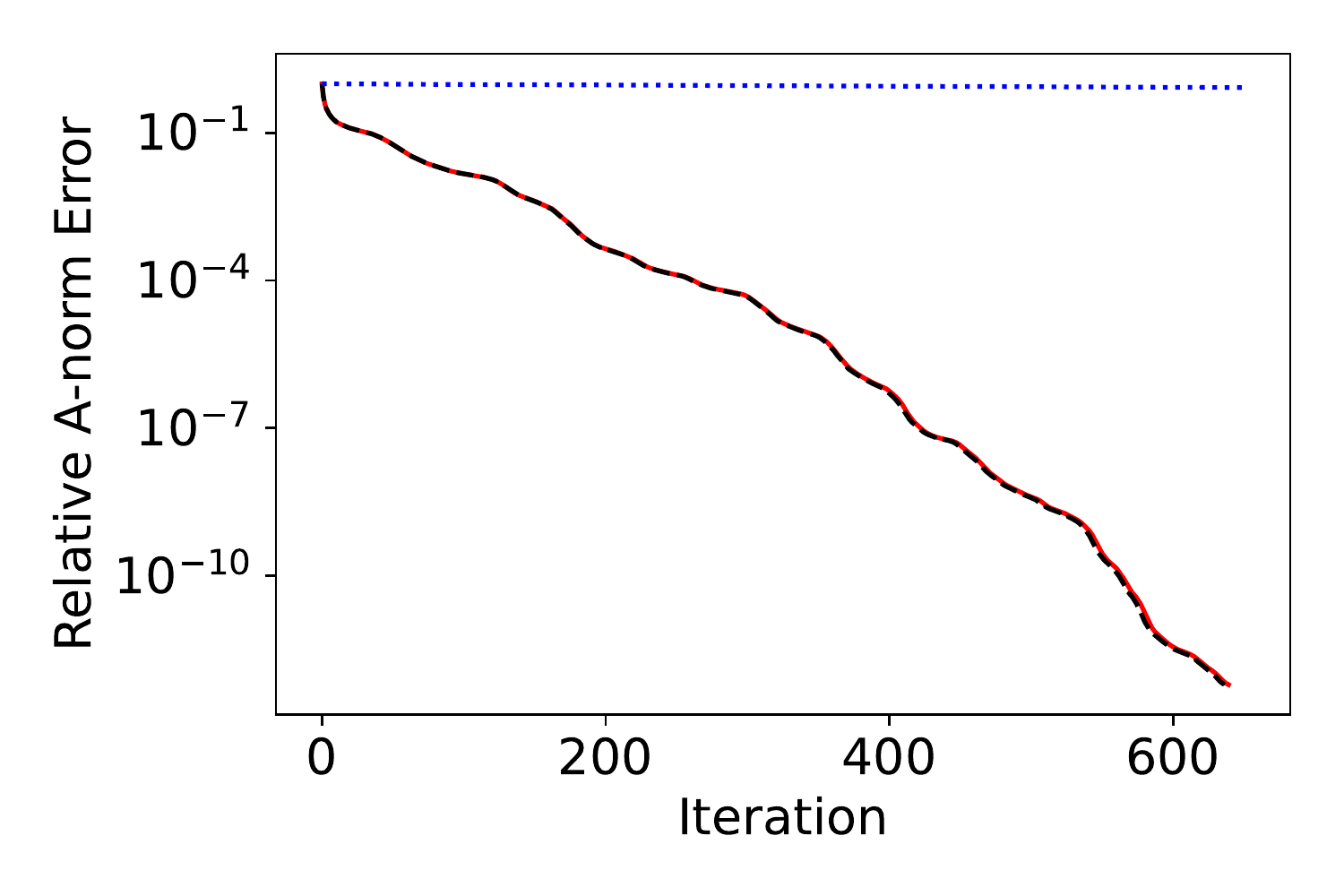}
  \includegraphics[scale = .35]{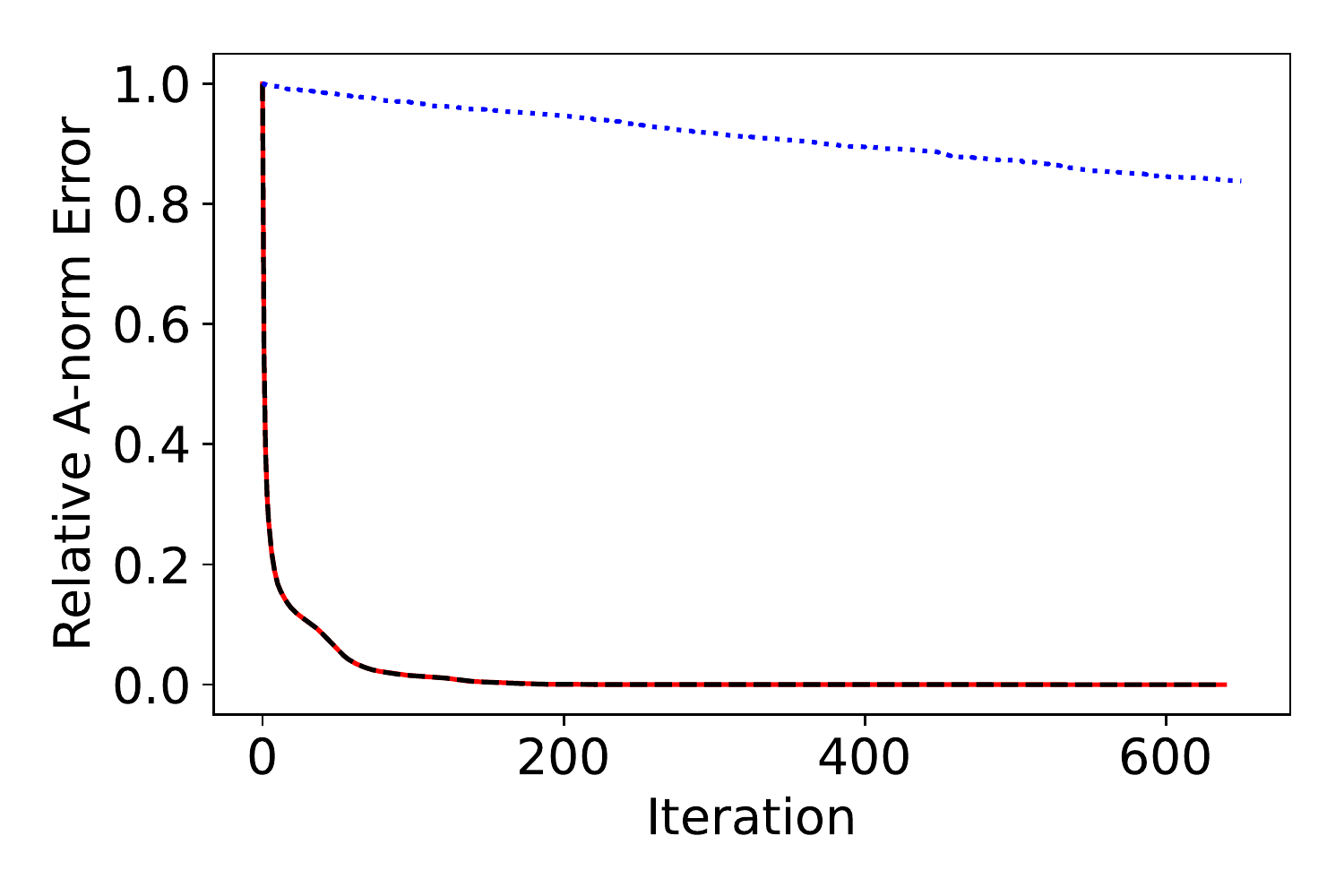}
  \caption{Relative error $\|\xvec_*-\xvec_m\|_\Amat^2/\|\xvec_*\|_\Amat^2$ for BayesCG under the inverse prior (solid line) and Krylov prior (dashed line), and  BayesCG with random search directions (dotted line). Vertical axis has a logarithmic scale (left plot) and a linear scale (right plot).}
  \label{F:Convergence}
\end{figure}

\paragraph{BayesCG under the inverse prior $\mu_0 \equiv \N(\zerovec,\Amat^{-1})$.}
The implementation in Algorithm~\ref{A:BayesCGF} in \appref{S:BayesCGF} 
is a modified version of \aref{A:BayesCG} for general priors that maintains the posterior covariances in factored form. 

\paragraph{BayesCG under the Krylov prior.}
For full posteriors, the modified Lanczos solver \aref{A:ALanczos} in  \appref{S:BayesCGK}
computes the full prior, which is then followed by
the direct computation of the posteriors
from the prior in \aref{A:BayesCGC}.

For approximate posteriors, \aref{A:BayesCGWithoutBayesCG}
in \appref{S:BayesCGK} computes rank-$d$ covariances at the same computational cost as $m+d$ iterations of CG.

In $Z$- and $S$-statistic experiments, solutions $\xvec_*$ are usually sampled from the prior distribution. We cannot sample solutions from the Krylov prior because it differs from solution to solution. Instead we sample solutions from the reference
distribution $\N(\zerovec,\Amat^{-1})$. This is a reasonable choice because 
the posterior means in
BayesCG under the inverse and Krylov priors coincide with the 
CG iterates \cite[Section 3]{Cockayne:BCG}.

\subsection{BayesCG with random search directions}
\label{S:ExpRand}

By design, BayesCG with random search directions is a calibrated solver.
The purpose is to establish a baseline for comparisons with BayesCG under the inverse and Krylov priors, and to demonstrate that the $Z$- and $S$-statistics perform as expected on a calibrated solver. 

\paragraph{Summary of experiments below.} 
Both, $Z$- and $S$-statistics strongly
confirm that BayesCG with random search directions is indeed a calibrated solver, thereby corroborating the statements in
\tref{T:ZStat} and \dref{D:SStat}. 

\begin{figure}
  \centering
  \includegraphics[scale = .35]{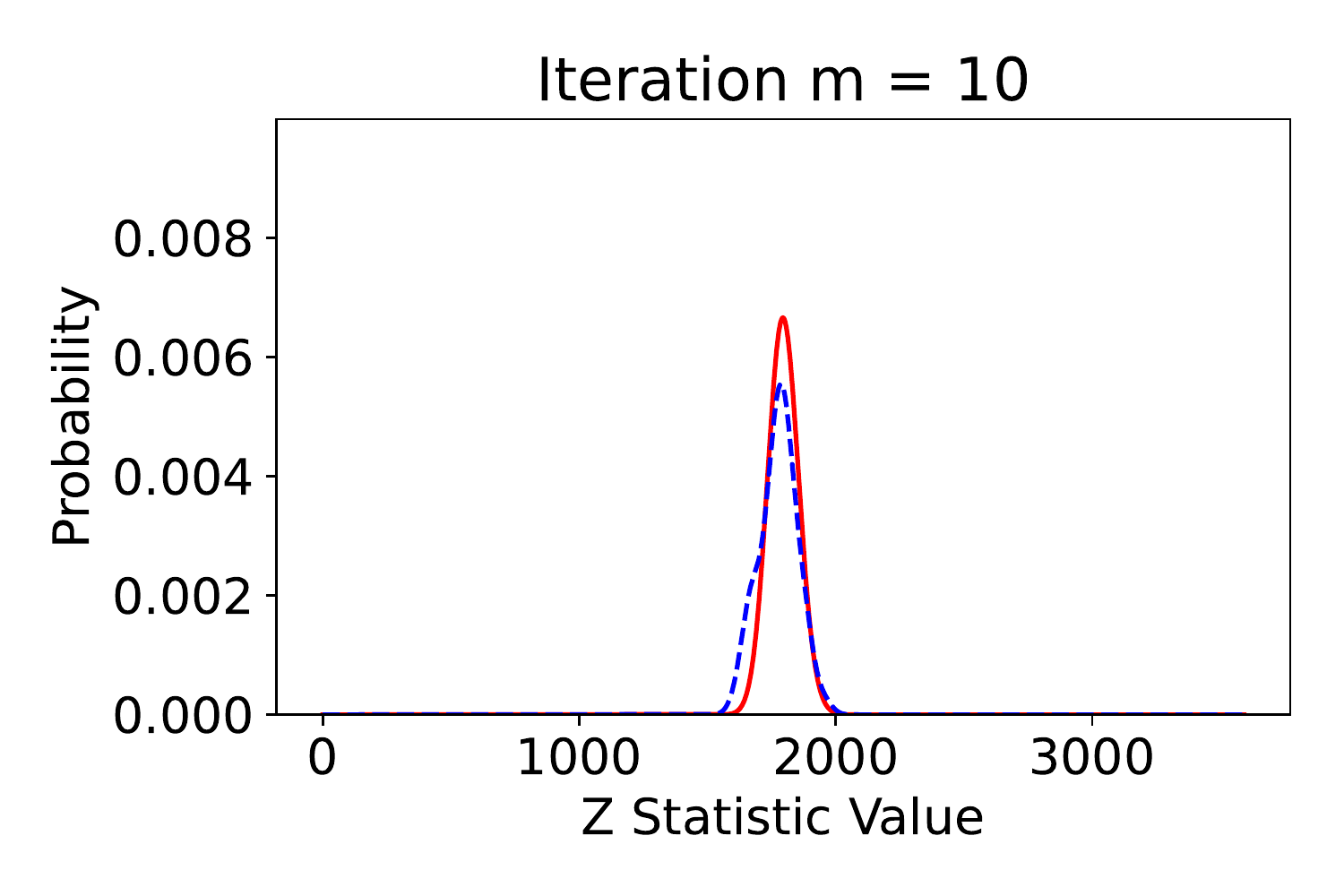}
  \includegraphics[scale = .35]{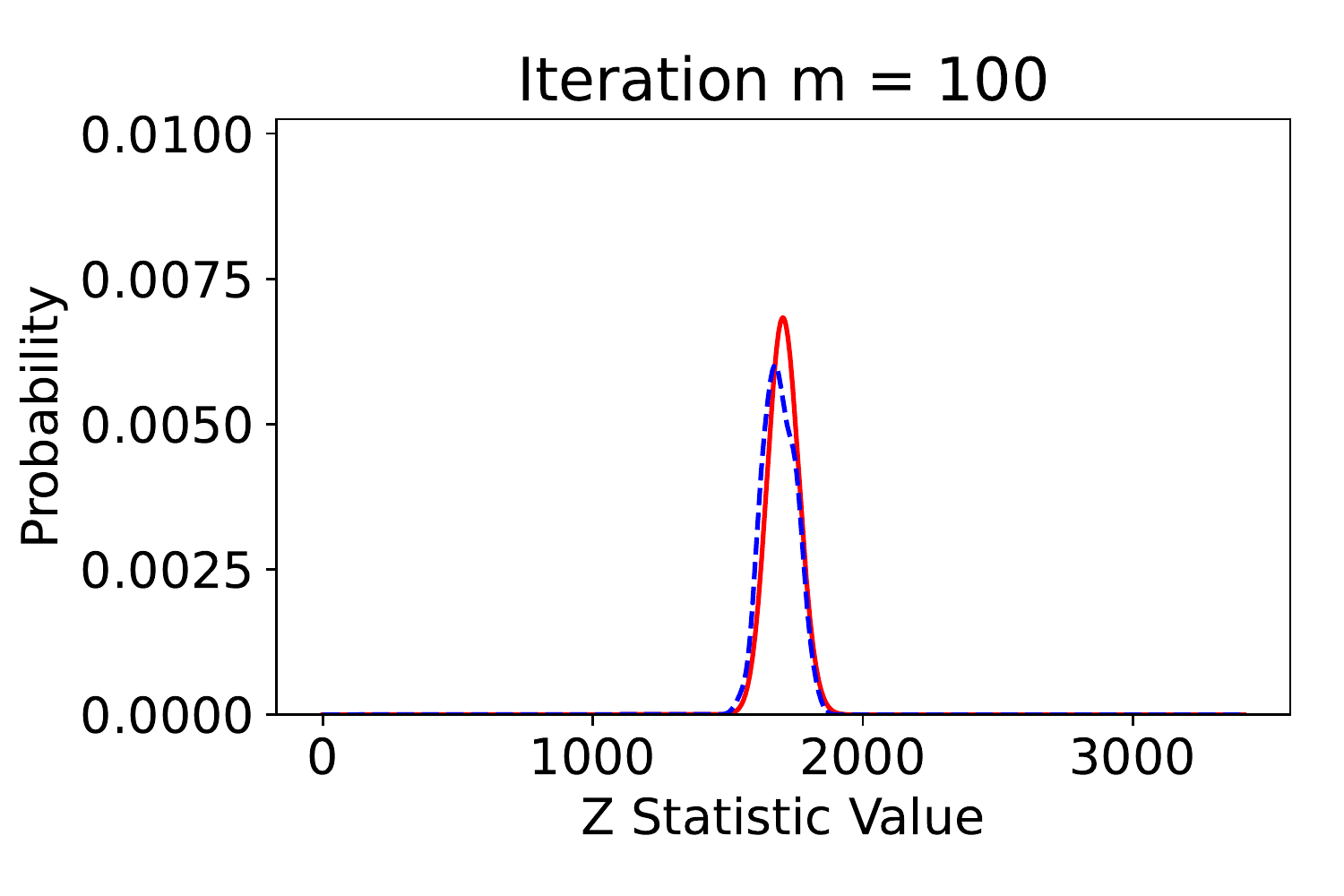} \\
  \includegraphics[scale = .35]{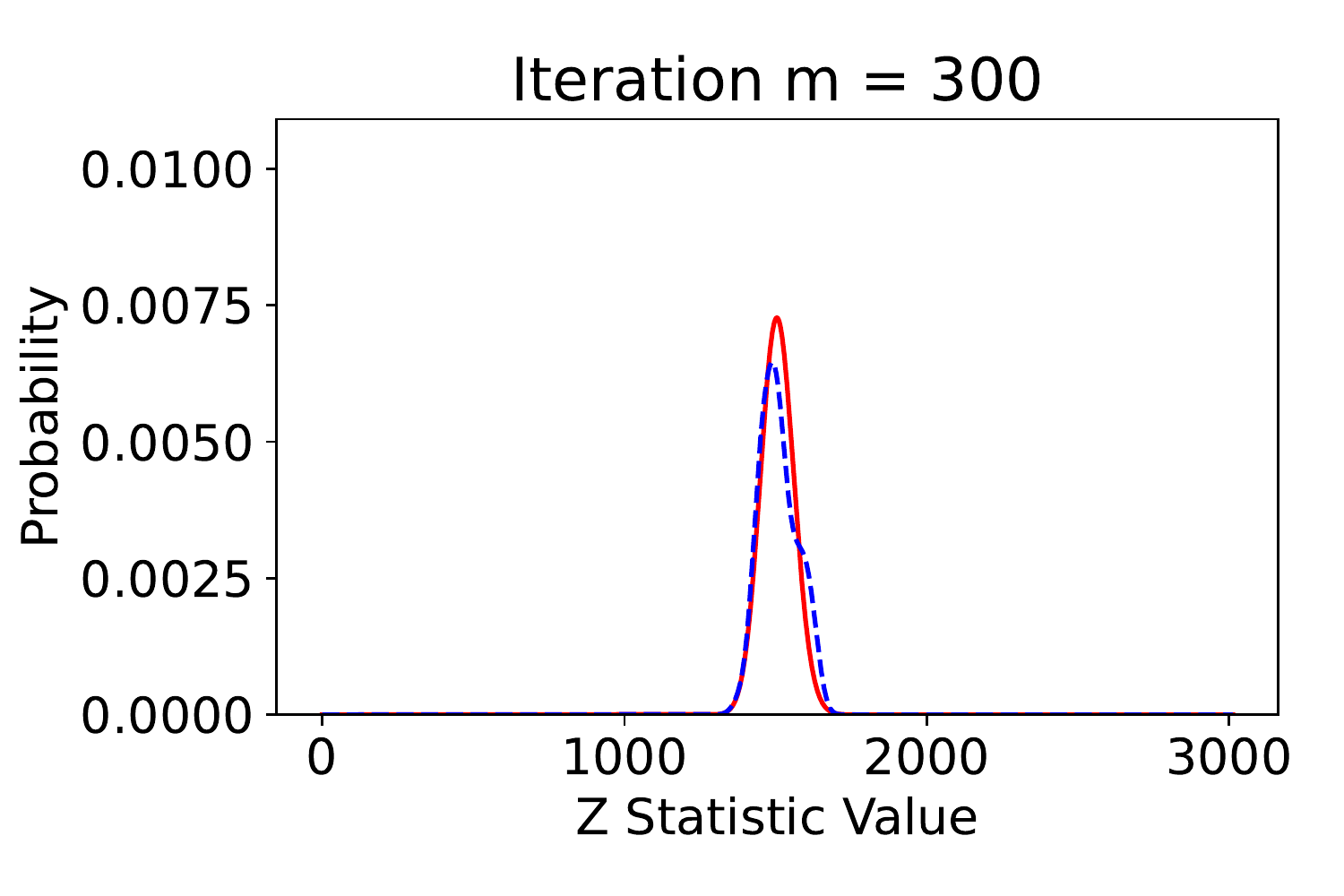}
  \caption{$Z$-statistic samples for BayesCG with random search directions after $m=10, 100, 300$ iterations. The solid curve represents the chi-squared distribution and the dashed curve
  the $Z$-statistic samples.}
  \label{F:ZRand}
\end{figure}

\begin{table}
    \centering
    \begin{tabular}{c|ccc}
    Iteration & $Z$-stat mean & $\chi^2$ mean & K-S statistic \\\hline
         $ 10.0 $ & $ 1.79 \times 10^{3} $ & $ 1.8 \times 10^{3} $ & $ 0.139 $ \\
$ 100.0 $ & $ 1.69 \times 10^{3} $ & $ 1.71 \times 10^{3} $ & $ 0.161 $ \\
$ 300.0 $ & $ 1.5 \times 10^{3} $ & $ 1.51 \times 10^{3} $ & $ 9.65 \times 10^{-2} $ \\
    \end{tabular}
    \caption{This table corresponds to \figref{F:ZRand}. For BayesCG with random search directions, it shows the
    $Z$-statistic sample means; the chi-squared distribution means; and the Kolmogorov-Smirnov statistic between the $Z$-statistic samples and the chi-squared distribution.}
    \label{Tab:ZRand}
\end{table}

\begin{figure}
  \centering
  \includegraphics[scale = .35]{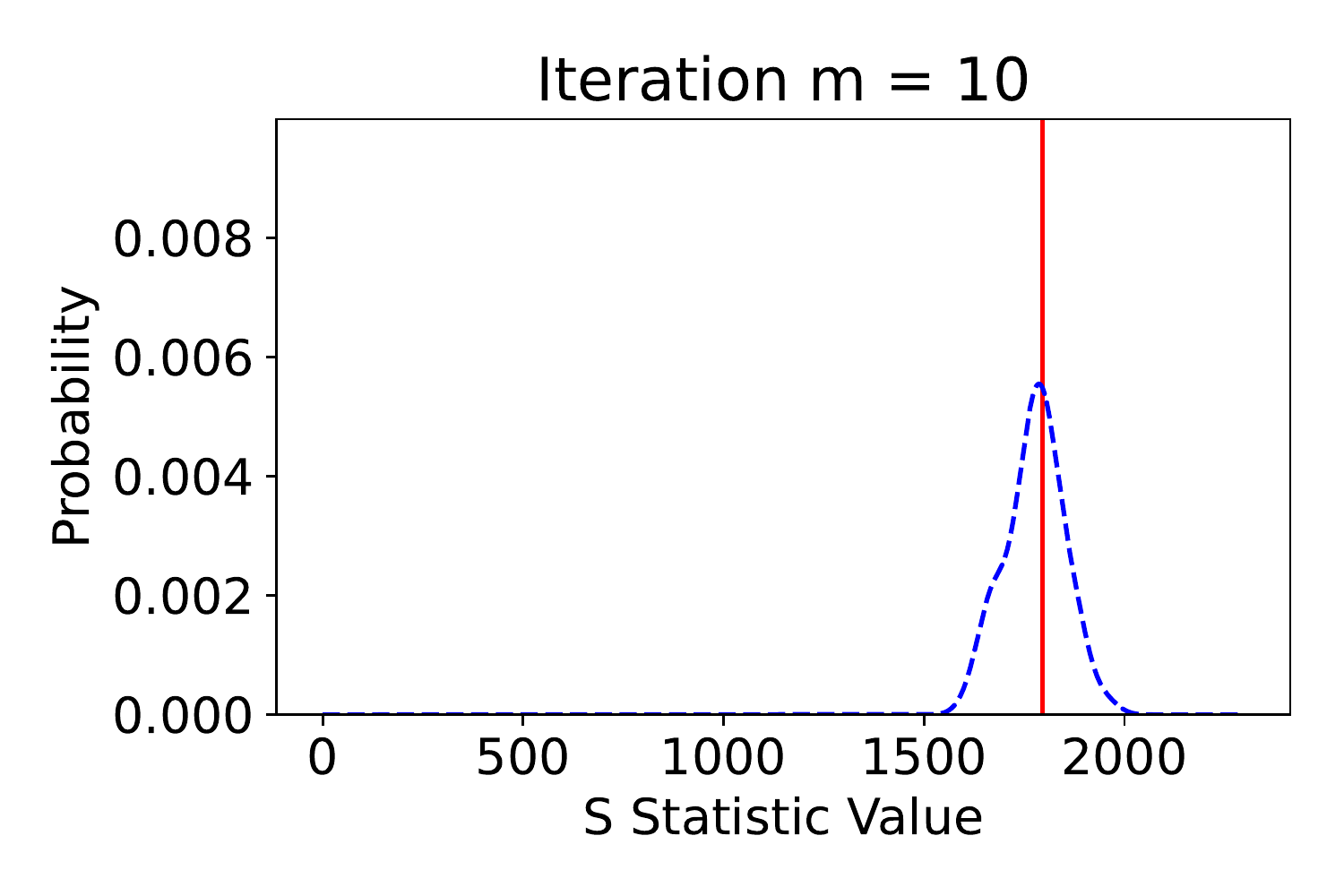} 
  \includegraphics[scale = .35]{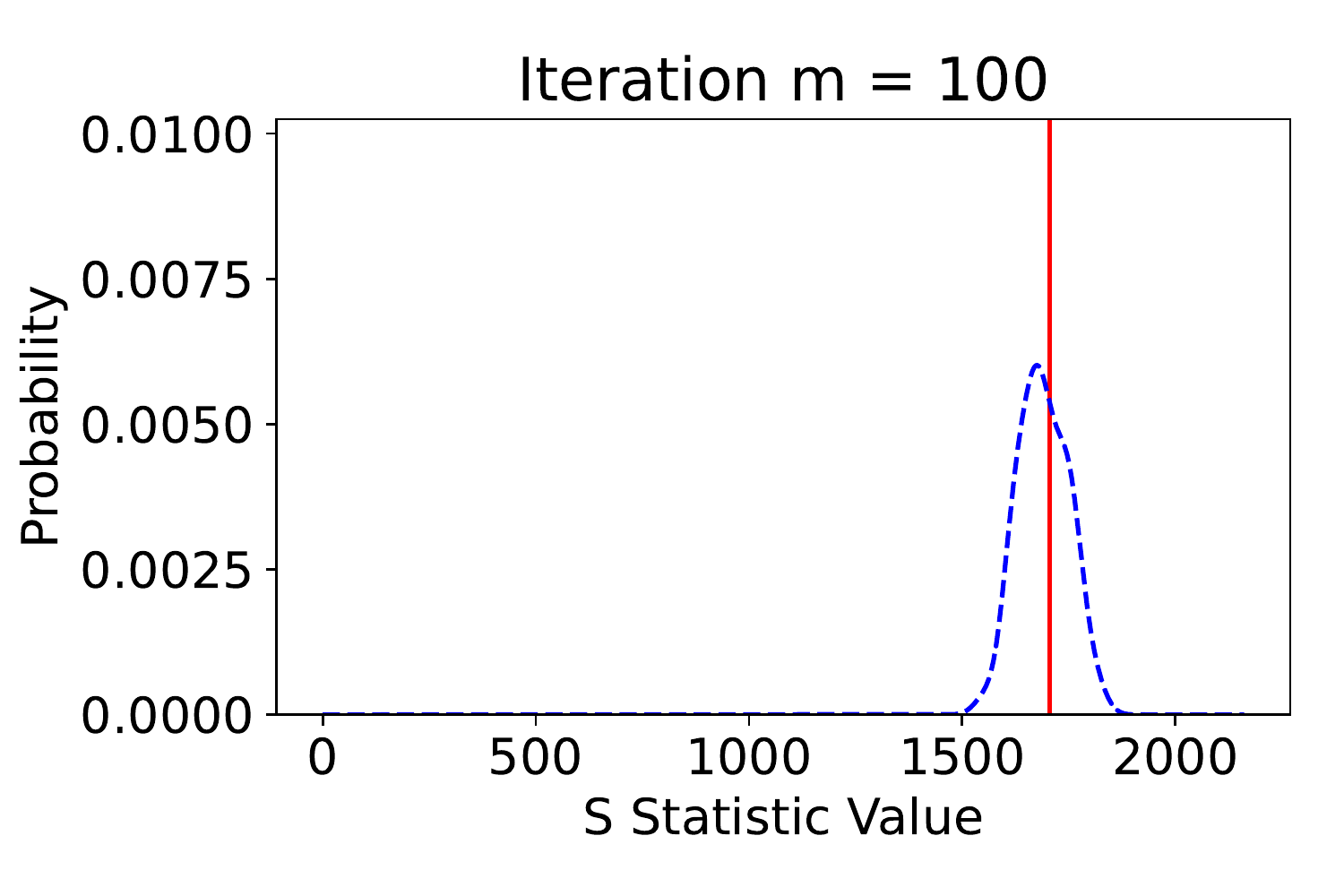} \\
  \includegraphics[scale = .35]{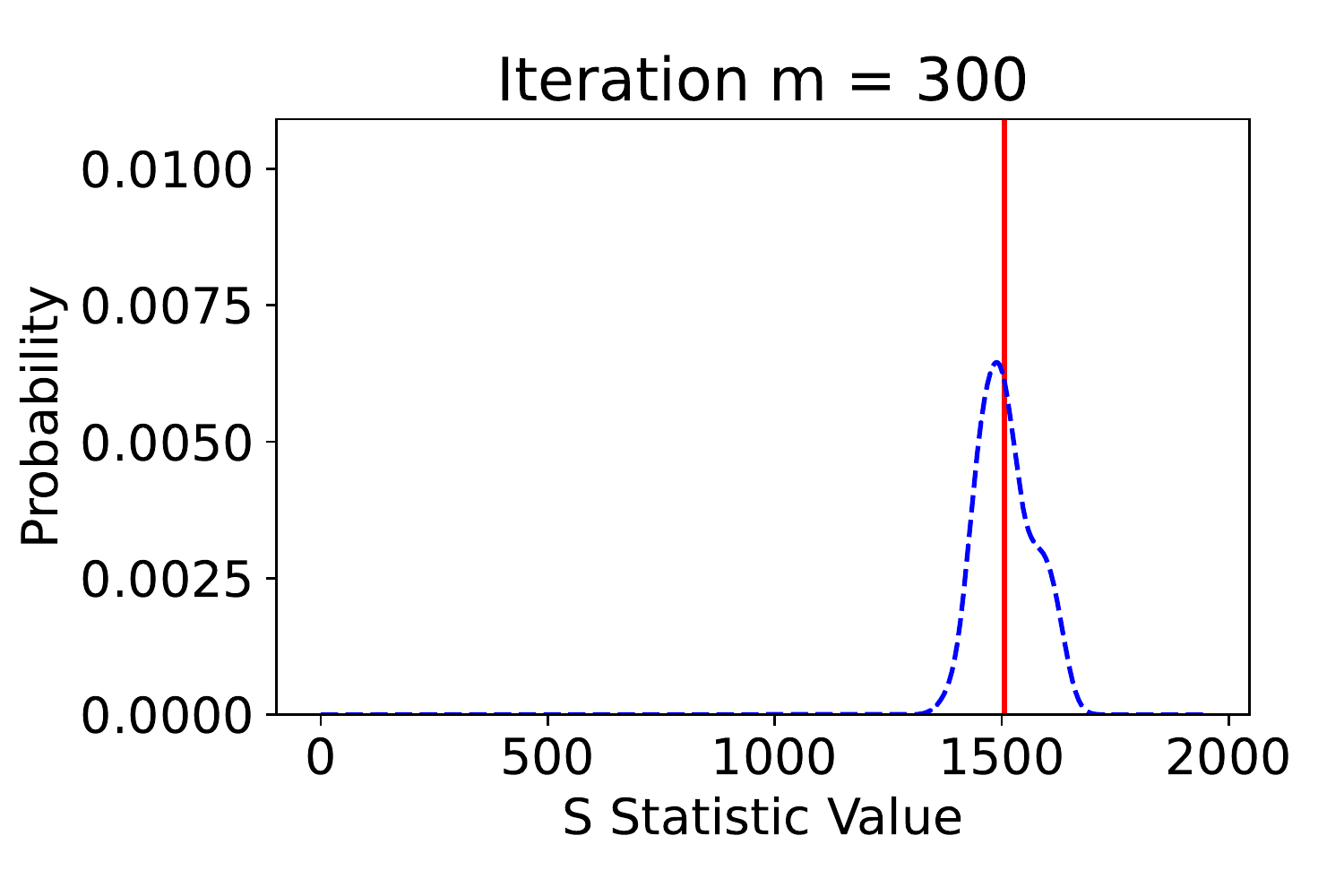}
  \caption{$S$-statistic samples and traces for BayesCG with random search directions
 after $m=10, 100, 300$ iterations. The solid curve represents the traces and the dashed curve the $S$-statistic samples.}
  \label{F:SRand}
\end{figure}

\begin{table}
    \centering
    \begin{tabular}{c|ccc}
    Iteration & $S$-stat mean & Trace mean & Trace standard deviation \\\hline
         $ 10.0 $ & $ 1.78 \times 10^{3} $ & $ 1.8 \times 10^{3} $ & $ 2.93 \times 10^{-12} $ \\
$ 100.0 $ & $ 1.69 \times 10^{3} $ & $ 1.71 \times 10^{3} $ & $ 2.29 \times 10^{-12} $ \\
$ 300.0 $ & $ 1.51 \times 10^{3} $ & $ 1.51 \times 10^{3} $ & $ 2.07 \times 10^{-12} $ \\
    \end{tabular}
    \caption{This table corresponds to \figref{F:SRand}. For BayesCG with random search directions, it shows the
    $S$-statistic sample means, the trace  means, and the trace standard deviations.}
    \label{Tab:SRand}
\end{table}

\paragraph{\figref{F:ZRand} and \tabref{Tab:ZRand}.}
The $Z$-statistic samples in \figref{F:ZRand} almost match
 the chi-squared distribution;
 and the Kolmogorov-Smirnov statistics in 
 \tabref{Tab:ZRand}
 are on the order of $10^{-1}$, meaning close to zero. This confirms that BayesCG with random search directions is indeed calibrated. 

\paragraph{\figref{F:SRand} and \tabref{Tab:SRand}.}
The traces in \figref{F:SRand} 
are tightly concentrated around the empirical mean of the $S$-statistic samples. \tabref{Tab:SRand} confirms
the strong clustering of the trace
and $S$-statistic sample means around $10^{-3}$, together with the very small deviation of the traces.
Thus, the area in which the posteriors are concentrated is consistent with the error, 
confirming again that BayesCG with random search directions is calibrated.

\subsection{BayesCG under the inverse prior $\mu_0 = \N(\zerovec,\Amat^{-1})$.}
\label{S:ExpInv}

\paragraph{Summary of experiments below.}
Both, $Z$- and $S$-statics indicate
that BayesCG under the inverse prior is pessimistic, and that the
pessimism increases with the iteration count. This is consistent with the experiments in \cite[Section 6.1]{Cockayne:BCG}.

\begin{figure}
  \centering
  \includegraphics[scale = .35]{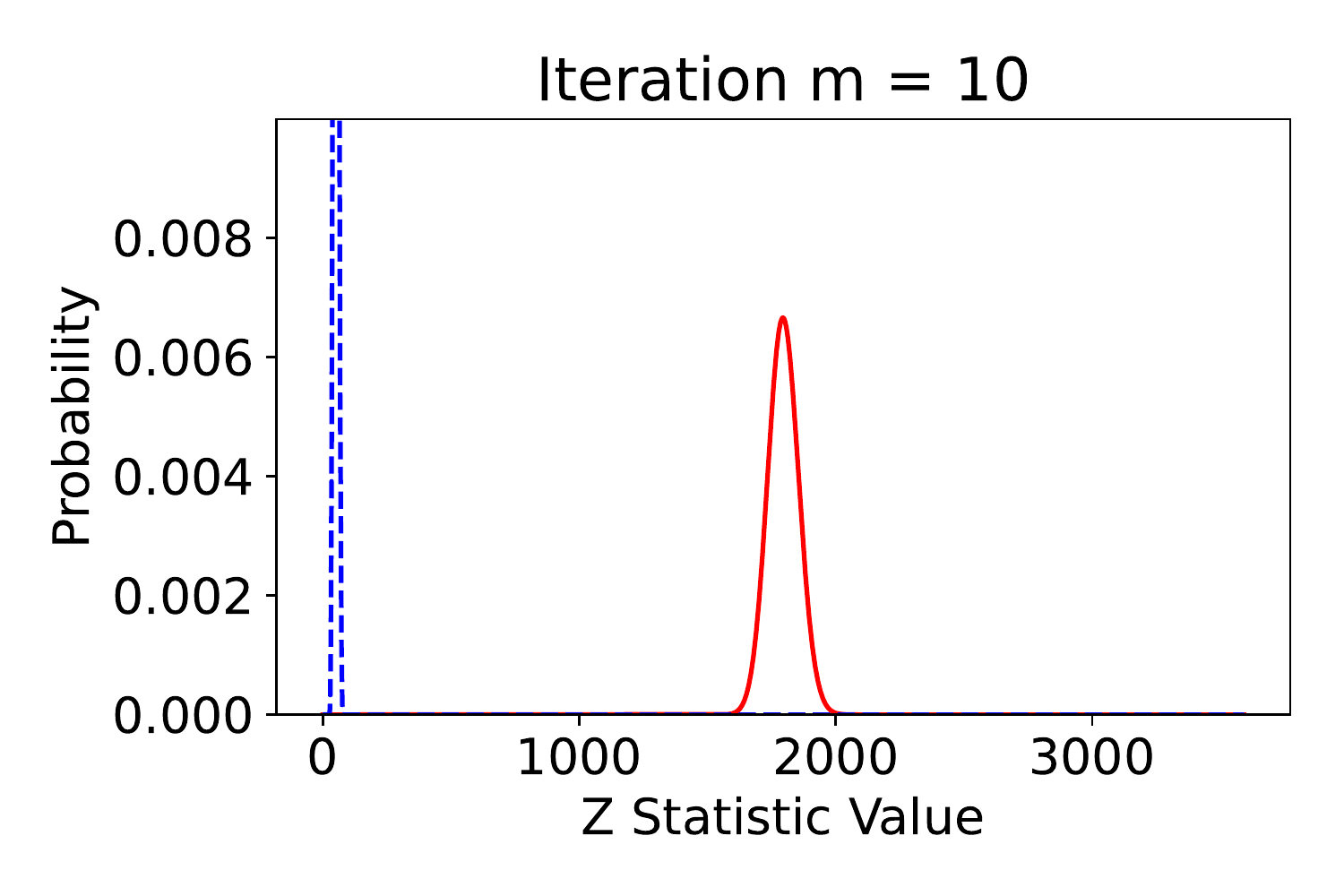}
  \includegraphics[scale = .35]{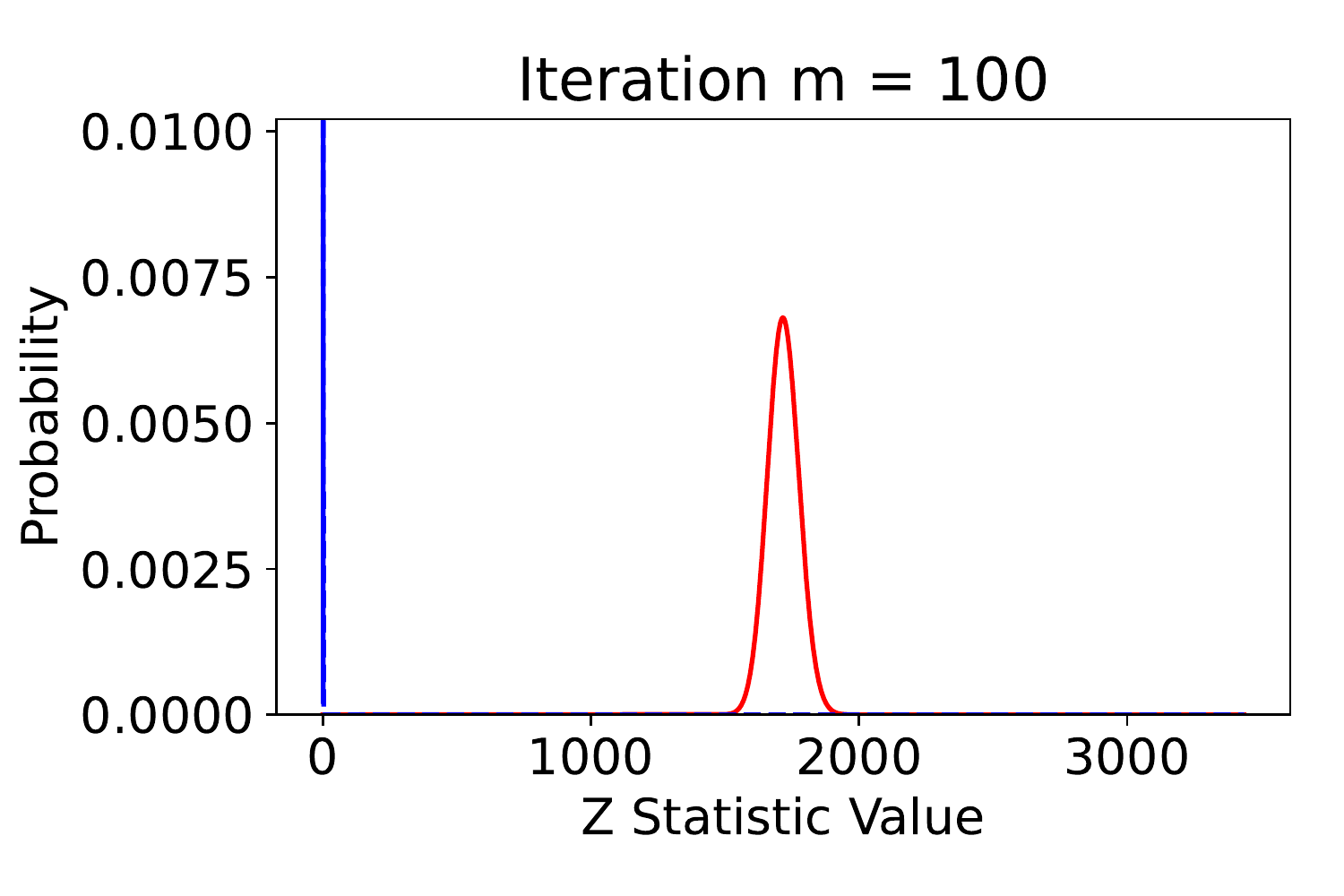} \\
  \includegraphics[scale = .35]{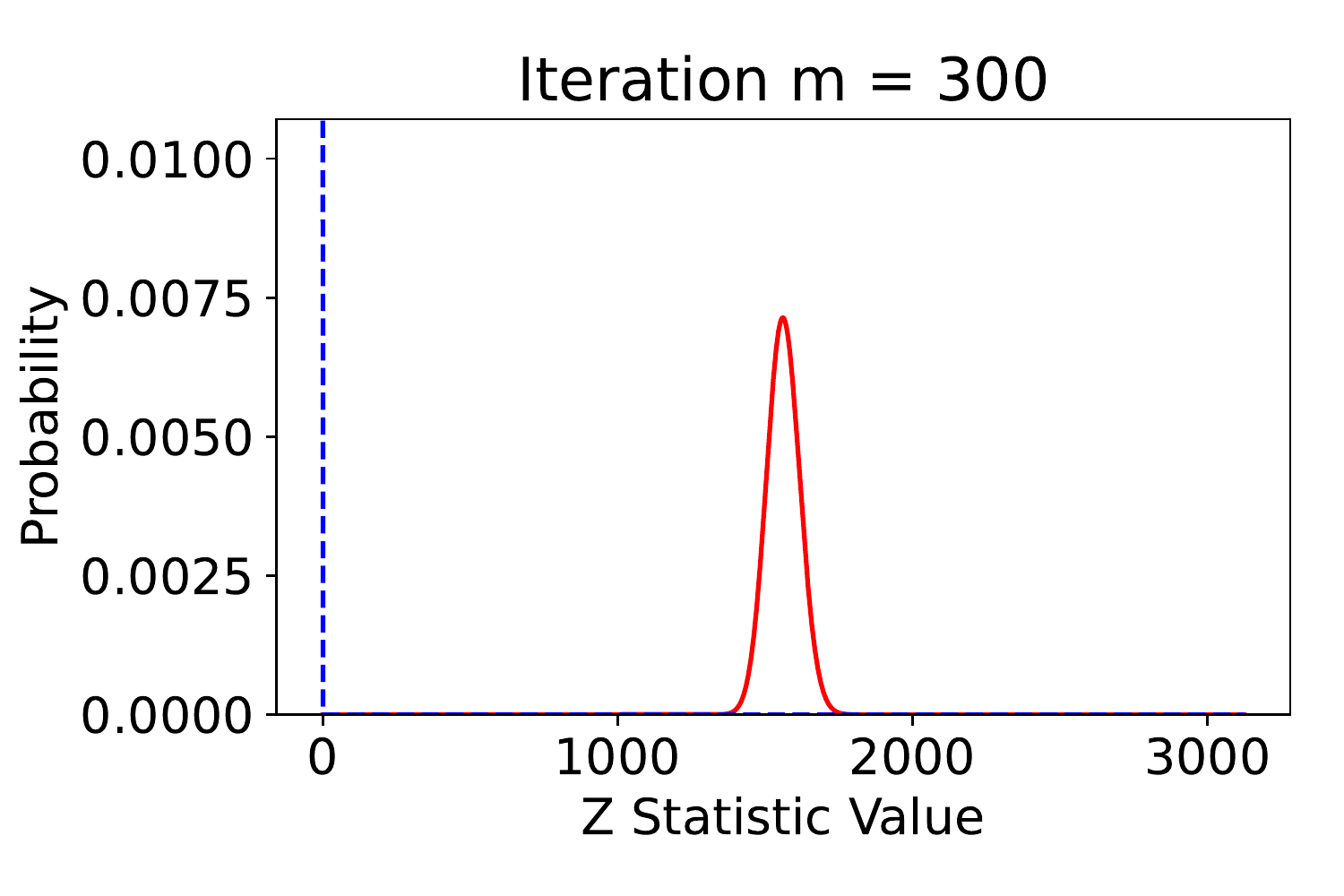}
  \caption{$Z$-statistic samples for BayesCG under the inverse prior after $m=10, 100, 300$ iterations. The solid curve represents the chi-squared distribution and the dashed curve the  $Z$-statistic samples.}
  \label{F:ZInv}
\end{figure}

\begin{table}
    \centering
    \begin{tabular}{c|ccc}
    Iteration  & $Z$-stat mean & $\chi^2$ mean & K-S statistic \\\hline
         $ 10.0 $ & $ 51.9 $ & $ 1.8 \times 10^{3} $ & $ 1.0 $ \\
$ 100.0 $ & $ 0.545 $ & $ 1.72 \times 10^{3} $ & $ 1.0 $ \\
$ 300.0 $ & $ 1.33 \times 10^{-5} $ & $ 1.56 \times 10^{3} $ & $ 1.0 $ \\
    \end{tabular}
    \caption{This table corresponds to \figref{F:ZInv}. For BayesCG under the inverse prior, it shows
    the $Z$-statistic sample means; the chi-squared distribution means; and Kolmogorov-Smirnov statistic between the $Z$-statistic samples and the chi-squared and distribution. }
    \label{Tab:ZInv}
\end{table}

\begin{figure}
  \centering
  \includegraphics[scale = .35]{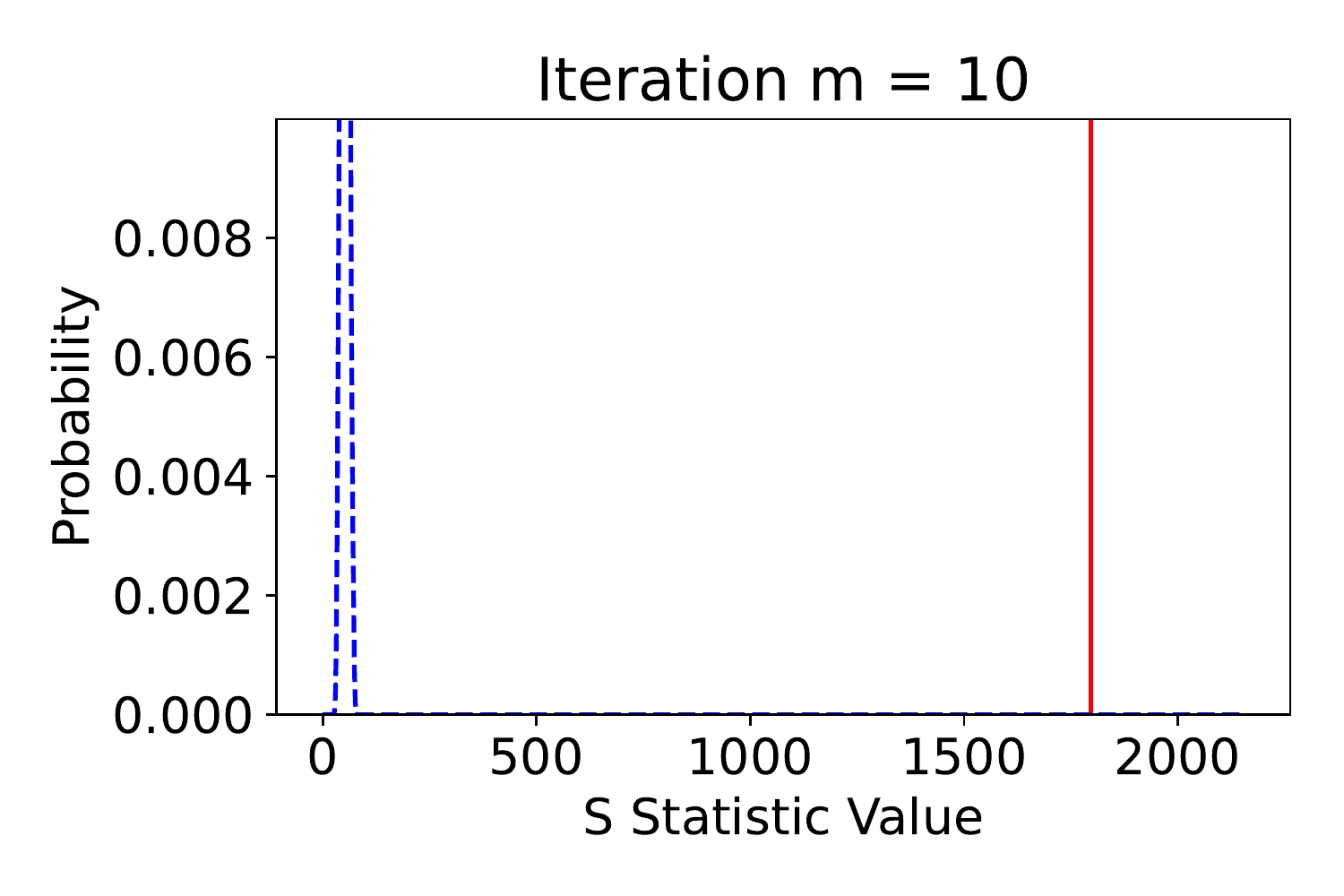} 
  \includegraphics[scale = .35]{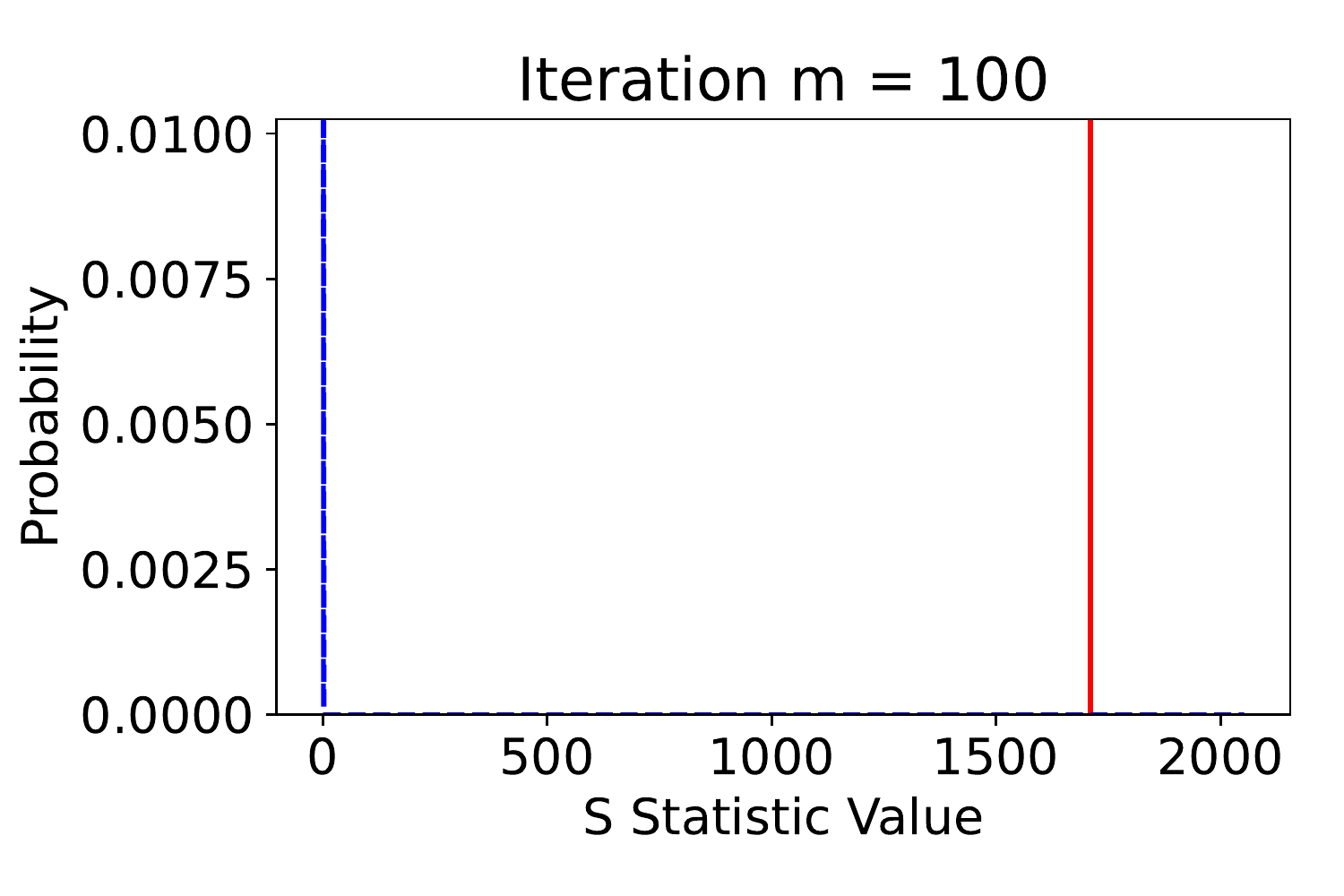} \\
  \includegraphics[scale = .35]{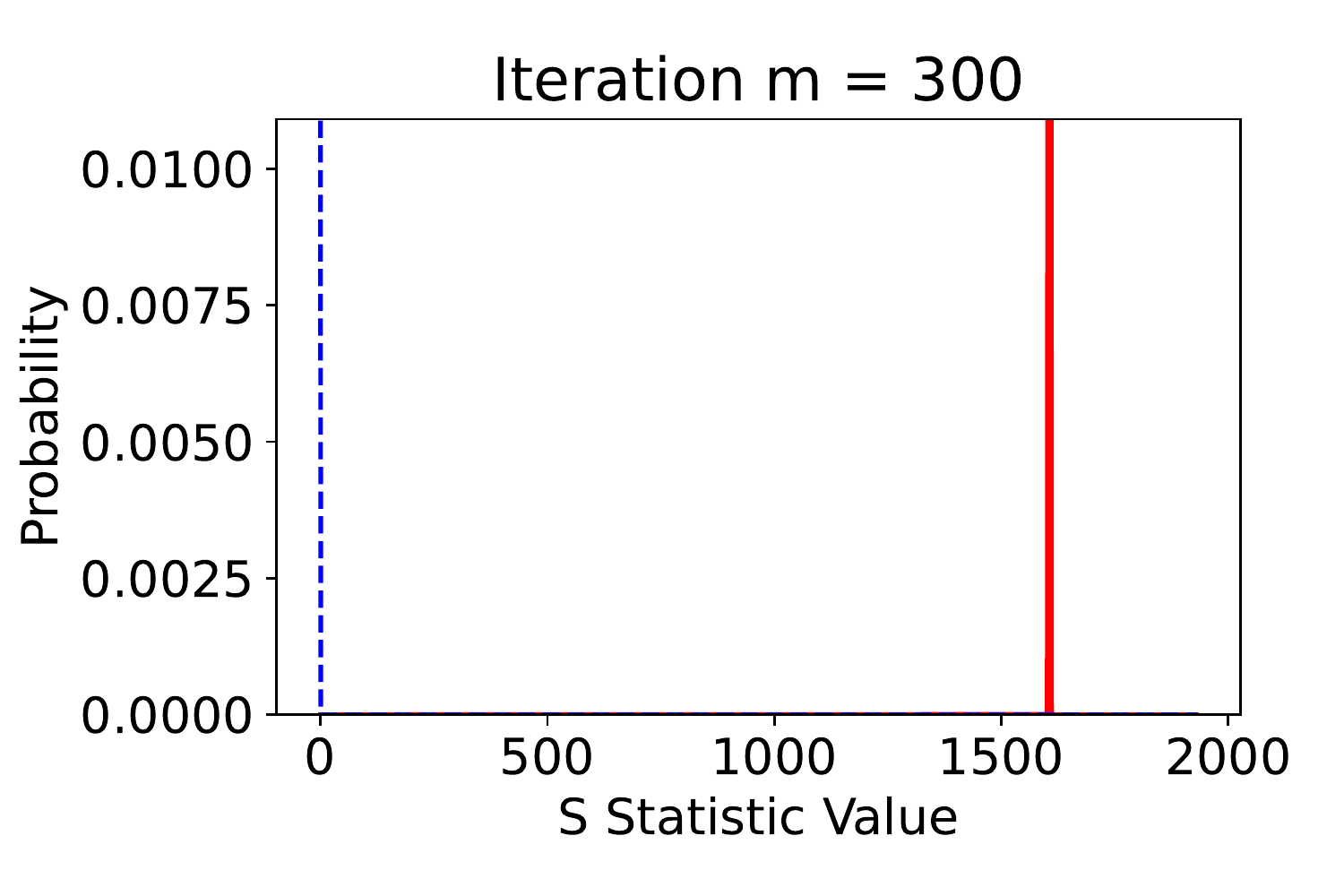} 
  \caption{$S$-statistic samples and traces for BayesCG under the inverse prior after $m=10, 100, 300$ iterations. The solid curve represents the traces and the dashed curve the $S$-statistic samples.}
  \label{F:SInv}
\end{figure}

\begin{table}
    \centering
    \begin{tabular}{c|ccc}
    Iteration & $S$-stat mean & Trace mean & Trace standard deviation \\\hline
$ 10.0 $ & $ 51.8 $ & $ 1.8 \times 10^{3} $ & $ 2.99 \times 10^{-12} $ \\
$ 100.0 $ & $ 0.574 $ & $ 1.71 \times 10^{3} $ & $ 0.164 $ \\
$ 300.0 $ & $ 3.57 \times 10^{-6} $ & $ 1.61 \times 10^{3} $ & $ 1.02 $ \\
    \end{tabular}
    \caption{This table corresponds to \figref{F:SInv}. For BayesCG under the inverse prior,
    it shows the $S$-statistic sample means, the trace means, and the trace standard deviations.}
    \label{Tab:SInv}
\end{table}

\paragraph{\figref{F:ZInv} and \tabref{Tab:ZInv}.}
 The  $Z$-statistic samples in 
 \figref{F:ZInv}
are concentrated around smaller values than the predicted chi-squared distribution. 
The Kol\--mogorov-Smirnov statistics in \tabref{Tab:ZInv}
are all equal to 1,
indicating no overlap between $Z$-statistic samples and chi-squared distribution. The first two columns of
\tabref{Tab:ZInv} show that
$Z$-statistic samples move further away from the chi-squared distribution as the iterations progress.  
Thus, BayesCG under the inverse prior is pessimistic, and the
pessimism increases with the iteration count. 

\paragraph{\figref{F:SInv} and \tabref{Tab:SInv}.}
The $S$-statistic samples in \figref{F:SInv}
are concentrated around smaller values than the traces.
\tabref{Tab:SInv} indicates trace values at $10^3$, while the $S$-statistic samples move towards zero as the iteration progress.
Thus the errors are much smaller than the area in which the posteriors are concentrated, meaning the posteriors overestimate the error. This again confirms the pessimism of BayesCG under the inverse prior.

\subsection{BayesCG under the Krylov prior}
\label{S:ExpKry}
We consider full posteriors (Section~\ref{S:ExpFull}),
and then rank-50 approximate posteriors (Section~\ref{S:ExpApprox}).

\subsubsection{Full Krylov posteriors}
\label{S:ExpFull}

\paragraph{Summary of experiments below.}
The $Z$-statistic indicates that BayesCG under full Krylov posteriors is somewhat optimistic,
while the $S$-statistic indicates resemblance to a calibrated solver.

\begin{figure}
  \centering
  \includegraphics[scale = .35]{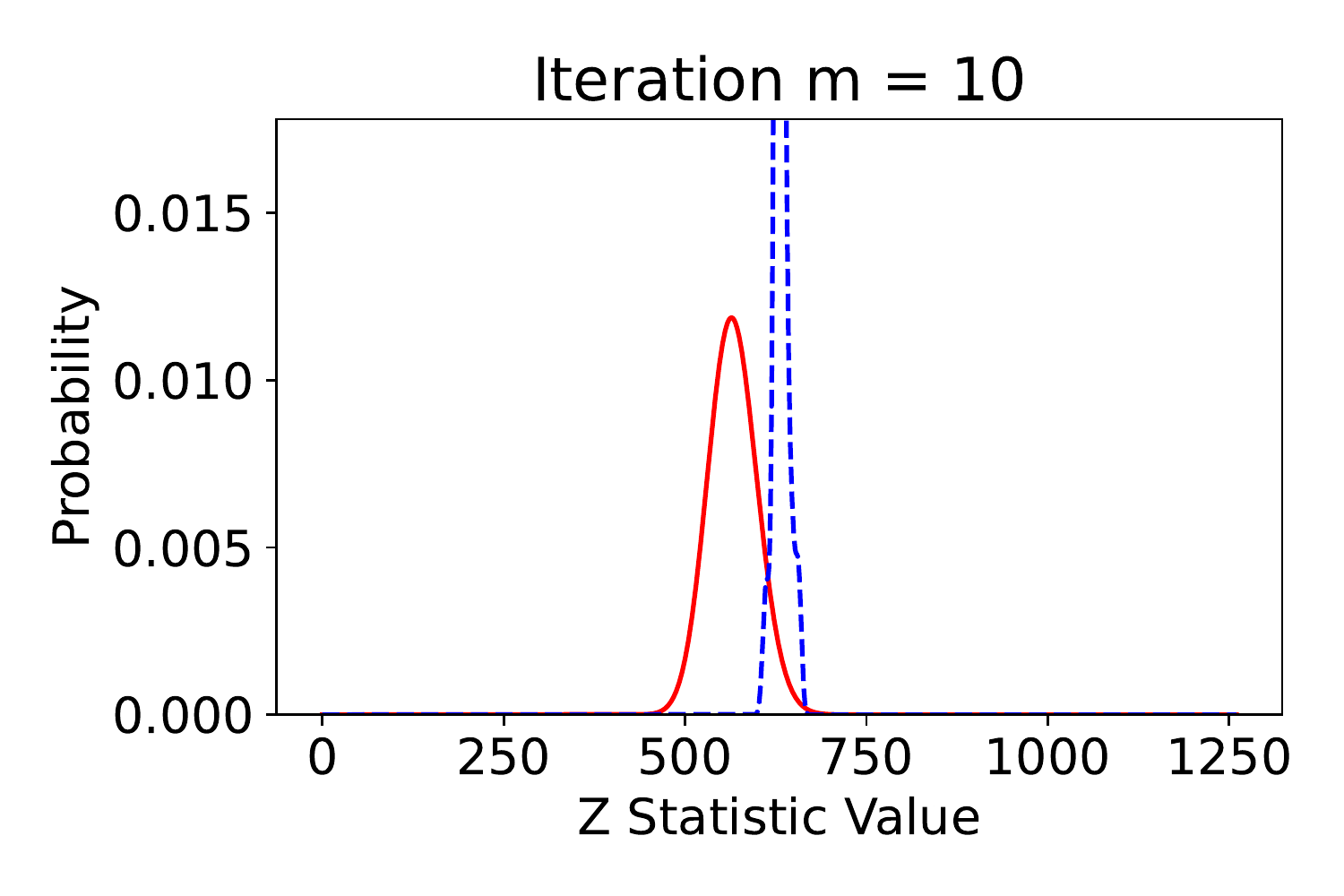}
  \includegraphics[scale = .35]{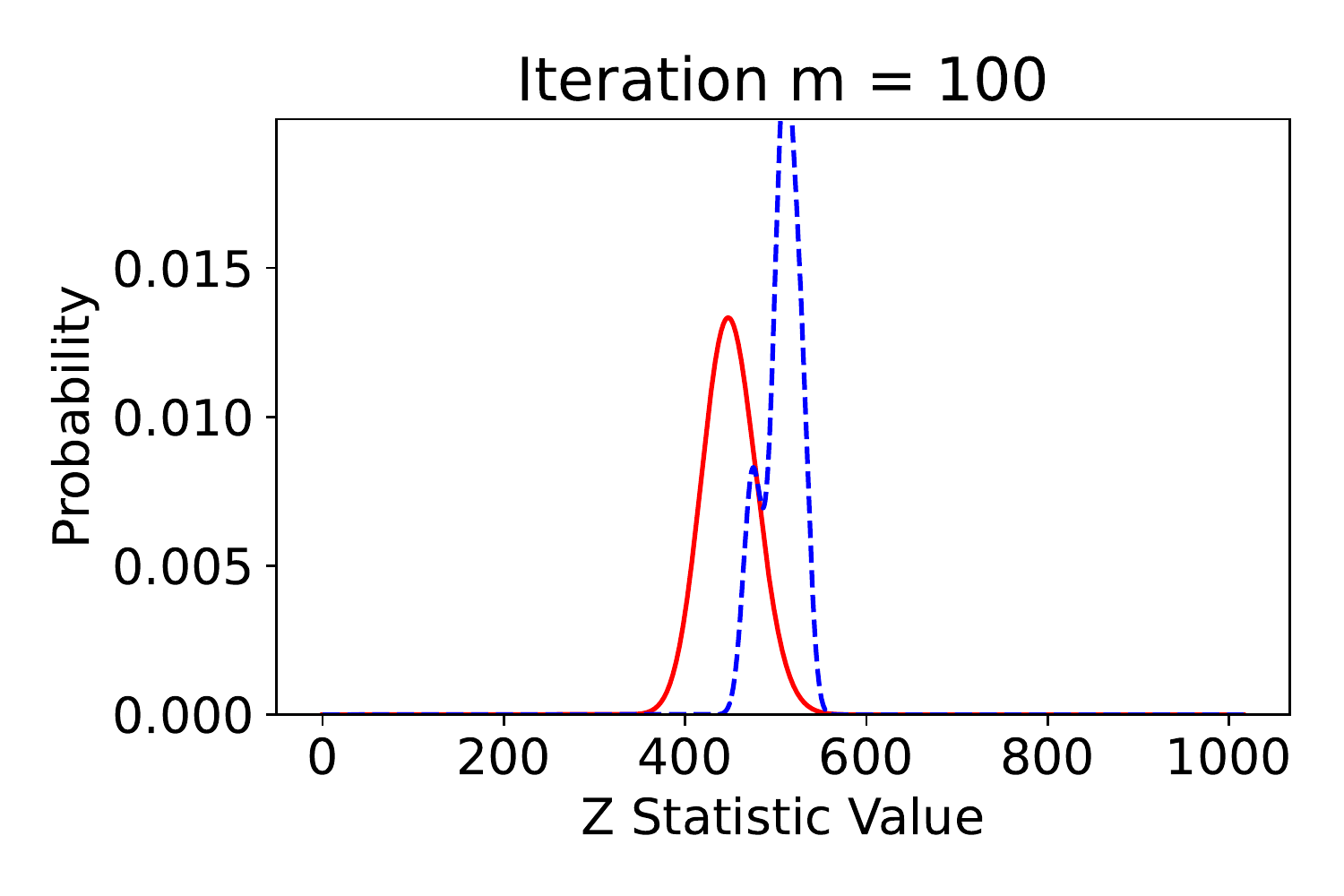} \\
  \includegraphics[scale = .35]{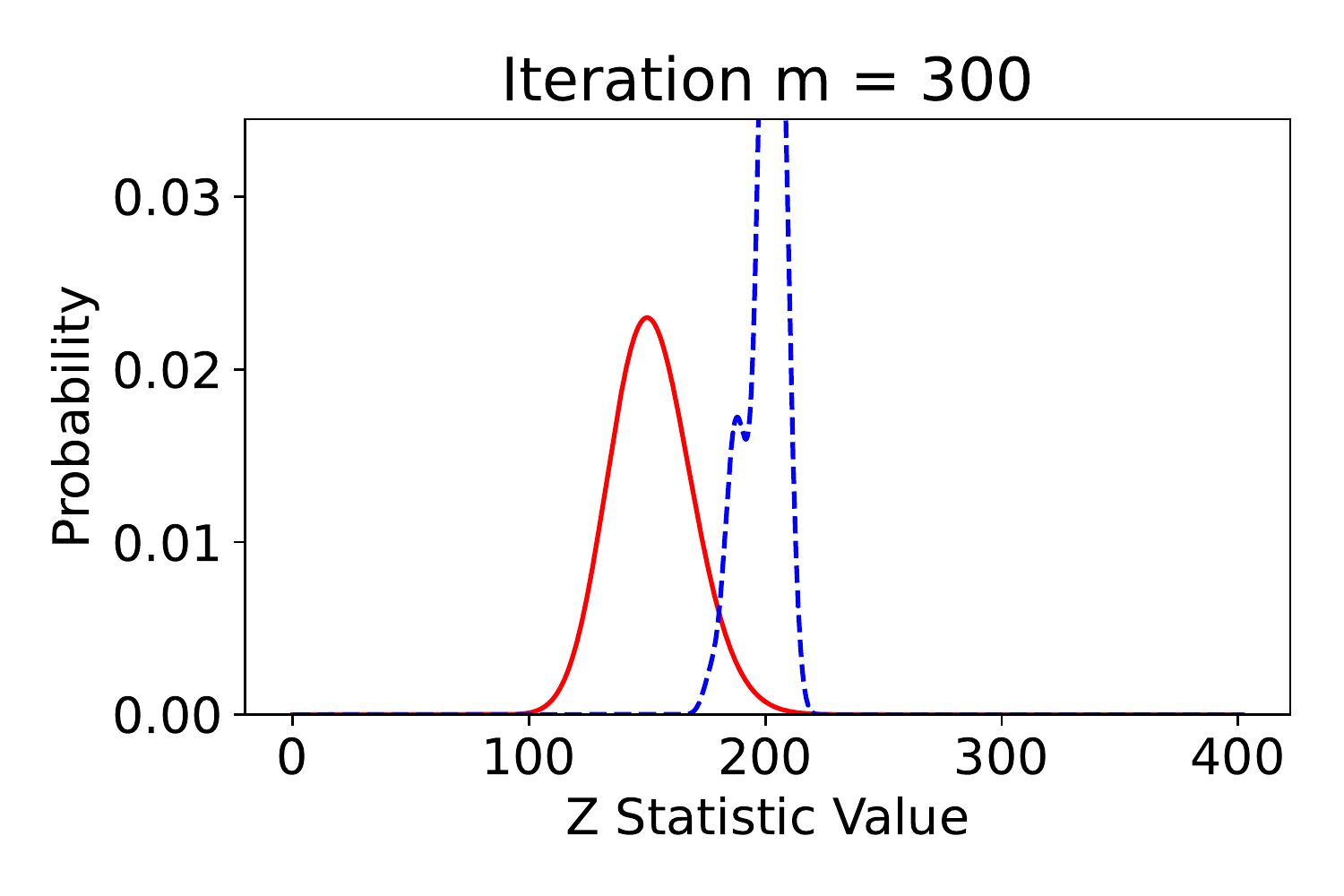}
  \caption{$Z$-statistic samples for BayesCG under the Krylov prior and full  posteriors at $m=10, 100, 300$ iterations. The solid curve represents the predicted chi-squared distribution and the dashed curve the $Z$-statistic samples.}
  \label{F:ZKrylov}
\end{figure}

\begin{table}
    \centering
    \begin{tabular}{c|ccc}
    Iteration & $Z$-stat mean & $\chi^2$ mean & K-S statistic \\\hline
$ 10.0 $ & $ 631.0 $ & $ 566.0 $ & $ 0.902 $ \\
$ 100.0 $ & $ 509.0 $ & $ 450.0 $ & $ 0.752 $ \\
$ 300.0 $ & $ 201.0 $ & $ 152.0 $ & $ 0.941 $ \\
    \end{tabular}
    \caption{This table corresponds to \figref{F:ZKrylov}. For BayesCG under the Krylov prior and full posteriors, it shows the
    $Z$-statistic sample means; the chi-squared distribution means; and the Kolmogorov-Smirnov statistic between the $Z$-statistic samples and the chi-squared distribution.}
    \label{Tab:ZKrylov}
\end{table}

\begin{figure}
  \centering
  \includegraphics[scale = .35]{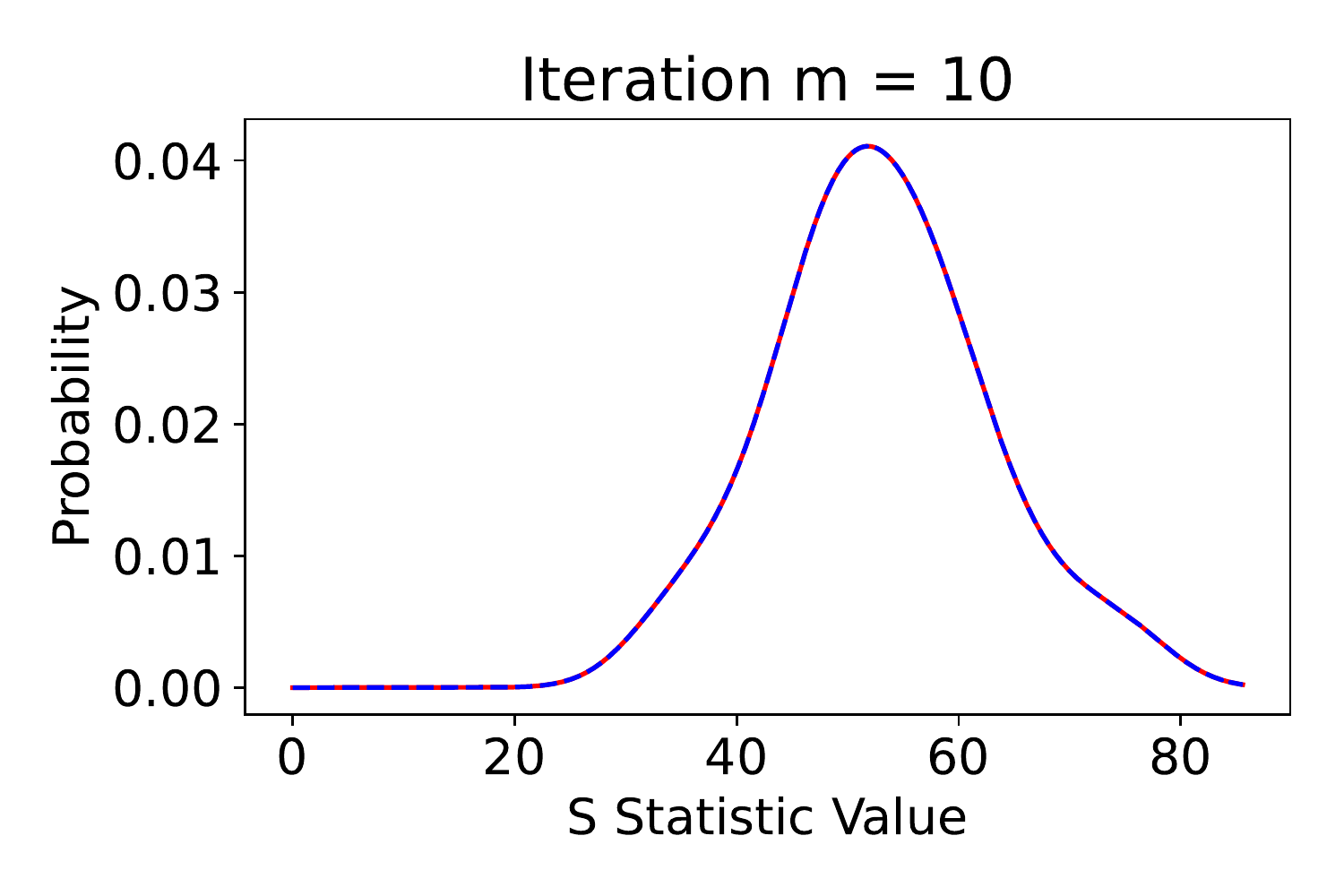} 
  \includegraphics[scale = .35]{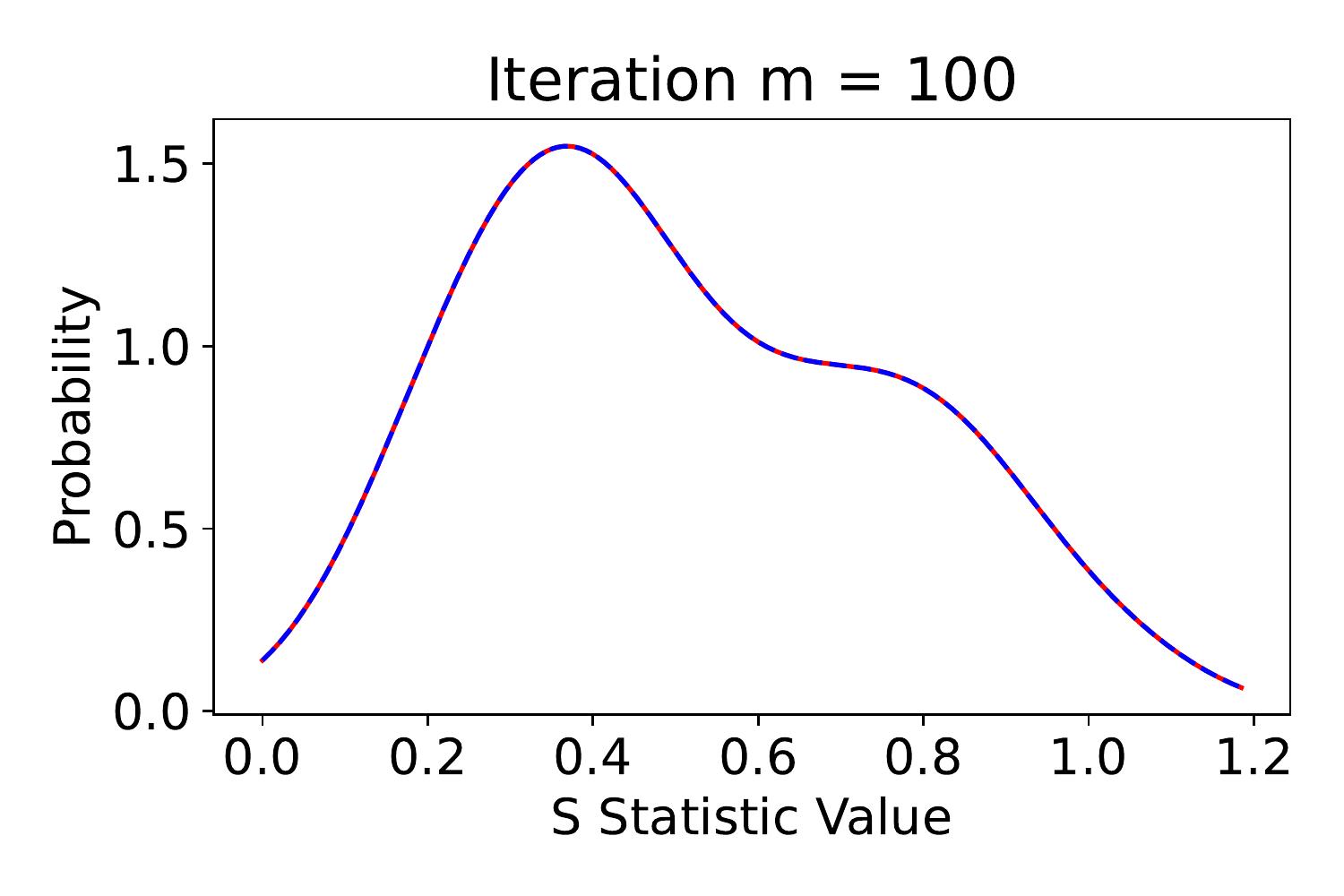} \\
  \includegraphics[scale = .35]{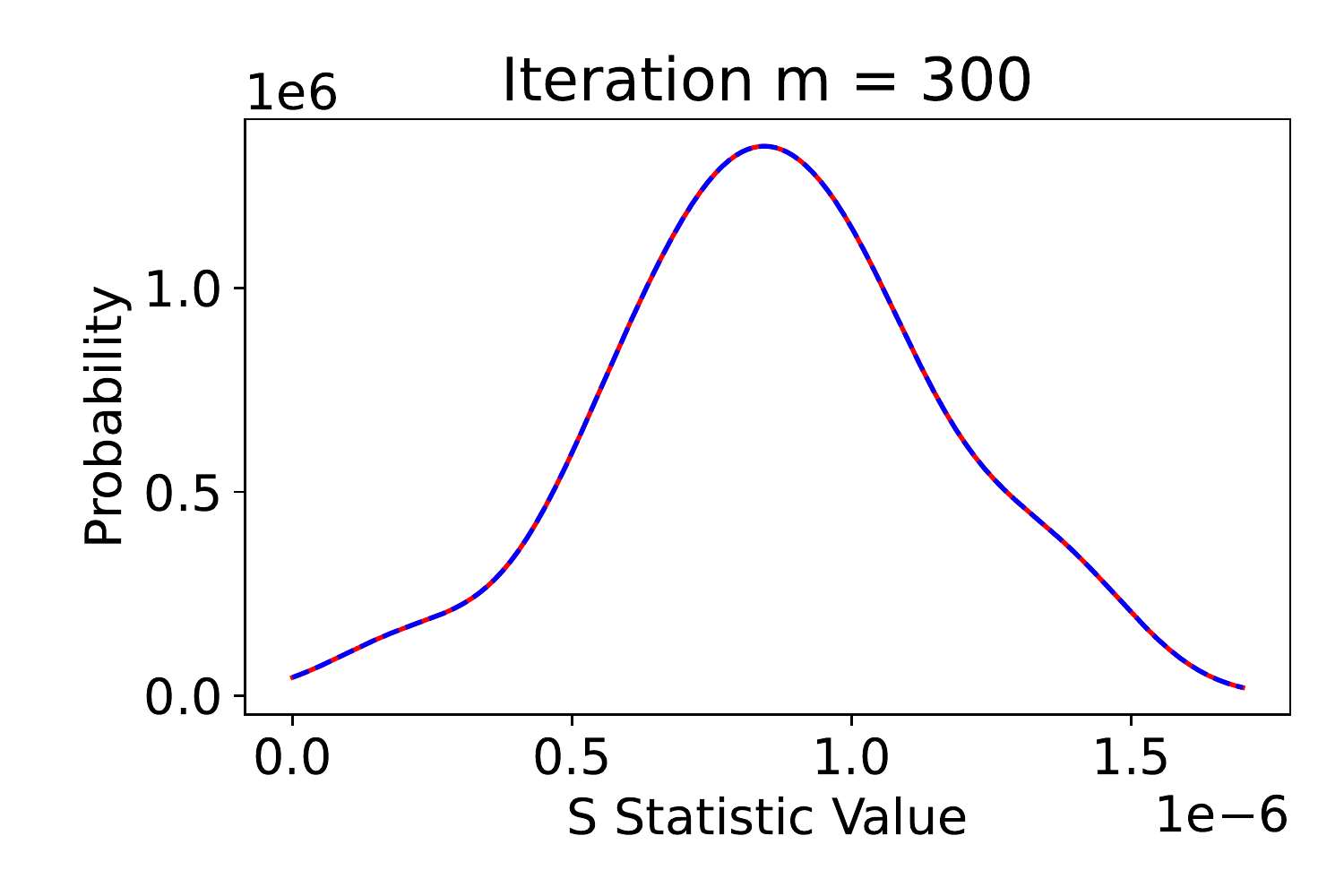} 
  \caption{$S$-statistic samples and traces for BayesCG under the Krylov prior and full posteriors at $m=10, 100, 300$ iterations. The solid curve represents the traces and the dashed curve the  $S$-statistic samples.}
  \label{F:SKrylov}
\end{figure}

\begin{table}
    \centering
    \begin{tabular}{c|ccc}
    Iteration & $S$-stat mean & Trace mean & Trace standard deviation \\\hline
$ 10.0 $ & $ 52.9 $ & $ 52.9 $ & $ 9.4 $ \\
$ 100.0 $ & $ 0.515 $ & $ 0.515 $ & $ 0.241 $ \\
$ 300.0 $ & $ 8.59 \times 10^{-7} $ & $ 8.59 \times 10^{-7} $ & $ 2.84 \times 10^{-7} $ \\
    \end{tabular}
    \caption{This table corresponds to \figref{F:SKrylov}. For BayesCG under the Krylov prior and full posteriors, it shows the
    $S$-statistic sample means, the trace means, and the trace standard deviations.}
    \label{Tab:SKrylov}
\end{table}

\paragraph{\figref{F:ZKrylov} and \tabref{Tab:ZKrylov}.}
The $Z$-statistic samples in
\figref{F:ZKrylov}
are concentrated at somewhat larger values than the predicted chi-squared distribution. The Kolmogorov-Smirnov statistics in \tabref{Tab:ZKrylov} are around .8 and .9, thus close to 1,
and indicate very little overlap between $Z$-statistic samples and chi-squared distribution. Thus,  BayesCG under full Krylov posteriors is somewhat optimistic. 

These numerical results differ from  \tref{T:KrylovZ}, which predicts $Z$-statistic samples equal to $\kry-m$.
A possible reason might be that 
the rank of the Krylov prior computed by \aref{A:ALanczos} is smaller than the exact rank. In exact arithmetic, $\rank(\Gammat_0) = \kry = n = 1806$. However, in finite precision, $\rank(\Gammat_0)$ is determined by the convergence tolerance which is set to $10^{-12}$,
resulting in $\rank(\Gammat_0) < \kry$.

\paragraph{\figref{F:SKrylov} and \tabref{Tab:SKrylov}.}
The  $S$-statistic samples in \figref{F:SKrylov}
match the traces extremely well, with \tabref{Tab:SKrylov} showing an agreement to 3 figures,
as predicted in \sref{S:BayesCGS},
Thus, the area in which the posteriors are concentrated is consistent with the error, as would be expected from a calibrated solver. 

However,  BayesCG under the Krylov prior does not behave exactly like a
calibrated solver, such as BayesCG  with random search directions
in \sref{S:ExpRand}, where all traces are concentrated at the empirical mean of the $S$-statistic samples. Thus, BayesCG under the Krylov prior is not calibrated but has a performance similar to that of a calibrated solver.

\subsubsection{Rank-50 approximate Krylov posteriors}
\label{S:ExpApprox}

\paragraph{Summary of the experiments below.} 
Both, $Z$- and $S$-statistic indicate that BayesCG under rank-50 approximate Krylov posteriors
is somewhat optimistic, and is not as close to a calibrated solver as BayesCG with full Krylov posteriors.
In contrast to the $Z$-statistic, the
respective $S$-statistic samples and traces for BayesCG under full and rank-50 posteriors are close.

\begin{figure}
  \centering
  \includegraphics[scale = .35]{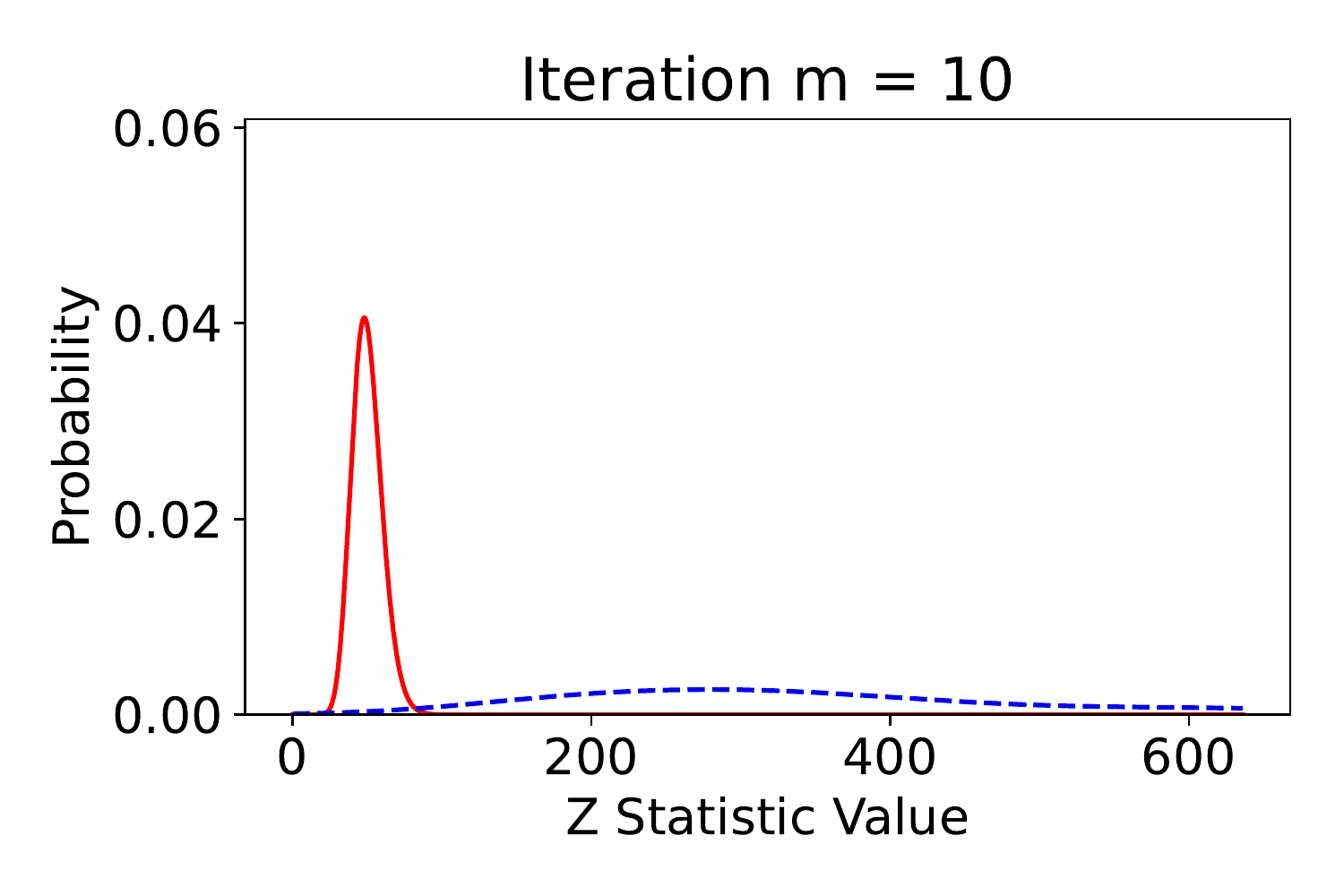}
  \includegraphics[scale = .35]{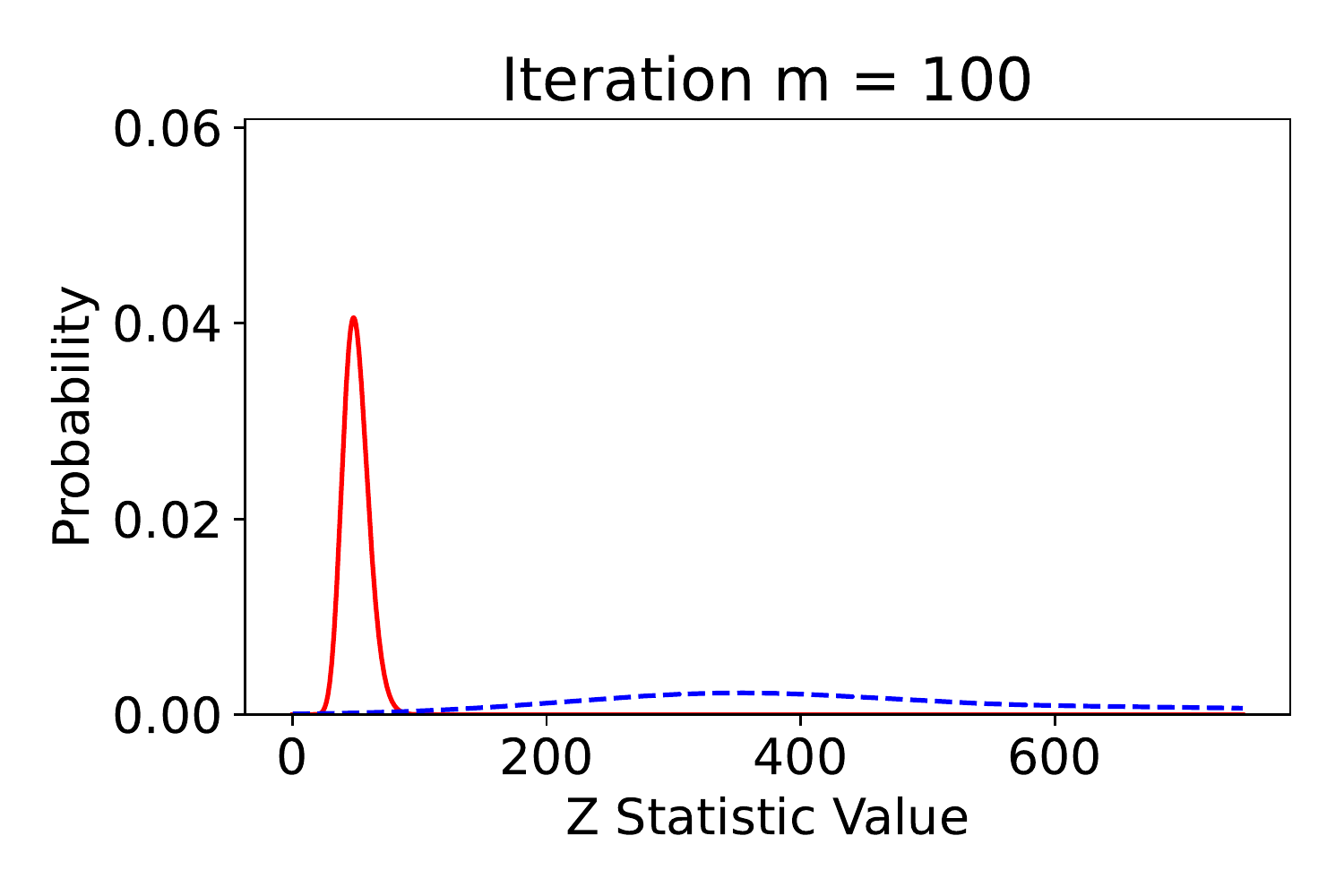} \\
  \includegraphics[scale = .35]{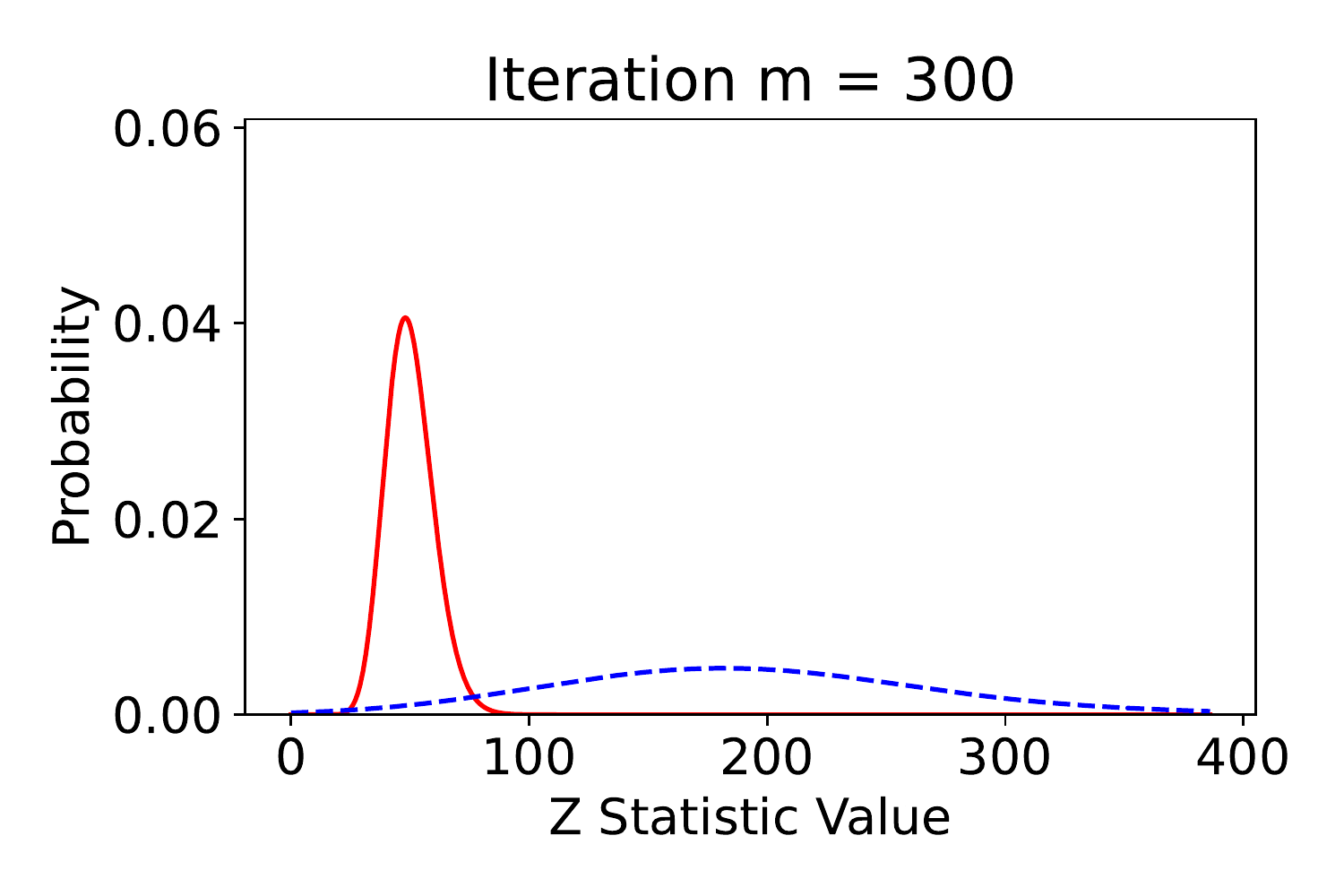}
  \caption{$Z$-statistic samples for BayesCG under rank-50 approximate Krylov posteriors
  at $m=10, 100, 300$ iterations. The solid curve represents the predicted chi-squared distribution and the dashed curve the $Z$-statistic samples.}
  \label{F:ZKrylovApprox}
\end{figure}

\begin{table}
    \centering
    \begin{tabular}{c|ccc}
    Iteration & $Z$-stat mean & $\chi^2$ mean & K-S statistic \\\hline
$ 10.0 $ & $ 319.0 $ & $ 50.0 $ & $ 1.0 $ \\
$ 100.0 $ & $ 375.0 $ & $ 50.0 $ & $ 1.0 $ \\
$ 300.0 $ & $ 194.0 $ & $ 50.0 $ & $ 1.0 $ \\
    \end{tabular}
    \caption{This table corresponds to \figref{F:ZKrylovApprox}. For BayesCG under rank-50 approximate Krylov posteriors, it shows the
    $Z$-statistic sample means; chi-squared distribution means; and Kolmogorov-Smirnov statistic between the $Z$-statistic samples and the chi-squared distribution.}
    \label{Tab:ZKrylovApprox}
\end{table}

\begin{figure}
  \centering
\includegraphics[scale = .35]{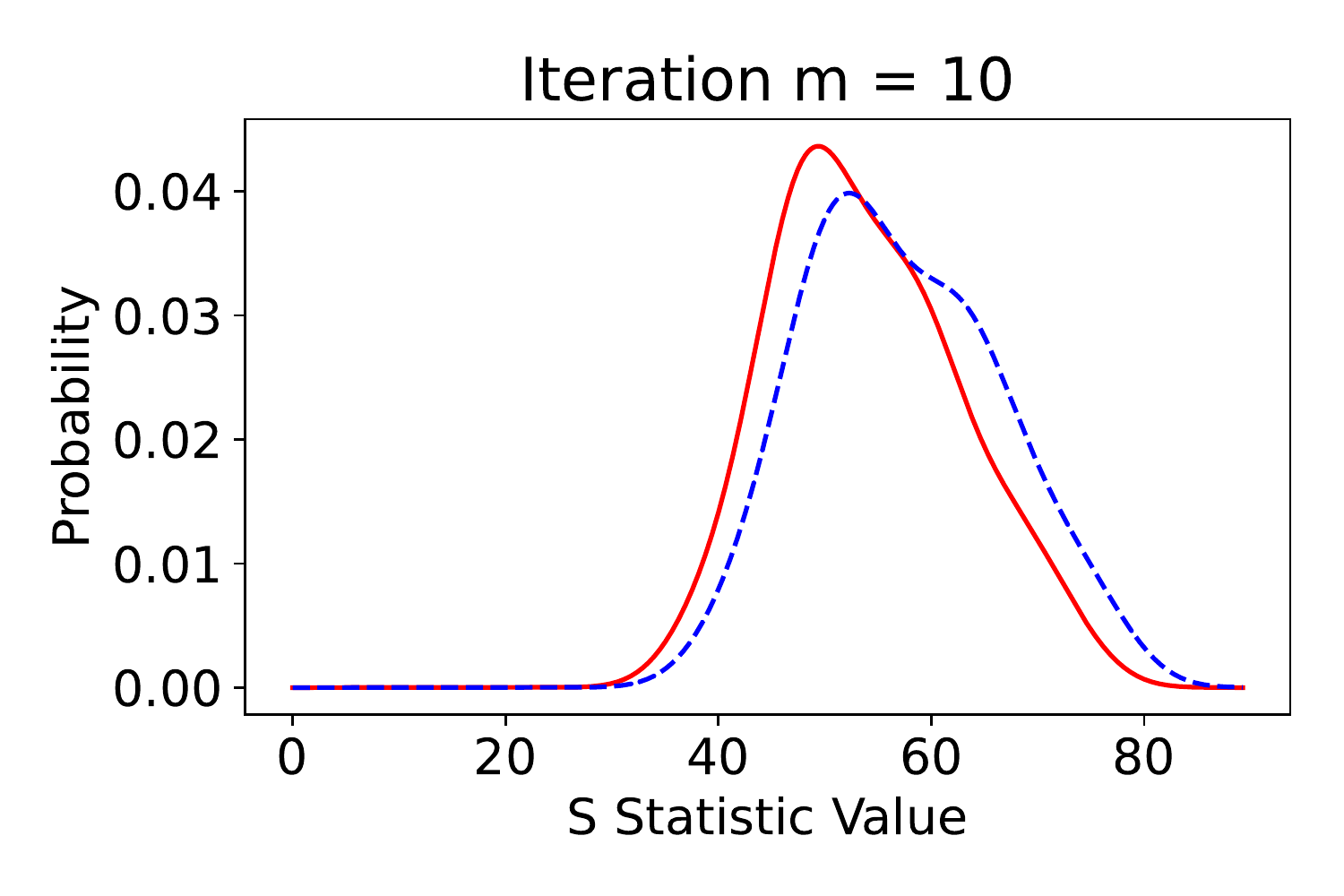} 
\includegraphics[scale = .35]{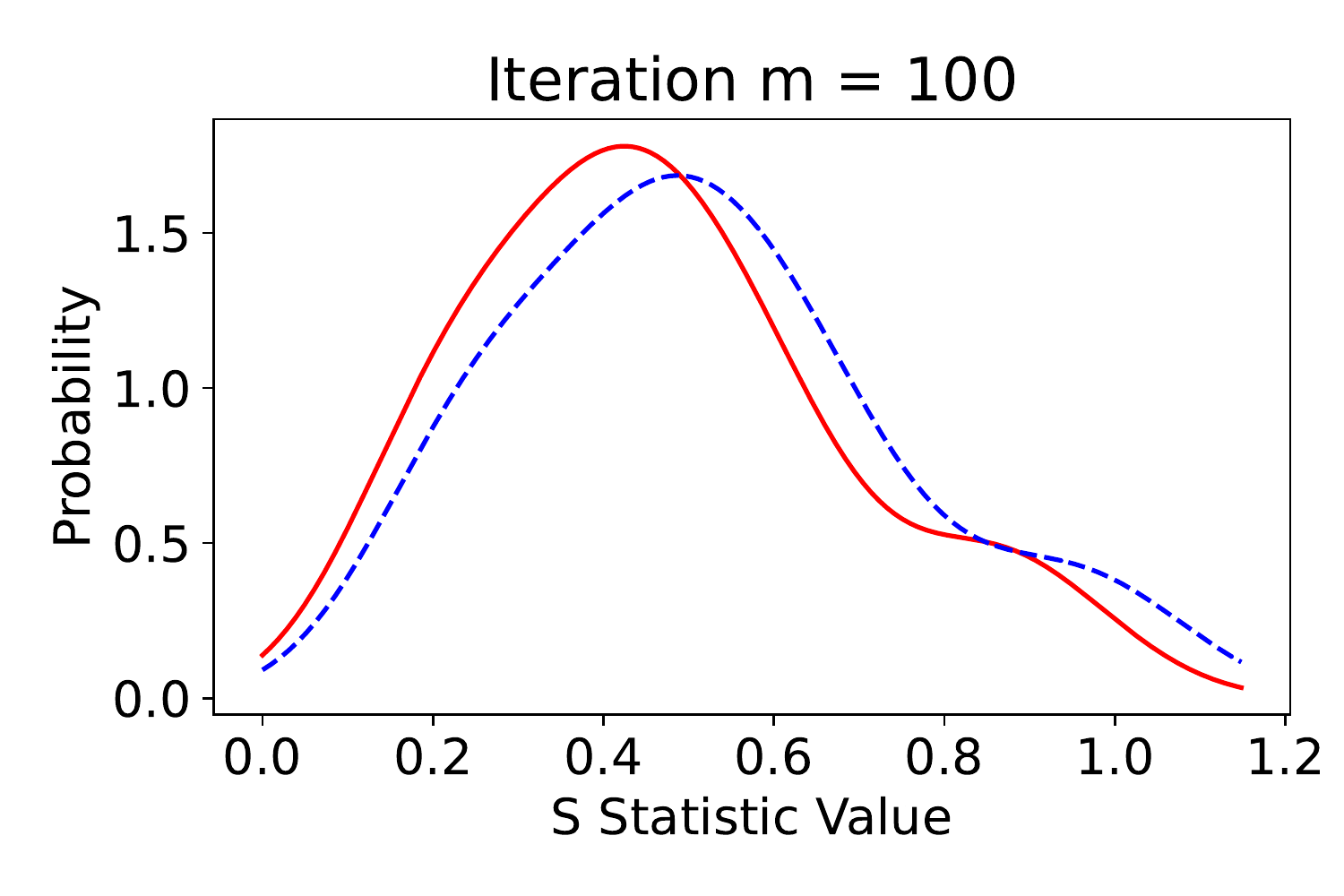} \\
\includegraphics[scale = .35]{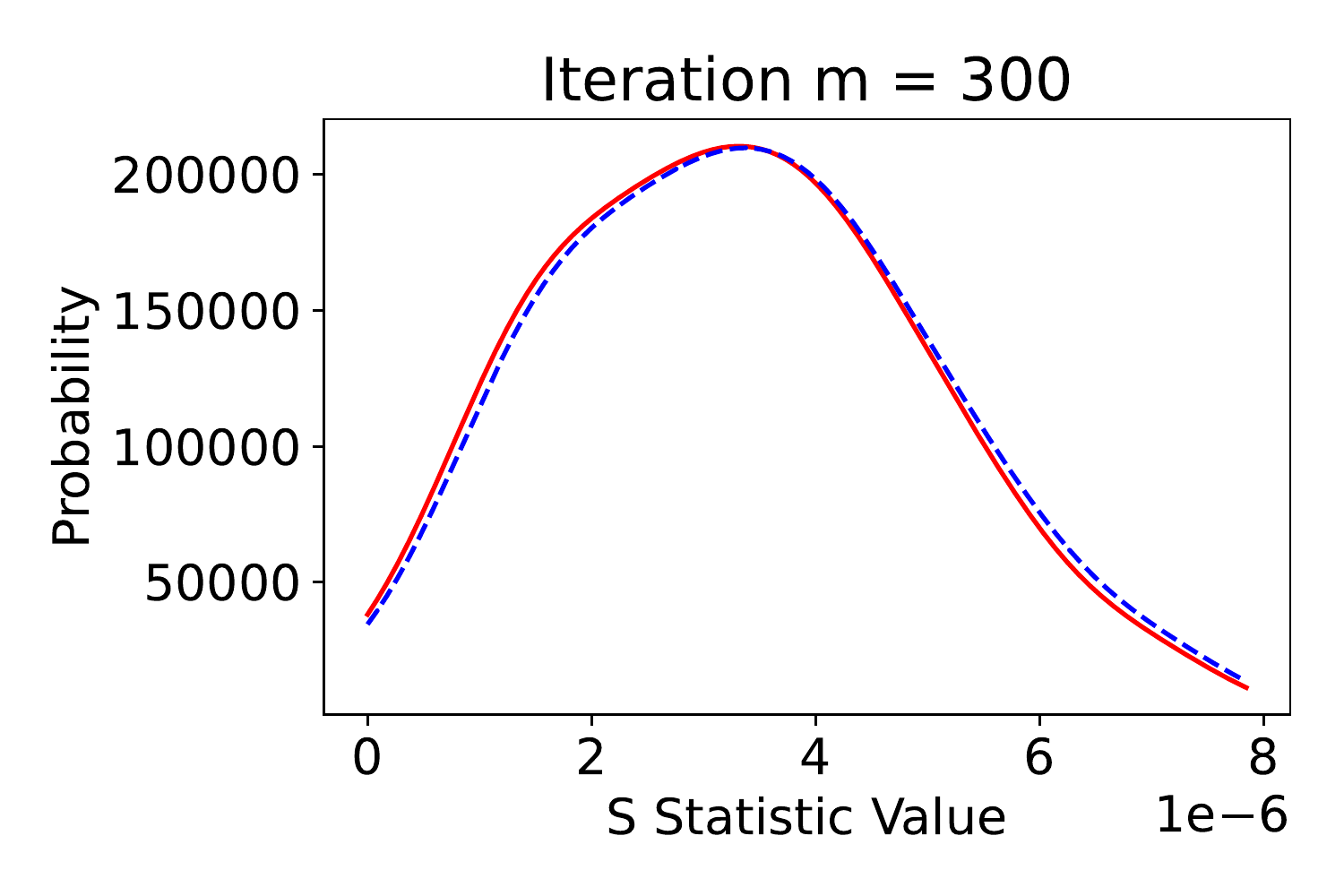}
  \caption{$S$-statistic samples and traces for BayesCG under rank-50 approximate Krylov posteriors at $m=10, 100, 300$ iterations. The solid curve represents the traces and the dashed curve the $S$-statistic samples.}
  \label{F:SKrylovApprox}
\end{figure}

\begin{table}
    \centering
    \begin{tabular}{c|ccc}
    Iteration & $S$-stat mean & Trace mean & Trace standard deviation \\\hline
$ 10.0 $ & $ 57.2 $ & $ 53.9 $ & $ 8.32 $ \\
$ 100.0 $ & $ 0.517 $ & $ 0.467 $ & $ 0.214 $ \\
$ 300.0 $ & $ 3.37 \times 10^{-6} $ & $ 3.29 \times 10^{-6} $ & $ 1.6 \times 10^{-6} $ \\
    \end{tabular}
    \caption{This table corresponds to \figref{F:SKrylovApprox}. For BayesCG under rank-50 approximate Krylov posteriors, it shows
    the $S$-statistic sample means, trace means, and trace standard deviations.}
    \label{Tab:SKrylovApprox}
\end{table}

\paragraph{\figref{F:ZKrylovApprox} and \tabref{Tab:ZKrylovApprox}.}
The $Z$-statistic samples in \figref{F:ZKrylovApprox} are concentrated around larger values than the predicted chi-squared distribution, which is steady at 50. 
All Kolmogorov-Smirnov statistics in \tabref{Tab:ZKrylovApprox} are equal to 1, indicating no overlap between $Z$-statistic samples and chi-squared distribution.
Thus, BayesCG under approximate Krylov posteriors is more optimistic than BayesCG under full posteriors.

\paragraph{\figref{F:SKrylovApprox} and \tabref{Tab:SKrylovApprox}.}
The traces in \figref{F:SKrylovApprox} 
are concentrated around slightly smaller values than the $S$-statistic samples, but
they all have the same order of magnitude, as shown in \tabref{Tab:SKrylovApprox}.
 This means, the errors are slightly larger than the area in which the posteriors are concentrated; and the posteriors slightly underestimate the errors.

A comparison of Tables \ref{Tab:SKrylov}
and~\ref{Tab:SKrylovApprox} shows
that the $S$-statistic samples and traces, respectively, for full and rank-50 posteriors are close. From the point of view of the $S$-statistic, 
BayesCG under approximate Krylov posteriors is somewhat optimistic, and close to being a calibrated solver but not as close as BayesCG under full Krylov posteriors.

\appendix

\section{Auxiliary Results}
\label{S:Aux}
We present auxiliary results required for proofs in other sections.

The stability of Gaussian distributions implies that a linear transformation
of a Gaussian random variable remains Gaussian.

\begin{lemma}[Stability of Gaussian Distributions {\cite[Section 1.2]{Muirhead}}]
  \label{L:GaussStability}
  Let $\Xrv\sim\N(\xvec,\Sigmat)$ be a Gaussian random variable
  with mean $\xvec\in\Rn$ and covariance $\Sigmat\in\Rnn$. If $\yvec\in\Rn$ and $\Fmat\in\Rnn$, then 
  \begin{equation*}
    \Zrv = \yvec+\Fmat\Xrv\sim \N(\yvec+\Fmat\xvec, \Fmat \Sigmat \Fmat^T).
  \end{equation*}
\end{lemma}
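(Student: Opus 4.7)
The plan is to proceed via the characteristic function, which is the cleanest route to proving stability of Gaussian distributions since it handles the possibly singular covariance case without fuss and directly identifies the resulting distribution.

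First, I would recall that a Gaussian random variable $\Xrv\sim\N(\xvec,\Sigmat)$ is uniquely characterized by its characteristic function $\phi_{\Xrv}(\tvec) = \Exp[e^{i\tvec^T\Xrv}] = \exp\!\left(i\tvec^T\xvec - \tfrac{1}{2}\tvec^T\Sigmat\tvec\right)$ for $\tvec\in\Rn$. The main step is then a direct computation of the characteristic function of $\Zrv = \yvec + \Fmat\Xrv$. By pulling the deterministic shift out of the expectation and rewriting $\tvec^T\Fmat\Xrv = (\Fmat^T\tvec)^T\Xrv$, I obtain
\begin{equation*}
\phi_{\Zrv}(\tvec) = e^{i\tvec^T\yvec}\,\phi_{\Xrv}(\Fmat^T\tvec) = \exp\!\left(i\tvec^T(\yvec+\Fmat\xvec) - \tfrac{1}{2}\tvec^T(\Fmat\Sigmat\Fmat^T)\tvec\right).
\end{equation*}
This is exactly the characteristic function of $\N(\yvec+\Fmat\xvec,\Fmat\Sigmat\Fmat^T)$, and by uniqueness of characteristic functions the claim follows.

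As a sanity check I would verify separately that the proposed parameters are the actual mean and covariance of $\Zrv$: linearity of expectation gives $\Exp[\Zrv]=\yvec+\Fmat\xvec$, and
\begin{equation*}
\Cov(\Zrv,\Zrv) = \Exp[(\Zrv-\Exp[\Zrv])(\Zrv-\Exp[\Zrv])^T] = \Fmat\,\Exp[(\Xrv-\xvec)(\Xrv-\xvec)^T]\,\Fmat^T = \Fmat\Sigmat\Fmat^T.
\end{equation*}
Note that $\Fmat\Sigmat\Fmat^T$ is automatically symmetric positive semi-definite whenever $\Sigmat$ is, so it is a valid Gaussian covariance even when $\Fmat$ is singular.

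There is no serious obstacle in this proof; the only subtlety is that one must not rely on $\Sigmat$ being invertible, since both the statement and its later applications (for instance, in Theorem~\ref{T:KrylovEmp} where the Krylov prior covariance may be singular) allow degenerate Gaussians. The characteristic function argument sidesteps density-based manipulations and therefore handles that case uniformly, which is why I would choose it over a change-of-variables computation on densities.
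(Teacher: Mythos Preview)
Your proof via characteristic functions is correct and is the standard textbook argument; it handles the singular-covariance case cleanly, as you note. The paper itself does not prove this lemma at all --- it is stated in Appendix~\ref{S:Aux} as a cited result from \cite[Section 1.2]{Muirhead} with no accompanying proof --- so there is nothing to compare against. Your write-up would serve perfectly well as a self-contained justification if one were desired.
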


The conjugacy of Gaussian distributions implies that the distribution of a Gaussian random variable conditioned on information that linearly depends on the random variable is a Gaussian distribution.

\begin{lemma}[Conjugacy of Gaussian Distributions {\cite[Section 6.1]{Ouellette},
\cite[Corollary 6.21]{Stuart:BayesInverse}}]
  \label{L:GaussConjugacy}
  Let $\Xrv\sim\N(\xvec,\Sigmat_x)$ and $\Yrv\sim\N(\yvec,\Sigmat_y)$. The jointly Gaussian random variable $\begin{bmatrix}\Xrv^T & \Yrv^T\end{bmatrix}^T$  has the distribution
  \begin{equation*}
    \begin{bmatrix}
      \Xrv \\ \Yrv
    \end{bmatrix} \sim \N\left(\begin{bmatrix} \xvec\\ \yvec \end{bmatrix},\begin{bmatrix}\Sigmat_x & \Sigmat_{xy} \\ \Sigmat_{xy}^T & \Sigmat_y\end{bmatrix} \right),
  \end{equation*}
  where $\Sigmat_{xy} \equiv \Cov(\Xrv,\Yrv) = \Exp[(\Xrv-\xvec)(\Yrv-\yvec)^T]$ and the conditional distribution of $\Xrv$ given $\Yrv$ is
  \begin{equation*}
    (\Xrv \ | \ \Yrv) \sim \N(\overbrace{\xvec + \Sigmat_{xy}\Sigmat_y^\dagger(\Yrv - \yvec)}^{\text{mean}}, \ \overbrace{\Sigmat_x - \Sigmat_{xy}\Sigmat_y^\dagger\Sigmat_{xy}^T}^{\text{covariance}}).
  \end{equation*}
\end{lemma}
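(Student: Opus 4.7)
The statement of the joint distribution is essentially a book-keeping observation: once we assume that $\begin{bmatrix}\Xrv^T&\Yrv^T\end{bmatrix}^T$ is jointly Gaussian, its mean is $\begin{bmatrix}\xvec^T&\yvec^T\end{bmatrix}^T$ by linearity of expectation, its diagonal blocks are $\Sigmat_x$ and $\Sigmat_y$ by assumption on the marginals, and its off-diagonal block is $\Cov(\Xrv,\Yrv)=\Sigmat_{xy}$ by definition. So the substantive work is deriving the conditional distribution.

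The plan is to decorrelate $\Xrv$ from $\Yrv$ by a linear shift, then use the fact that for jointly Gaussian variables, uncorrelated means independent. First, I would introduce the auxiliary random variable
\begin{equation*}
\Wrv \equiv \Xrv - \Sigmat_{xy}\Sigmat_y^\dagger(\Yrv-\yvec).
\end{equation*}
Applying Lemma \ref{L:GaussStability} to the linear map acting on $\begin{bmatrix}\Xrv^T&\Yrv^T\end{bmatrix}^T$ shows that $\begin{bmatrix}\Wrv^T&\Yrv^T\end{bmatrix}^T$ is again jointly Gaussian. A direct computation using $\Sigmat_y\Sigmat_y^\dagger\Sigmat_y=\Sigmat_y$ gives
\begin{equation*}
\Cov(\Wrv,\Yrv)=\Sigmat_{xy}-\Sigmat_{xy}\Sigmat_y^\dagger\Sigmat_y=\vzero,
\end{equation*}
where the last equality uses that each column of $\Sigmat_{xy}^T$ lies in $\range(\Sigmat_y)$, a standard consequence of positive semi-definiteness of the joint covariance. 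Since $\Wrv$ and $\Yrv$ are jointly Gaussian and uncorrelated, they are independent.

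Independence implies that, conditional on $\Yrv$, the variable $\Wrv$ retains its unconditional distribution, while $\Sigmat_{xy}\Sigmat_y^\dagger(\Yrv-\yvec)$ becomes the deterministic shift $\Sigmat_{xy}\Sigmat_y^\dagger(\Yrv-\yvec)$. Since $\Xrv=\Wrv+\Sigmat_{xy}\Sigmat_y^\dagger(\Yrv-\yvec)$, the conditional distribution of $\Xrv$ given $\Yrv$ is Gaussian with
\begin{equation*}
\Exp[\Xrv\mid\Yrv]=\Exp[\Wrv]+\Sigmat_{xy}\Sigmat_y^\dagger(\Yrv-\yvec)=\xvec+\Sigmat_{xy}\Sigmat_y^\dagger(\Yrv-\yvec),
\end{equation*}
and covariance $\Cov(\Wrv,\Wrv)$. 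Expanding the latter and again using $\Sigmat_y\Sigmat_y^\dagger\Sigmat_y=\Sigmat_y$ yields the claimed $\Sigmat_x-\Sigmat_{xy}\Sigmat_y^\dagger\Sigmat_{xy}^T$ after the two cross terms and the quadratic term collapse.

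The main obstacle is handling the Moore--Penrose inverse $\Sigmat_y^\dagger$ cleanly when $\Sigmat_y$ is singular. The textbook proof via joint density inversion requires a Schur complement with $\Sigmat_y$ invertible, which fails here. The decorrelation argument above sidesteps the issue, but relies on the range inclusion $\range(\Sigmat_{xy}^T)\subseteq\range(\Sigmat_y)$. I would justify this at the outset from positive semi-definiteness of the joint covariance block matrix, and this justification is what makes every subsequent cancellation of $\Sigmat_y\Sigmat_y^\dagger$ legitimate in the singular case.
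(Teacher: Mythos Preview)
Your argument is correct and follows the standard decorrelation route: define $\Wrv=\Xrv-\Sigmat_{xy}\Sigmat_y^\dagger(\Yrv-\yvec)$, check $\Cov(\Wrv,\Yrv)=\vzero$ via the range inclusion $\range(\Sigmat_{xy}^T)\subseteq\range(\Sigmat_y)$, and read off the conditional law. The handling of the singular case through the Moore--Penrose inverse is the right generalization, and you correctly identify the range inclusion (a consequence of positive semi-definiteness of the joint covariance) as the linchpin that makes $\Sigmat_{xy}\Sigmat_y^\dagger\Sigmat_y=\Sigmat_{xy}$ hold.

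However, there is nothing to compare against: the paper does not supply its own proof of this lemma. It is stated in the appendix as an auxiliary result imported from the literature, with citations to Ouellette and to Stuart's lecture notes, and no proof environment follows it. So your proposal is not an alternative to the paper's argument but rather a self-contained justification of a result the paper simply quotes.
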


We show how to transform a $\Bmat$-orthogonal matrix into an orthogonal
matrix.

\begin{lemma}
  \label{L:BOrth}
  Let $\Bmat\in\Rnn$ be symmetric positive definite, and 
  let $\Hmat\in\Rnn$ be a $\Bmat$-orthogonal matrix
  with $\Hmat^T\Bmat\Hmat = \Hmat\Bmat\Hmat^T = \Imat$. Then
  \begin{equation*}
    \Umat \equiv \Bmat^{1/2}\Hmat
  \end{equation*}
  is an orthogonal matrix with $\Umat^T\Umat= \Umat\Umat^T=\Imat$.
\end{lemma}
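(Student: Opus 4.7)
The plan is to verify the two orthogonality relations $\Umat^T\Umat = \Imat$ and $\Umat\Umat^T = \Imat$ by direct computation, using the symmetry of $\Bmat^{1/2}$ (as a function of the symmetric positive definite matrix $\Bmat$) together with the two hypotheses $\Hmat^T\Bmat\Hmat = \Imat$ and $\Hmat\Bmat\Hmat^T = \Imat$.

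For the first identity, I would write
\begin{equation*}
\Umat^T\Umat = (\Bmat^{1/2}\Hmat)^T(\Bmat^{1/2}\Hmat) = \Hmat^T\Bmat^{1/2}\Bmat^{1/2}\Hmat = \Hmat^T\Bmat\Hmat = \Imat,
\end{equation*}
which uses only the symmetry of $\Bmat^{1/2}$, the identity $\Bmat^{1/2}\Bmat^{1/2} = \Bmat$, and the first assumption on $\Hmat$.

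For the second identity, the cleanest route is to note that $\Hmat^T\Bmat\Hmat = \Imat$ with $\Hmat$ square forces $\Hmat$ to be invertible, so inverting both sides yields $\Hmat^{-1}\Bmat^{-1}(\Hmat^T)^{-1} = \Imat$, i.e.\ $\Hmat\Hmat^T = \Bmat^{-1}$. Then
\begin{equation*}
\Umat\Umat^T = \Bmat^{1/2}\Hmat\Hmat^T\Bmat^{1/2} = \Bmat^{1/2}\Bmat^{-1}\Bmat^{1/2} = \Imat.
\end{equation*}
Alternatively, one can use the second hypothesis $\Hmat\Bmat\Hmat^T = \Imat$ more directly by inserting $\Bmat^{1/2}\Bmat^{-1/2} = \Imat$ between $\Hmat$ and $\Bmat^{1/2}$ on each side; either route works.

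There is no real obstacle here: the result is a two-line computation in each direction. The only subtlety worth flagging is that squareness of $\Hmat$ is needed if one wants to derive $\Hmat\Hmat^T = \Bmat^{-1}$ from the first hypothesis alone; otherwise one simply uses both stated relations in parallel.
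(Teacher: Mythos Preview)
Your proof is correct and essentially matches the paper's. The paper computes $\Umat^T\Umat = \Hmat^T\Bmat\Hmat = \Imat$ exactly as you do, and then, rather than computing $\Umat\Umat^T$ explicitly, simply invokes the standard fact that a square matrix with orthonormal columns is orthogonal; your explicit computation of $\Umat\Umat^T$ via $\Hmat\Hmat^T = \Bmat^{-1}$ amounts to the same thing.
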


\begin{proof}
The symmetry of $\Bmat$ and the $\Bmat$-orthogonality of $\Hmat$ imply
  \begin{equation*}
    \Umat^T\Umat = \Hmat^T\Bmat\Hmat = \Imat.
  \end{equation*}
From the orthonormality of the columns of $\Umat$, and the fact  that $\Umat$ is square follows that $\Umat$ is an orthogonal matrix \cite[Definition 2.1.3]{HornJohnson85}.
  \qed \end{proof}

\begin{definition}{{\cite[Section 7.3]{HornJohnson85}}}
  \label{D:MoorePenrose}
  The \emph{thin singular value decomposition} of the rank-$p$ matrix $\Gmat\in\Real^{m\times n}$ is
  \begin{equation*}
    \Gmat = \Umat\Dmat\Wmat^T,
  \end{equation*}
  where $\Umat\in\Real^{m\times p}$ and $\Wmat\in\Real^{n\times p}$ are matrices with orthonormal columns and $\Dmat\in\Real^{p\times p}$ is a diagonal matrix with positive diagonal elements. The \emph{Moore-Penrose inverse} of $\Gmat$ is
  \begin{equation*}
    \Gmat^\dagger = \Wmat\Dmat^{-1}\Umat^T.
  \end{equation*}
\end{definition}

If a matrix has full column-rank or full row-rank, then its Moore-Penrose can be expressed in terms of the matrix itself. Furthermore, the 
Moore-Penrose
inverse of a product is equal to the product of the Moore-Penrose inverses,
provided
the first matrix has full column-rank and the second matrix has full row-rank.

\begin{lemma}[{\cite[Corollary 1.4.2]{CM09}}]
  \label{L:PseudoProduct}
  Let $\Gmat\in\Real^{m\times n}$ and $\Jmat\in\Real^{n\times p}$ have full column and row rank respectively, so $\rank(\Gmat)=\rank(\Jmat)=n$.
  The Moore-Penrose inverses of $\Gmat$ and $\Jmat$ are
  \begin{equation*}
    \Gmat^\dagger = (\Gmat^T\Gmat)^{-1}\Gmat^T \quad \text{and} \quad \Jmat^\dagger = \Jmat^T(\Jmat\Jmat^T)^{-1}
  \end{equation*}
  respectively, and the Moore-Penrose inverse of the product equals
  \begin{equation*}
    (\Gmat\Jmat)^\dagger = \Jmat^\dagger\Gmat^\dagger.
  \end{equation*}
\end{lemma}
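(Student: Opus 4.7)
The plan is to exploit the uniqueness of the Moore--Penrose inverse: it suffices to verify the four defining conditions
\begin{align*}
\Amat\Amat^\dagger\Amat &= \Amat, & \Amat^\dagger\Amat\Amat^\dagger &= \Amat^\dagger, \\
(\Amat\Amat^\dagger)^T &= \Amat\Amat^\dagger, & (\Amat^\dagger\Amat)^T &= \Amat^\dagger\Amat,
\end{align*}
for each of the three claimed formulas. The full-rank assumptions on $\Gmat$ and $\Jmat$ ensure that the Gram matrices $\Gmat^T\Gmat$ and $\Jmat\Jmat^T$ are invertible, so all expressions below are well defined.

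First I would establish the two single-matrix formulas. For $\Gmat$ with full column rank, substitute $(\Gmat^T\Gmat)^{-1}\Gmat^T$ into the four conditions; the crucial simplification is $((\Gmat^T\Gmat)^{-1}\Gmat^T)\,\Gmat=\Imat_n$, which trivializes conditions (1), (2), and (4), while (3) follows from the symmetry of $(\Gmat^T\Gmat)^{-1}$. Alternatively, using the thin SVD $\Gmat=\Umat\Dmat\Wmat^T$ of Definition~\ref{D:MoorePenrose}, full column rank forces $\Wmat$ to be square and orthogonal, so $(\Gmat^T\Gmat)^{-1}\Gmat^T=\Wmat\Dmat^{-2}\Wmat^T\Wmat\Dmat\Umat^T=\Wmat\Dmat^{-1}\Umat^T=\Gmat^\dagger$. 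The argument for $\Jmat$ is the mirror image: full row rank makes $\Jmat^T\Jmat$ not invertible but $\Jmat\Jmat^T$ invertible, and the analogous verification gives $\Jmat\,\Jmat^T(\Jmat\Jmat^T)^{-1}=\Imat_n$.

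Next I would handle the product formula by plugging $\Kmat\equiv\Jmat^\dagger\Gmat^\dagger=\Jmat^T(\Jmat\Jmat^T)^{-1}(\Gmat^T\Gmat)^{-1}\Gmat^T$ into the Moore--Penrose conditions for $\Gmat\Jmat$. The two identities from the previous step, namely $\Gmat^\dagger\Gmat=\Imat_n$ and $\Jmat\Jmat^\dagger=\Imat_n$, are what make the computation collapse. For condition~(1),
\begin{align*}
(\Gmat\Jmat)\Kmat(\Gmat\Jmat)
=\Gmat(\Jmat\Jmat^\dagger)(\Gmat^\dagger\Gmat)\Jmat
=\Gmat\Jmat,
\end{align*}
and condition~(2) follows by the same cancellations. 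For conditions (3) and (4), the same identities reduce $(\Gmat\Jmat)\Kmat$ to $\Gmat(\Gmat^T\Gmat)^{-1}\Gmat^T$ and $\Kmat(\Gmat\Jmat)$ to $\Jmat^T(\Jmat\Jmat^T)^{-1}\Jmat$, both manifestly symmetric.

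The main obstacle is purely bookkeeping: ensuring that the factors cancel in the right order, and keeping track that $\Gmat\Jmat$ itself has rank $n$ (which is neither full column nor full row rank, so no shortcut via a one-sided formula applies to $(\Gmat\Jmat)^\dagger$ directly). Uniqueness of the Moore--Penrose inverse then closes the argument. \qed
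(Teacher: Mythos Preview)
Your proof is correct and follows the standard route: verify the four Moore--Penrose axioms directly, using the key cancellations $\Gmat^\dagger\Gmat=\Imat_n$ and $\Jmat\Jmat^\dagger=\Imat_n$ that the full-rank hypotheses provide. Note, however, that the paper does not supply its own proof of this lemma; it simply cites \cite[Corollary 1.4.2]{CM09} and states the result as an auxiliary fact in the appendix. So there is no in-paper argument to compare against, and your verification stands on its own.
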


Below is an explicit expression for the mean of a quadratic form of Gaussians.

\begin{lemma}[{\cite[Sections 3.2b.1--3.2b.3]{Mathai}}]
  \label{L:QuadExp}
  Let $\Zrv\sim\N(\xvec_z,\Sigmat_z)$ be a Gaussian
  random variable in $\Rn$, and  $\Bmat\in\Rnn$ be symmetric positive definite. 
  The mean  of $\Zrv^T\Bmat\Zrv$ is
  \begin{align*}
    \Exp[\Zrv^T\Bmat\Zrv] &= \trace(\Bmat\Sigmat_z) + \xvec_z^T\Bmat\xvec_z.
  \end{align*}
\end{lemma}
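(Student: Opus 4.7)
The plan is to decompose $\Zrv$ into its mean plus a centered Gaussian and then expand the quadratic form, handling the cross term by linearity and the purely quadratic term by the familiar ``trace trick.'' No deep machinery is required, since the formula holds for any matrix $\Bmat$ and the symmetric positive definiteness plays no essential role beyond being what is assumed in the statement.

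First I would set $\Wrv \equiv \Zrv - \xvec_z$, so that by Lemma~\ref{L:GaussStability} we have $\Wrv \sim \N(\vzero,\Sigmat_z)$ and in particular $\Exp[\Wrv] = \vzero$ and $\Exp[\Wrv \Wrv^T] = \Sigmat_z$. Expanding the quadratic form gives
\begin{equation*}
\Zrv^T \Bmat \Zrv = \xvec_z^T \Bmat \xvec_z + 2\, \xvec_z^T \Bmat \Wrv + \Wrv^T \Bmat \Wrv.
\end{equation*}
Taking expectations and using linearity, the first term is deterministic, and the cross term satisfies $\Exp[2\, \xvec_z^T \Bmat \Wrv] = 2\, \xvec_z^T \Bmat\, \Exp[\Wrv] = 0$.

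For the remaining term, observe that $\Wrv^T \Bmat \Wrv$ is a scalar and so coincides with its own trace; by the cyclic property of the trace,
\begin{equation*}
\Wrv^T \Bmat \Wrv = \trace(\Wrv^T \Bmat \Wrv) = \trace(\Bmat\, \Wrv \Wrv^T).
\end{equation*}
Swapping expectation and trace (both linear) yields $\Exp[\Wrv^T \Bmat \Wrv] = \trace(\Bmat\, \Exp[\Wrv \Wrv^T]) = \trace(\Bmat \Sigmat_z)$. Adding the three contributions produces $\Exp[\Zrv^T \Bmat \Zrv] = \xvec_z^T \Bmat \xvec_z + \trace(\Bmat \Sigmat_z)$, as desired. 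The only step that asks for a moment's thought is the trace trick; everything else is bookkeeping.
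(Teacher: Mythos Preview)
Your proof is correct and is the standard argument for this classical identity. Note, however, that the paper does not actually supply a proof of this lemma: it is stated in the appendix as an auxiliary result with a citation to \cite[Sections 3.2b.1--3.2b.3]{Mathai}, so there is no paper-side proof to compare against. Your decomposition-plus-trace-trick argument is exactly the standard derivation one would find in a reference such as Mathai, and nothing more is needed.
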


We show that the squared Euclidean norm of a Gaussian random variable with an orthogonal projector as its covariance matrix is distributed according to a chi-squared distribution.

\begin{lemma}
  \label{L:ChiProjector}
  Let $\Zrv\sim\N(\zerovec,\Pmat)$ be a Gaussian random variable in $\Rn$. If the covariance matrix $\Pmat$ is an orthogonal projector, that is, if $\Pmat^2 = \Pmat$ and $\Pmat = \Pmat^T$, then
  \begin{equation*}
  \|\Xrv\|_2^2=  (\Xrv^T\Xrv) \sim \chi_p^2,
  \end{equation*}
  where $p = \rank(\Pmat)$.
\end{lemma}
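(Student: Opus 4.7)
The plan is to diagonalize the covariance $\Pmat$ via its spectral decomposition, use stability of Gaussians to reduce $\Zrv$ to a standard Gaussian on a $p$-dimensional subspace, and then invoke the definition of the chi-squared distribution.

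First I would spectrally decompose the orthogonal projector. Since $\Pmat$ is symmetric with $\Pmat^2=\Pmat$ and $\rank(\Pmat)=p$, its eigenvalues are $p$ ones and $n-p$ zeros, so there exists $\Umat_p\in\Real^{n\times p}$ with orthonormal columns spanning $\range(\Pmat)$ such that
\begin{equation*}
\Pmat = \Umat_p \Umat_p^T,\qquad \Umat_p^T\Umat_p = \Imat_p.
\end{equation*}

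Next I would construct an equidistributed representation of $\Zrv$. Let $\Yrv\sim\N(\zerovec,\Imat_p)$ and set $\Wrv\equiv\Umat_p\Yrv$. Lemma \ref{L:GaussStability} gives
\begin{equation*}
\Wrv\sim\N(\zerovec,\Umat_p\Imat_p\Umat_p^T)=\N(\zerovec,\Pmat),
\end{equation*}
so $\Wrv$ and $\Zrv$ have the same distribution. Because the map $\vvec\mapsto\vvec^T\vvec$ is measurable, $\Zrv^T\Zrv$ and $\Wrv^T\Wrv$ are identically distributed.

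Finally I would compute the distribution of $\Wrv^T\Wrv$ directly. Orthonormality of the columns of $\Umat_p$ yields
\begin{equation*}
\Wrv^T\Wrv = \Yrv^T\Umat_p^T\Umat_p\Yrv = \Yrv^T\Yrv,
\end{equation*}
and Definition \ref{D:ChiSquare} applied to $\Yrv\sim\N(\zerovec,\Imat_p)$ gives $\Yrv^T\Yrv\sim\chi_p^2$, completing the argument. The main obstacle is really only a bookkeeping one: one must carefully distinguish between the almost-sure identity $\Zrv=\Umat_p\Umat_p^T\Zrv$ (which holds because $\Zrv\in\range(\Pmat)$ almost surely) and the distributional identity used above; either route works, but the stability-based construction of $\Wrv$ avoids having to justify that $\Umat_p^T\Zrv$ itself is standard Gaussian on $\Real^p$ (which would again follow from Lemma \ref{L:GaussStability} since $\Umat_p^T\Pmat\Umat_p=\Imat_p$).\qed
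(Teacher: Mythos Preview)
Your proof is correct and uses essentially the same ingredients as the paper: the spectral factorization $\Pmat=\Umat_p\Umat_p^T$, the stability lemma (Lemma~\ref{L:GaussStability}), and Definition~\ref{D:ChiSquare}. The only cosmetic difference is the direction in which stability is applied: the paper extends $\Umat_p$ to a full orthogonal matrix $\Umat=[\Umat_1\ \Umat_2]$, writes $\Xrv^T\Xrv=\Xrv^T\Umat_1\Umat_1^T\Xrv+\Xrv^T\Umat_2\Umat_2^T\Xrv$, and uses stability to show $\Umat_1^T\Xrv\sim\N(\zerovec,\Imat_p)$ while $\Umat_2^T\Xrv=\zerovec$; you instead build an equidistributed $\Wrv=\Umat_p\Yrv$ from a standard Gaussian $\Yrv$ and compute $\Wrv^T\Wrv$ directly. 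Your route is marginally cleaner in that it avoids introducing the complement $\Umat_2$, at the small cost of the distributional-equality step; the paper's route is exactly the alternative you mention in your final parenthetical remark.
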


\begin{proof}
We express the projector in terms of orthonormal matrices and then use the invariance of the 2-norm under orthogonal matrices and the stability of Gaussians.
  
  Since $\Pmat$ is an orthogonal projector, there exists $\Umat_1\in\Real^{n\times p}$ such that $\Umat_1\Umat_1^T = \Pmat$ and $\Umat_1^T\Umat = \Imat_p$.
Choose $\Umat_2\in\Real^{n\times (n-p)}$ so that $\Umat = \begin{bmatrix} \Umat_1 & \Umat_2 \end{bmatrix}$ is an orthogonal matrix. Thus,
  \begin{equation}
    \label{Eq:ChiProjector}
    \Xrv^T\Xrv = \Xrv^T\Umat\Umat^T\Xrv = \Xrv^T\Umat_1\Umat_1^T\Xrv + \Xrv^T\Umat_2\Umat_2^T\Xrv.
  \end{equation}
 Lemma~\ref{L:GaussStability} implies that $\Yrv = \Umat_1^T\Xrv$ is distributed according to a Gaussian distribution with mean $\zerovec$ and covariance $\Umat_1^T\Umat_1\Umat_1^T\Umat = \Imat_p$. Similarly, $\Zrv = \Umat_2^T\Xrv$ is distributed according to a Gaussian distribution with mean $\zerovec$ and covariance $\Umat_2^T\Umat_1\Umat_1^T\Umat_2 = \zerovec$, thus $\Zrv = \zerovec$.

  Substituting $\Yrv$ and $\Zrv$ into \eref{Eq:ChiProjector} gives
$\Xrv^T\Xrv = \Yrv^T\Yrv + \zerovec^T\zerovec$. 
From $\Yrv\sim\N(\zerovec,\Imat_p)$ follows $(\Xrv^T\Xrv)\sim\chi_p^2$.
  \qed \end{proof}

\begin{lemma}
\label{L:ExpS}
If $\Amat\in\Rnn$ is symmetric positive definite, and
 $\Mrv\sim\N(\xvec_\mu\Sigmat_\mu)$ and $\Nrv\sim\N(\xvec_\nu,\Sigmat_\nu)$ are independent random variables in $\Rn$, then
\begin{equation*}
    \Exp[\|\Mrv-\Nrv\|_\Amat^2] = \|\xvec_\mu-\xvec_\nu\|_\Amat^2 + \trace(\Amat\Sigmat_\mu) + \trace(\Amat\Sigmat_\nu).
\end{equation*}
\end{lemma}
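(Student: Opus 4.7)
The plan is to reduce the left-hand side to a single Gaussian quadratic form and then apply Lemma \ref{L:QuadExp}. First I would introduce the difference $\Zrv \equiv \Mrv - \Nrv$. Under the assumed independence, the joint random variable $\begin{bmatrix} \Mrv^T & \Nrv^T \end{bmatrix}^T$ is Gaussian with mean $\begin{bmatrix} \xvec_\mu^T & \xvec_\nu^T \end{bmatrix}^T$ and block-diagonal covariance $\diag(\Sigmat_\mu, \Sigmat_\nu)$, so applying Lemma \ref{L:GaussStability} with the linear map $\begin{bmatrix} \Imat & -\Imat \end{bmatrix}$ yields $\Zrv \sim \N(\xvec_\mu - \xvec_\nu,\; \Sigmat_\mu + \Sigmat_\nu)$. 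By the definition of the weighted norm in (\ref{Eq:BNorm}), we have $\|\Mrv - \Nrv\|_\Amat^2 = \Zrv^T \Amat \Zrv$, which reduces the problem to computing the expectation of a quadratic form in a single Gaussian.

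Next I would apply Lemma \ref{L:QuadExp} with $\Bmat = \Amat$ and covariance $\Sigmat_\mu + \Sigmat_\nu$, obtaining
\begin{equation*}
\Exp[\Zrv^T \Amat \Zrv] = \trace\bigl(\Amat(\Sigmat_\mu + \Sigmat_\nu)\bigr) + (\xvec_\mu - \xvec_\nu)^T \Amat (\xvec_\mu - \xvec_\nu).
\end{equation*}
Linearity of the trace splits the first term into $\trace(\Amat \Sigmat_\mu) + \trace(\Amat \Sigmat_\nu)$, and the second term is $\|\xvec_\mu - \xvec_\nu\|_\Amat^2$ again by (\ref{Eq:BNorm}). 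Combining these gives the claimed identity.

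No real obstacle is anticipated. The only point requiring care is that Lemma \ref{L:GaussStability} is stated for an affine transformation of a single Gaussian rather than a linear combination of two independent Gaussians, and the two-step workaround above (pass to the joint variable, then apply the linear map) handles this cleanly. Everything else is a direct citation of results already established in Appendix \ref{S:Aux}.
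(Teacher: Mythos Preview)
Your proposal is correct and follows essentially the same route as the paper: both establish that $\Mrv-\Nrv$ has mean $\xvec_\mu-\xvec_\nu$ and covariance $\Sigmat_\mu+\Sigmat_\nu$, then invoke Lemma~\ref{L:QuadExp}. The only cosmetic difference is that the paper obtains the covariance via bilinearity of $\Cov(\cdot,\cdot)$ and independence, whereas you pass through the joint distribution and Lemma~\ref{L:GaussStability}; note that the latter is stated for square $\Fmat$, so strictly speaking you are using a trivial rectangular extension.
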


\begin{proof}
The random variable $\Mrv-\Nrv$ has mean
$\Exp[\Mrv-\Nrv] = \xvec_\mu-\xvec_\nu$,
and covariance 
\begin{align*}
  \Sigmat_{\Mrv-\Nrv}
 & \equiv\Cov(\Mrv-\Nrv,\Mrv-\Nrv) \notag\\
&= \Cov(\Mrv,\Mrv) + \Cov(\Nrv,\Nrv) -\Cov(\Mrv,\Nrv) - \Cov(\Nrv,\Mrv)\notag\\
&= \Cov(\Mrv,\Mrv) + \Cov(\Nrv,\Nrv) =
\Sigmat_{\mu}+\Sigmat_{\nu},\label{Eq:SCov}
\end{align*}
where the covariances $\Cov(\Mrv,\Nrv) =\Cov(\Nrv,\Mrv) = 0$ because $\Mrv$ and $\Nrv$ are independent. Now apply \lref{L:QuadExp} to $M-N$.
\qed
\end{proof}

\section{Algorithms}
\label{S:Algs}
We present algorithms for the modified Lanczos method (Section \ref{S:ALanczos}), BayesCG with random search directions (Section \ref{S:BayesCGC}), 
BayesCG with covariances in factored form
(Section \ref{S:BayesCGF}), and BayesCG under the Krylov prior
(Section~\ref{S:BayesCGK}).

\subsection{Modified Lanczos method}
\label{S:ALanczos}

The Lanczos method 
\cite[Algorithm 6.15]{Saad}
produces an orthonormal basis for the Krylov space $\Kcal_\kry(\Amat,\vvec_1)$, while  the modified version in \aref{A:ALanczos}  produces an $\Amat$-orthonormal basis.

\begin{algorithm}
\caption{Modified Lanczos Method}
\label{A:ALanczos}
\begin{algorithmic}[1]
  \State \textbf{Input:} spd $\Amat\in\Rnn$, $\vvec_1\in\Rn$, basis dimension $m$, convergence tolerance $\varepsilon$
  \State $\vvec_0 = \zerovec \in \Rn$
  \State $i = 1$
  \State $\beta = (\vvec_i^T\Amat\vvec_i)^{1/2}$
  \State $\vvec_{i} = \vvec_i / \beta$
  \While{ $i \leq m$}
  \State $\wvec = \Amat\vvec_i-\beta\vvec_{i-1}$
  \State $\alpha = \wvec^T\Amat\vvec_i$
  \State $\wvec = w - \alpha\vvec_i$
  \State $\wvec = \wvec - \sum_{j=1}^i \vvec_j\vvec_j^T\Amat\wvec$ \Comment{Reorthogonalize $\wvec$} \label{A:ALanczos:Reorth1}
  \State $\wvec = \wvec - \sum_{j=1}^i \vvec_j\vvec_j^T\Amat\wvec$ \label{A:ALanczos:Reorth2}
  \State $\beta = (\wvec^T\Amat\wvec)^{1/2}$
  \If{ $\beta < \varepsilon$}
  \State Exit while loop
  \EndIf
  \State $i = i+1$
  \State $\vvec_i = \wvec / \beta$
  \EndWhile
  \State $m = i-1$ \Comment{Number of basis vectors}
\State \textbf{Output:} $\{\vvec_1,\vvec_2,\ldots,\vvec_m\}$\Comment{$\Amat$-orthonormal basis of $\Kcal_m(\Amat,\vvec_1)$}
\end{algorithmic}
\end{algorithm}

\aref{A:ALanczos} reorthogonalizes the basis vectors $\vvec_i$ with Classical Gram-Schmidt performed twice, see Lines~\ref{A:ALanczos:Reorth1} and~\ref{A:ALanczos:Reorth2}. This reorthogonalization technique can be implemented efficiently and produces vectors that are orthogonal to machine precision \cite{Giraud:CGS2Exp,Giraud:CGS2Theory}.

\subsection{BayesCG with random search directions}
\label{S:BayesCGC}

The version of BayesCG 
in \aref{A:BayesCGC}
is designed to be calibrated because the  search directions do not depend on $\xvec_*$, hence the posteriors do not depend on $\xvec_*$ either \cite[Section 1.1]{CIOR20}.

After  sampling an initial random search direction $\svec_1\sim\N(\zerovec,\Imat)$,
\aref{A:BayesCGC}
computes an $\Amat\Sigmat_0\Amat$-orthonormal basis for the Krylov space $\Kcal_m(\Amat\Sigmat_0\Amat,\svec_1)$ with  \aref{A:ALanczos}. Then \aref{A:BayesCGC} computes the BayesCG posteriors directly with \eref{Eq:XmTheory} and~\eref{Eq:SigmTheory} from \tref{T:BayesCG}. 
The numerical experiments in \sref{S:Experiments} run \aref{A:BayesCGC} with the inverse prior $\mu_0 = \N(\zerovec,\Amat^{-1})$.

\begin{algorithm}
\caption{BayesCG with random search directions}
\label{A:BayesCGC}
\begin{algorithmic}[1]
  \State{\textbf{Inputs}: spd $\Amat\in\Rnn$, $\bvec\in\Rn$, prior $\mu_0 = \N(\xvec_0,\Sigmat_0)$, iteration count $m$}
  \State{$  {\rvec}_0 =  {\bvec}- {\Amat\xvec}_0$} \Comment{Initial residual}
  \State{Sample $\svec_1$ from $\N(\zerovec,\Imat)$} \Comment{Initial search direction}
  \State Compute columns of $\Smat$ with \aref{A:ALanczos}
  \State $\Lammat_m = \Smat_m^T\Amat\Sigmat_0\Amat\Smat_m$ \Comment{$\Lammat_m$ is diagonal}
  \State $\xvec_m = \xvec_0 + \Sigmat_0\Amat\Smat_m\Lammat_m^{-1}\Smat_m^T\rvec_0$ \Comment{Compute posterior mean with \eref{Eq:XmTheory}}
  \State $\Sigmat_m = \Sigmat_0 - \Sigmat_0\Amat\Smat_m\Lammat_m^{-1}\Smat_m^T\Amat\Sigmat_0$ \Comment{Compute posterior covariance with \eref{Eq:SigmTheory}}
  \State \textbf{Output:} {$\mu_m=\mathcal{N}(\xvec_m, \Sigmat_m)$}
\end{algorithmic}
\end{algorithm}

\subsection{BayesCG with covariances in factored form}
\label{S:BayesCGF}

Algorithm~\ref{A:BayesCGF} takes as input a general
prior covariance $\Sigmat_0$ in factored form, and subsequently maintains the posterior
covariances $\Sigmat_m$ in factored form as well.
Theorem~\ref{T:PosteriorFactor}
presents the correctness proof for
Algorithm~\ref{A:BayesCGF}.

\begin{theorem}
  \label{T:PosteriorFactor}
Under the conditions of Theorem~\ref{T:BayesCG},
if $\Sigmat_0 = \Fmat_0\Fmat_0^T$ for 
  $\Fmat_0\in\Real^{n\times \ell}$ and some $m \leq \ell\leq n$, then
  $\Sigmat_m = \Fmat_m\Fmat_m^T$ with
  \begin{equation*}
    \Fmat_m = \Fmat_0\left(\Imat - \Fmat_0^T\Amat\Smat_m(\Smat_m^T\Amat\Fmat_0\Fmat_0^T\Amat\Smat_m)^{-1}\Smat_m\Amat\Fmat_0\right)\in\Real^{n\times \ell},\qquad 1\leq m\leq n.
  \end{equation*}
\end{theorem}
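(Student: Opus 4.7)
The plan is to substitute the factorization $\Sigmat_0=\Fmat_0\Fmat_0^T$ into the expression for $\Sigmat_m$ in Theorem~\ref{T:BayesCG}, and recognize that the bracketed term in $\Fmat_m$ is an orthogonal projector, so that $\Fmat_m\Fmat_m^T$ collapses neatly to the right-hand side of \eref{Eq:SigmTheory}.

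First, I would abbreviate $\Qmat \equiv \Fmat_0^T\Amat\Smat_m\in\Real^{\ell\times m}$, so that
\begin{equation*}
\Lammat_m=\Smat_m^T\Amat\Fmat_0\Fmat_0^T\Amat\Smat_m=\Qmat^T\Qmat,
\end{equation*}
and the nonsingularity of $\Lammat_m$ (assumed in Theorem~\ref{T:BayesCG}) ensures that $\Qmat$ has full column rank, so $\Qmat^T\Qmat$ is indeed invertible. In this notation,
\begin{equation*}
\Fmat_m=\Fmat_0\,\Pmat, \qquad \Pmat \equiv \Imat_\ell - \Qmat(\Qmat^T\Qmat)^{-1}\Qmat^T.
\end{equation*}

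The key step is to observe that $\Pmat$ is the orthogonal projector onto $\ker(\Qmat^T)=\range(\Qmat)^\perp$, so $\Pmat=\Pmat^T$ and $\Pmat^2=\Pmat$. Hence
\begin{align*}
\Fmat_m\Fmat_m^T &= \Fmat_0\Pmat\Pmat^T\Fmat_0^T = \Fmat_0\Pmat\Fmat_0^T \\
&= \Fmat_0\Fmat_0^T - \Fmat_0\Qmat(\Qmat^T\Qmat)^{-1}\Qmat^T\Fmat_0^T \\
&= \Sigmat_0 - \Sigmat_0\Amat\Smat_m\,\Lammat_m^{-1}\,\Smat_m^T\Amat\Sigmat_0,
\end{align*}
where the last equality uses $\Fmat_0\Qmat=\Sigmat_0\Amat\Smat_m$ and $\Qmat^T\Fmat_0^T=\Smat_m^T\Amat\Sigmat_0$. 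Comparing with \eref{Eq:SigmTheory} from Theorem~\ref{T:BayesCG} yields $\Fmat_m\Fmat_m^T=\Sigmat_m$, as desired.

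There is no real obstacle here; the only subtle point is to verify that $\Qmat$ has full column rank so that $(\Qmat^T\Qmat)^{-1}$ is well defined, which is exactly the assumption that $\Lammat_m$ is nonsingular. The dimension bookkeeping $m\leq\ell\leq n$ ensures that $\Fmat_m\in\Real^{n\times\ell}$ as claimed. \qed
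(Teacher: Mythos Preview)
Your proof is correct and follows essentially the same approach as the paper: introduce $\Qmat=\Fmat_0^T\Amat\Smat_m$, recognize $\Pmat=\Imat-\Qmat(\Qmat^T\Qmat)^{-1}\Qmat^T$ as an orthogonal projector, and use $\Pmat^2=\Pmat$ to collapse $\Fmat_m\Fmat_m^T$ to $\Fmat_0\Pmat\Fmat_0^T=\Sigmat_m$. If anything, you are slightly more careful than the paper, since you explicitly invoke $\Pmat=\Pmat^T$ when passing from $\Fmat_0\Pmat\Pmat^T\Fmat_0^T$ to $\Fmat_0\Pmat\Fmat_0^T$, and you note why $(\Qmat^T\Qmat)^{-1}$ exists.
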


\begin{proof}
Fix $m$. Substituting $\Sigmat_0 = \Fmat_0\Fmat_0^T$ into \eref{Eq:SigmTheory} and factoring out $\Fmat_0$ on the left and $\Fmat_0^T$ on the right gives
$\Sigmat_m = \Fmat_0\Pmat\Fmat_0^T$
  where 
  \begin{align*}
\Pmat &\equiv \Imat - \Fmat_0^T\Amat\Smat_m(\Smat_m^T\Amat\Fmat_0\Fmat_0^T\Amat\Smat_m)^{-1}\Smat_m\Amat\Fmat_0\\
&=(\Imat - \Qmat(\Qmat^T\Qmat)^{-1}\Qmat^T)
\qquad\text{where}\quad \Qmat \equiv \Fmat_0^T\Amat\Smat_m.
\end{align*}
Show that $\Pmat$ is a projector,
  \begin{align*}
    \Pmat^2 &= \Imat - 2\Qmat(\Qmat^T\Qmat)^{-1}\Qmat^T + \Qmat(\Qmat^T\Qmat)^{-1}\Qmat^T\Qmat(\Qmat^T\Qmat)^{-1}\Qmat^T \\
            &= \Imat - \Qmat(\Qmat^T\Qmat)^{-1}\Qmat^T = \Pmat.
  \end{align*}
Hence $\Sigmat_m = \Fmat_0\Pmat\Fmat_0^T = \Fmat_0\Pmat\Pmat\Fmat_0^T = \Fmat_m\Fmat_m^T$.
   \end{proof}

\begin{algorithm}
\caption{BayesCG with covariances in factored form}
\label{A:BayesCGF}
\begin{algorithmic}[1]
  \State \textbf{Input:} spd $\Amat\in\Rnn$, $\bvec\in\Rn$, $\xvec_0\in\Rn$, $\Fmat_0\in\Real^{n\times \ell}$ \Comment{need $\xvec_*-\xvec_0 \in\range(\Sigmat_0)$}
\State{$  \rvec_0 =  \bvec- \Amat\xvec_0$} 
\State{$ \svec_1 =  \rvec_0$} 
\State{$\Pmat = \zerovec \in \Rnn$}
\State{$m=0$} 
\While{not converged}
\State{$m = m+1$}
\State{$\Pmat(:,m) = \Fmat_0^T\Amat\svec_m$} \Comment{Save column $m$ of $\Pmat$}
\State{$\qvec = \Fmat_0\Pmat(:,m)$} \Comment{Compute $\qvec = \Sigmat_0\Amat\svec_m$}
\State{$\eta_m = \svec_m^T\Amat\qvec$}
\State{$\Pmat(:,m) = \Pmat(:,m)\big/\eta_m$} \Comment{Normalize column $m$ of $\Pmat$}
\State{$\alpha_m = \left( \rvec_{m-1}^T  \rvec_{m-1}\right)\big/\eta_m$}
\State{$ \xvec_m =  \xvec_{m-1} + \alpha_m  \qvec $} 
\State{$ \rvec_m =  \rvec_{m-1} - \alpha_m \Amat\qvec$}
\State{$\beta_m = \left( \rvec_m^T \rvec_m\right)\big/\left( \rvec_{m-1}^T\rvec_{m-1}\right)$}
\State{$ \svec_{m+1} =  \rvec_m+\beta_m  \svec_m$} 
\EndWhile
\State{$\Pmat = \Pmat(:,1:m)$} \Comment{Discard unused columns of $\Pmat$}
\State{$\Fmat_m = \Fmat_0(\Imat- \Pmat\Pmat^T)$}
\State \textbf{Output:} $\xvec_m$, $\Fmat_m$ \Comment{Final posterior}
\end{algorithmic}
\end{algorithm}

\subsection{BayesCG under the Krylov prior}
\label{S:BayesCGK}
We present algorithms for BayesCG under full Krylov posteriors (Section~\ref{s_FK}) and under approximate Krylov posteriors (Section~\ref{s_AK}).

\subsubsection{Full Krylov posteriors}\label{s_FK}

 \aref{A:BayesCGKFull} computes the following:
a matrix $\Vmat$ whose columns are an $\Amat$-orthonormal basis for $\mathcal{K}_g(\Amat,\rvec_0)$;
 the diagonal matrix
$\Phimat$ in \eref{Eq:PhiDef}; and the posterior mean $\xvec_m$ in \eref{Eq:DirectPosterior}.
The output consists of the posterior mean $\xvec_m$, and the factors $\Vmat_{m+1:\kry}$ and $\Phimat_{m+1:\kry}$ for the posterior covariance.

\begin{algorithm}
\caption{BayesCG under the Krylov prior with full posteriors}
\label{A:BayesCGKFull}
\begin{algorithmic}[1]
  \State{\textbf{Inputs}: spd $\Amat\in\Rnn$, $\bvec\in\Rn$, $\xvec_0\in\Rn$, iteration count $m$}
  \State{$  {\rvec}_0 =  {\bvec}- {\Amat\xvec}_0$} \Comment{Initial residual}
  \State{$ {\vvec}_1 =  {\rvec}_0$} \Comment{Initial search direction}
  \State Compute columns of $\Vmat$ with \aref{A:ALanczos}
  \State $\Phimat = \mathrm{diag}((\Vmat^T\rvec_0)^2)$ \Comment{Compute $\Phimat$ with \eref{Eq:PhiDef}}
  \State $\xvec_m = \xvec_0 + \Vmat_{1:m}\Vmat_{1:m}^T\rvec_0$ \Comment{Compute posterior mean with \eref{Eq:DirectPosterior}}
  \State \textbf{Output:} {$\xvec_m$, $\Vmat_{m+1:\kry}$, $\Phimat_{m+1:\kry}$}
\end{algorithmic}
\end{algorithm}

\subsubsection{Approximate Krylov posteriors}\label{s_AK}
\aref{A:BayesCGWithoutBayesCG} computes
rank-$d$ approximate Krylov posteriors in
two main steps: 
(i) posterior mean and iterates
$\xvec_m$ in Lines 5-14; and
(ii) factorization of the posterior covariance $\widehat{\Gammat}_m$ in Lines 16-26.

\begin{algorithm}
\caption{BayesCG under the Krylov prior \cite[Algorithm 3.1]{RICO21}}
\label{A:BayesCGWithoutBayesCG}
\begin{algorithmic}[1]
  \State{\textbf{Inputs}: spd $\Amat\in\Rnn$, $\bvec\in\Rn$, $\xvec_0\in\Rn$, iteration count $m$, posterior rank $d$}
  \State{$  {\rvec}_0 =  {\bvec}- {\Amat\xvec}_0$} \Comment{Initial residual}
  \State{$ {\vvec}_1 =  {\rvec}_0$} \Comment{Initial search direction}
  \State{$i = 0$} \Comment{Initial iteration counter}
  \While{$i < m$} \Comment{CG recursions for posterior means}
  \State{$i = i+1$} \Comment{Increment iteration count}
  \State{$ \eta_i = \vvec_i^T\Amat\vvec_i$}
  \State{$\gamma_i = (\rvec_{i-1}^T\rvec_{i-1})\big/\eta_i$} \Comment{Next step size}
  \State{$ {\xvec}_{i} =  {\xvec}_{i-1} +   \gamma_i \vvec_i $} \Comment{Next iterate}
  \State{$ {\rvec}_{i} =  {\rvec}_{i-1}- \gamma_i \Amat\vvec_i$} \Comment{Next residual}
  \State{$\delta_i = (\rvec_i^T\rvec_i)\big/(\rvec_{i-1}^T\rvec_{i-1})$}
  \State{$\vvec_{i+1} = \rvec_i + \delta_i\vvec_i$} \Comment{Next search direction}
  \EndWhile
  \State{$d = \min\{d,\kry - m\}$} \Comment{Compute full rank posterior if $d > \kry-m$}
  \State{$\Vmat_{m+1:m+d} = \vzero_{n\times d}$} \Comment{Initialize approximate posterior matrices}
  \State{$\Phimat_{m+1:m+d} = \vzero_{d\times d}$}
  \For{$j=m+1:m+d$} \Comment{$d$ additional iterations for posterior covariance}
  \State{$\eta_{j} = \vvec_j^T\Amat\vvec_j$}
  \State{$\gamma_j = (\rvec_{j-1}^T\rvec_{j-1})\big/\eta_j$}
  \State{$\Vmat(:,j) = \vvec_{j}\big/\sqrt{\eta_j}$} \Comment{Next column of $\Vmat_{m+1,m+d}$}
  \State{$\Phimat(j,j) = \gamma_{j} \|\rvec_{j-1}\|_2^2$} \Comment{Next diagonal element of $\Phimat_{m+1,m+d}$}  
  \State{$\rvec_j = \rvec_{j-1} - \gamma_j \Amat\vvec_j$}
  \State{$\delta_j = (\rvec_j^T\rvec_j)\big/(\rvec_{j-1}^T\rvec_{j-1})$}
  \State{$\vvec_{j+1} = \rvec_j + \delta_j\vvec_j$}
  \Comment{Next un-normalized column  of $\Vmat_{m+1,m+d}$}
  \EndFor
  \State \textbf{Output:} {$\xvec_m$, $\Vmat_{m+1:m+d}$, $\Phimat_{m+1:m+d}$}
\end{algorithmic}
\end{algorithm}

\bibliography{BCGSources}

\begin{thebibliography}{10}

\bibitem{bcsstk14}
{BCSSTK14: BCS Structural Engineering Matrices (linear equations) Roof of the
  Omni Coliseum, Atlanta}.

\bibitem{Bartels}
S.~Bartels, J.~Cockayne, I.~C.~F. Ipsen, and P.~Hennig.
\newblock Probabilistic linear solvers: a unifying view.
\newblock {\em Stat. Comput.}, 29(6):1249--1263, 2019.

\bibitem{Berger1985}
J.~O. Berger.
\newblock {\em Statistical Decision Theory and {B}ayesian Analysis}.
\newblock Springer New York, 1985.

\bibitem{Berljafa}
M.~Berljafa and S.~G\"{u}ttel.
\newblock Generalized rational {K}rylov decompositions with an application to
  rational approximation.
\newblock {\em SIAM J. Matrix Anal. Appl.}, 36(2):894--916, 2015.

\bibitem{CM09}
S.~L. Campbell and C.~D. Meyer.
\newblock {\em Generalized inverses of linear transformations}, volume~56 of
  {\em Classics in Applied Mathematics}.
\newblock Society for Industrial and Applied Mathematics (SIAM), Philadelphia,
  PA, 2009.

\bibitem{CGOS21}
J.~Cockayne, M.~M. Graham, C.~J. Oates, and T.~J. Sullivan.
\newblock Testing whether a learning procedure is calibrated, 2021.
\newblock arXiv:2012.12670.

\bibitem{CIOR20}
J.~Cockayne, I.~C.~F. Ipsen, C.~J. Oates, and T.~W. Reid.
\newblock Probabilistic iterative methods for linear systems.
\newblock {\em J. Mach. Learn. Res.}, 22 (232):1--34, 2021.

\bibitem{Cockayne:BCG}
J.~Cockayne, C.~J. Oates, I.~C.~F. Ipsen, and M.~Girolami.
\newblock A {B}ayesian conjugate gradient method (with discussion).
\newblock {\em Bayesian Anal.}, 14(3):937--1012, 2019.
\newblock Includes 6 discussions and a rejoinder from the authors.

\bibitem{BCG:Supp}
J.~Cockayne, C.~J. Oates, I.~C.~F. Ipsen, and M.~Girolami.
\newblock Supplementary material for `{A} {B}ayesian conjugate-gradient
  method'.
\newblock {\em Bayesian Anal.}, 2019.

\bibitem{Cockayne:BPNM}
J.~Cockayne, C.~J. Oates, T.~J. Sullivan, and M.~Girolami.
\newblock Bayesian probabilistic numerical methods.
\newblock {\em SIAM Rev.}, 61(4):756--789, 2019.

\bibitem{Fanaskov21}
Vladimir Fanaskov.
\newblock Uncertainty calibration for probabilistic projection methods.
\newblock {\em Stat. Comput.}, 31(5):Paper No. 56, 17, 2021.

\bibitem{Gelbrich90}
M.~Gelbrich.
\newblock On a formula for the {$L^2$} {W}asserstein metric between measures on
  {E}uclidean and {H}ilbert spaces.
\newblock {\em Math. Nachr.}, 147:185--203, 1990.

\bibitem{Giraud:CGS2Exp}
L.~Giraud, J.~Langou, and M.~Rozloznik.
\newblock The loss of orthogonality in the {G}ram-{S}chmidt orthogonalization
  process.
\newblock {\em Comput. Math. Appl.}, 50(7):1069--1075, 2005.

\bibitem{Giraud:CGS2Theory}
Luc Giraud, Julien Langou, Miroslav Rozlo\v{z}n\'{\i}k, and Jasper van~den
  Eshof.
\newblock Rounding error analysis of the classical {G}ram-{S}chmidt
  orthogonalization process.
\newblock {\em Numer. Math.}, 101(1):87--100, 2005.

\bibitem{GoVa13}
Gene~H. Golub and Charles~F. Van~Loan.
\newblock {\em Matrix Computations}.
\newblock The Johns Hopkins University Press, Baltimore, 4th edition, 2013.

\bibitem{HBH20}
J.~Hart, B.~{van Bloemen Waanders}, and R.~Herzog.
\newblock Hyperdifferential sensitivity analysis of uncertain parameters in
  {PDE}-constrained optimization.
\newblock {\em Int. J. for Uncertain. Quantif.}, 10(3):225--248, 2020.

\bibitem{Hennig}
P.~Hennig.
\newblock Probabilistic interpretation of linear solvers.
\newblock {\em SIAM J. Optim.}, 25(1):234--260, 2015.

\bibitem{HOG15}
P.~Hennig, M.~A. Osborne, and M.~Girolami.
\newblock Probabilistic numerics and uncertainty in computations.
\newblock {\em Proc. R. Soc. A.}, 471(2179):20150142, 17, 2015.

\bibitem{Hestenes}
Magnus~R. Hestenes and Eduard Stiefel.
\newblock Methods of conjugate gradients for solving linear systems.
\newblock {\em J. Research Nat. Bur. Standards}, 49:409--436, 1952.

\bibitem{Higham08}
N.~J. Higham.
\newblock {\em Functions of matrices. {Theory} and computation}.
\newblock Society for Industrial and Applied Mathematics (SIAM), Philadelphia,
  PA, 2008.

\bibitem{HornJohnson85}
R.~A. Horn and C.~R. Johnson.
\newblock {\em Matrix Analysis}.
\newblock Cambridge University Press, 1985.

\bibitem{JWHT21}
G.~James, D.~Witten, T.~Hastie, and R.~Tibshirani.
\newblock {\em An introduction to statistical learning}, volume 112.
\newblock Springer, second edition, 2021.

\bibitem{Kaltenbach12}
H.-M. Kaltenbach.
\newblock {\em A concise guide to statistics}.
\newblock Springer Briefs in Statistics. Springer, Heidelberg, 2012.

\bibitem{KLMU20}
D.~Kressner, J.~Latz, S.~Massei, and E.~Ullmann.
\newblock Certified and fast computations with shallow covariance kernels,
  2020.
\newblock arXiv:200109187.

\bibitem{Liesen}
J.~Liesen and Z.~Strakos.
\newblock {\em Krylov Subspace Methods: Principles and Analysis}.
\newblock Oxford University Press, 2013.

\bibitem{Mathai}
A.~M. Mathai and S.~B. Provost.
\newblock {\em Quadratic forms in random variables: {Theory} and applications}.
\newblock Dekker, 1992.

\bibitem{Muirhead}
R.~J. Muirhead.
\newblock {\em Aspects of multivariate statistical theory}.
\newblock John Wiley \& Sons, Inc., New York, 1982.
\newblock Wiley Series in Probability and Mathematical Statistics.

\bibitem{NW06}
J.~Nocedal and S.~J. Wright.
\newblock {\em Numerical optimization}.
\newblock Springer Series in Operations Research and Financial Engineering.
  Springer, New York, second edition, 2006.

\bibitem{Oates}
C.~J. Oates and T.~J. Sullivan.
\newblock A modern retrospective on probabilistic numerics.
\newblock {\em Stat. Comput.}, 29(6):1335--1351, 2019.

\bibitem{Ouellette}
D.~V. Ouellette.
\newblock Schur complements and statistics.
\newblock {\em Linear Algebra Appl.}, 36:187--295, 1981.

\bibitem{PZSHG12}
N.~Petra, H.~Zhu, G.~Stadler, T.J.R. Hughes, and O.~Ghattas.
\newblock An inexact {G}auss-{N}ewton method for inversion of basal sliding and
  rheology parameters in a nonlinear {S}tokes ice sheet model.
\newblock {\em J. Glaciology}, 58(211):889–903, 2012.

\bibitem{RICO21}
T.~W. Reid, I.~C.~F. Ipsen, J.~Cockayne, and C.~J. Oates.
\newblock {BayesCG} as an uncertainty aware version of {CG}, 2022.

\bibitem{Ross07}
S.~M. Ross.
\newblock {\em Introduction to probability models}.
\newblock Academic Press, Inc., Boston, MA, ninth edition, 2007.

\bibitem{Saad}
Y.~Saad.
\newblock {\em Iterative Methods for Sparse Linear Systems}.
\newblock SIAM, 2003.

\bibitem{SHB21}
A.~K. Saibaba, J.~Hart, and B.~{van Bloemen Waanders}.
\newblock Randomized algorithms for generalized singular value decomposition
  with application to sensitivity analysis.
\newblock {\em Numer. Linear Algebra Appl.}, page e2364, 2021.

\bibitem{StrakosTichy}
Z.~Strako\v{s} and P.~Tich\'{y}.
\newblock On error estimation in the conjugate gradient method and why it works
  in finite precision computations.
\newblock {\em Electron. Trans. Numer. Anal.}, 13:56--80, 2002.

\bibitem{Stuart:BayesInverse}
A.~M. Stuart.
\newblock Inverse problems: a {B}ayesian perspective.
\newblock {\em Acta Numer.}, 19:451--559, 2010.

\bibitem{Villani09}
C.~Villani.
\newblock {\em Optimal Transport, Old and New}, volume 338 of {\em Grundlehren
  der Mathematischen Wissenschaften [Fundamental Principles of Mathematical
  Sciences]}.
\newblock Springer-Verlag, Berlin, 2009.

\bibitem{WH20}
J.~Wenger and P.~Hennig.
\newblock Probabilistic linear solvers for machine learning.
\newblock In H.~Larochelle, M.~Ranzato, R.~Hadsell, M.F. Balcan, and H.~Lin,
  editors, {\em Advances in Neural Information Processing Systems}, volume~33,
  pages 6731--6742. Curran Associates, Inc., 2020.

\end{thebibliography}

\end{document}